\newtheorem{theorem}{Theorem}
\newcounter{letter}
\newtheorem{maintheorem}[letter]{Theorem}
\newtheorem{proposition}[theorem]{Proposition}
\newtheorem{example}[theorem]{Example}
\newtheorem{definition}[theorem]{Definition}
\newtheorem{corollary}[theorem]{Corollary}
\newtheorem{lemma}[theorem]{Lemma}
\newtheorem{remark}[theorem]{Remark}
\newtheorem{conjecture}[theorem]{Conjecture}
\def\eps{\varepsilon}
\def\R{{\mathbb R}}
\def\T{{\mathbb T}}
\def\N{{\mathbb N}}
\def\P{{\mathbb P}}
\def\O{{\mathcal O}}
\def\E{{\mathbb E}}
\def\S{{\mathbb S}}
\def\B{{\mathcal B}}
\def\A{{\mathcal A}}
\def\D{{\mathcal D}}
\def\orb{\mathop{\rm orb}\nolimits}
\def\fullorb{\mathop{\rm fullorb}\nolimits}
\def\diam{\mathop{\rm diam}\nolimits}
\def\dist{\mathop{\rm dist}\nolimits}
\def\bas{\mathop{\rm Bas}\nolimits}
\def\supp{\mathop{\rm supp}\nolimits}
\def\Crit{\mathop{\rm Crit}\nolimits}
\def\Feig{\mathop{\rm Feig}\nolimits}
\def\ie{{\em i.e., }}
\def\eg{{\em e.g.~}}
\def\fp{\,\mathfrak{fp}}
\renewcommand{\a}{g}
\def\Inte{\mathop{\rm Int}\nolimits}
\def\Cl{\mathop{\rm Cl}\nolimits}
\def\Dis{\mathop{\rm Dis}\nolimits}
\def\LY{\mathop{\rm LY}\nolimits}
\def\Prox{\mathop{\rm Prox}\nolimits}
\def\Asymp{\mathop{\rm Asymp}\nolimits}
\def\AsPer{\mathop{\rm AsPer}\nolimits}
\def\cyc{\mathop{\rm cyc}\nolimits}
\def\CIInf{C^2_{\text{\rm nf}}}
\def\CIIInf{C^3_{\text{\rm nf}}}
\begin{document}

\bibliographystyle{plain}
\title[version of \today]
{On the Lebesgue measure of Li-Yorke pairs for interval maps}
\author{Henk Bruin
and V\'{\i}ctor Jim\'enez L\'opez}
\thanks{HB gratefully acknowledges the support of EPSRC
grant EP/F037112/1 and also the hospitality of the University of Murcia and
Delft University of Technology.
\\ \indent
 VJL was partially supported by MICINN
(Ministerio de Ciencia e Innovacion) and FEDER (Fondo Europeo de
Desarrollo Regional), grant MTM2008-03679/MTM, and Fundaci\'on
S\'eneca (Agencia de Ciencia y Tecnolog\'{\i}a de la Regi\'on de
Murcia, Programa de Generaci\'on de Conocimiento Cient\'{\i}fico
de Excelencia, II PCTRM 2007-10), grant 08667/PI/08.}

\subjclass[2000]{Primary: 37E05, Secondary: 26A18, 37B20,  37C70}
\keywords{Li-Yorke pair, Li-Yorke chaos, scrambled set, interval
map, distal pair, asymptotic pair, attractor, strongly wandering
set}
\date{\today}

\begin{abstract}
We investigate the prevalence of Li-Yorke pairs
for $C^2$ and $C^3$ multimodal maps $f$ with non-flat critical points. We
show that every measurable scrambled set has zero Lebesgue
measure and that all strongly wandering sets have
zero Lebesgue measure, as does the set of pairs of asymptotic (but not
asymptotically periodic) points.

If $f$ is topologically mixing and has no Cantor attractor, then
typical (w.r.t.\ two-dimen\-sional Lebesgue measure) pairs are
Li-Yorke; if additionally $f$ admits an absolutely continuous
invariant probability measure (acip), then typical pairs have a
dense orbit for $f \times f$.
These results make use of so-called nice neighborhoods of the
critical set of general multimodal maps, and hence uniformly expanding
Markov induced maps, the existence of either is proved in this paper as well.

For the setting where $f$ has a Cantor attractor, we present a
trichotomy explaining when the set of Li-Yorke pairs and distal
pairs have positive two-dimensional Lebesgue measure.
\end{abstract}

\maketitle

\section{Introduction}\label{sec:intro}

In interval dynamics there are many ways to deal with the notion
of asymptotic complexity (``chaos''). Probably it is pointless to
try and decide which is the best of them, but in applications
there are two of them which are by far the most popular, see \eg
\cite{Day, MMN}. One is topological chaos, that is, the existence
of an uncountable scrambled set in the sense of the famous Li and
Yorke paper \cite{LY}. The other one is ergodic chaos, that is,
the existence of an invariant probability measure absolutely
continuous with respect to the Lebesgue measure (acip). Neither of
them is without drawbacks. (To keep this introduction at
the expository level, we have deferred most of definitions to the
subsequent sections.)

There are easy conditions implying  the existence of Li-Yorke
chaos and its stability under small perturbations. One such
condition is the existence of a periodic orbit of period not a
power of two \cite{Blo}. Nevertheless, this chaos need not be
``observable'': for instance, the orbits  of almost all points
(in the sense of Lebesgue measure) may be attracted by
this periodic orbit \cite{Guc}.

On the other hand, ergodic chaos ensures complicated dynamics for
a set of points with positive measure. For instance, if a smooth
multimodal map (with non-flat critical points) $f$ has an acip,
then, as a simple consequence of the zero measure of Cantor metric
attractors \cite{vSV} and Proposition~\ref{propAttr} below, there
is a positive measure set of points whose orbit under $f$ is dense
in some interval. However, the converse is not true, even for the
family of logistic maps \cite{Joh,Lyu}, so acips can only exist
under additional conditions (for instance, hyperbolic repelling
periodic points and $|Df^n(f(c))|\to \infty$ for any critical
point $c$ of the map $f$ \cite{BRSS}).

A map $f$ is infinitely renormalizable if there is an infinite
collection of nested cycles of periodic intervals; the
intersection of these cycles is a Cantor set, called a solenoidal
attractor (because the suspension over this attractor is a
topological solenoid). The Feigenbaum map, or more correctly
Coullet-Tresser-Feigenbaum, (see \eg \cite[pp.
151-152]{dMvS}) is the best known example of this. A solenoidal
attractor is Lyapunov stable and as shown in \cite{BrJi} (see
Proposition~\ref{PropApproxPer} below), points in the basin of
such an attractor are approximately periodic:

\begin{definition}\label{DefApproxPer}
A point $x$ is \emph{approximately periodic} if for
every $\eps>0$ there is a periodic point $p$ such that $\limsup_{n
\to\infty} |f^n(x)-f^n(p)| < \eps$.
\end{definition}

Hence, up to small
errors, almost all points eventually behave as periodic points.

\begin{remark}
An intrinsic characterization of adding machines (namely any
system in which every point is regularly recurrent) is presented
in \cite{BK}. In our setting, it applies to the solenoidal
attractor itself, whereas approximate periodicity gives
information on a neighborhood of the solenoidal attractor.
\end{remark}

However, a multimodal (even polynomial) map may have a dense orbit
while, simultaneously, almost all orbits are attracted by a Cantor
set (a so-called wild attractor) \cite{BKNS}. As we will explain
below, in such a case it is still possible but not necessary that
a.e.\ point is approximately periodic.

We see that there is a variety of smooth multimodal maps featuring
a certain degree of ``observable'' dynamical complexity which is,
however, not strong enough to be realized by an acip. It is natural to
return to the Li-Yorke notion of chaos and investigate to
what extent it can be used to measure this complexity. This is
what we intend in the present paper.

\begin{definition} \label{def:basic}
Let $f:I =[0,1]\to I$ be a continuous map. A pair of points
$(x,y)$ is called:
\begin{itemize}
\item \emph{distal} if $\liminf_{n\to\infty} |f^n(y)-f^n(x)| > 0$;\\
\item \emph{asymptotic} if $\lim_{n\to\infty} |f^n(y)-f^n(x)| = 0$;\\
\item \emph{Li-Yorke} if it is neither asymptotic nor distal, that is,
$$
0 = \liminf_{n\to\infty} |f^n(y)- f^n(x)| < \limsup_{n\to\infty}
|f^n(y)- f^n(x)|.
$$
\end{itemize}
We denote the set of distal, asymptotic and Li-Yorke pairs by
$\Dis$, $\Asymp$ and $\LY$, respectively.
A pair that is not distal (hence asymptotic or Li-Yorke)
is called {\em proximal}. The set of Li-Yorke
pairs with $\limsup_{n\to\infty} |f^n(y)- f^n(x)| \geq \eps$ is
denoted as $\LY_\eps$.
\end{definition}

Note that $f^n(x) \neq f^n(y)$ for all $n \geq 0$ whenever $(x,y)$
is a Li-Yorke or distal pair.

\begin{definition}
Let $f:I \to I$ be a continuous map.
\begin{itemize}
 \item A set $S \subset X$ is called {\em scrambled} if any two
distinct points in $S$ form a Li-Yorke pair.
 \item The map $f$ is called \emph{chaotic (in the sense of Li-Yorke)} if it has
an uncountable scrambled set.
\end{itemize}

The above definitions can be strengthened by assuming that there
is a positive lower bound of $\limsup |f^n(x) - f^n(y)|$
independently of $x,y$:
\begin{itemize}
 \item If there is $\eps > 0$ such that for every $x \neq y \in S$,
$\limsup |f^n(x) - f^n(y)| \geq \eps$ independent of $x,y \in
S$, then $S$ is called {\em $\eps$-scrambled}.
 \item If there is $\eps > 0$ such that for every $x \in X$
and every neighborhood $U \owns x$, there is $y \in U$ such that
$(x,y) \in \LY_\eps$, then $f$ is {\em Li-Yorke sensitive}.
\end{itemize}
\end{definition}

We emphasize that $\eps$-scrambled is a stronger property than
just scrambled. For example, \cite[Proposition 5]{BHS} shows the
possibility of having scrambled sets that are not $\eps$-scrambled
for any $\eps > 0$.

Our concrete aim is to investigate the Lebesgue measure of the
above sets for $C^2$ (or sometimes $C^3$) multimodal maps from the
interval $I$ into itself with non-flat critical points (denoted by
$\CIInf(I)$ and $\CIIInf(I)$, respectively). The advantage of
working in this setting is that there are many tools at hand to
deal with measure-theoretic properties. Remarkably the most
important of these tools is purely topological: maps from
$\CIInf(I)$ have no wandering intervals, see \cite[Theorem A, p.
267]{dMvS}.

As it happens, some of the results in the paper are based on a
generalization of this property which is of interest in itself.

\begin{definition}\label{def:wandering}
A point $x$ is \emph{asymptotically periodic}, written $x\in \AsPer$,
if there is a periodic point $p$ such that
$\lim_{n\to \infty} |f^n(x)-f^n(p)|=0$.
A measurable set $W$ is \emph{strongly wandering} if
$f^n(W) \cap f^m(W) = \emptyset$ for all $n > m \geq 0$ and $W$
contains no asymptotically periodic points.
A \emph{wandering interval} is just an interval which is strongly wandering.
\end{definition}

The notion of strongly wandering set was introduced by Blokh and
Lyubich in \cite{BL} (in a slightly different way). Under the
assumption of negative Schwarzian derivative, the non-existence of
strongly wandering sets of positive measure was proved in the
unimodal case (and stated in the multimodal case without
inflection points) in \cite{BL}. Here we prove it for maps from
$\CIInf(I)$, for which inflection points are now allowed.

\begin{maintheorem} \label{mainthm:Strong}
 Let $f\in \CIInf(I)$. Then every strongly wandering set
 has zero Lebesgue measure.
\end{maintheorem}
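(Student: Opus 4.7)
The plan is a proof by contradiction. Assume $W$ is a strongly wandering set with $\Leb(W)>0$, and pick a Lebesgue density point $x_0\in W$; since $W\cap \AsPer=\emptyset$, $x_0$ is not asymptotically periodic, and in particular not attracted to any periodic attractor. The contradiction I aim for is the following quantitative dichotomy: the pairwise disjointness of the iterates $\{f^n(W)\}_{n\ge 0}$ inside $I$ forces $\sum_n\Leb(f^n(W))\le 1$, so it suffices to exhibit times $n_k\to\infty$ and a nested sequence of intervals $J_k\ni x_0$ shrinking to $x_0$ with $\Leb(f^{n_k}(J_k\cap W))$ bounded below by a positive constant independent of $k$.

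To produce such branches I would invoke the nice neighborhoods of $\Crit(f)$ whose construction for $\CIInf(I)$ is carried out elsewhere in the paper. Fix a nested pair $\mathcal{V}\Subset\mathcal{V}'$ of nice neighborhoods providing a definite Koebe buffer. The argument then splits according to $\omega(x_0)$. If $\omega(x_0)\cap\Crit(f)\ne\emptyset$---this covers the cases where $x_0$ is attracted to a wild or solenoidal Cantor attractor---then the orbit enters $\mathcal{V}$ at an infinite sequence of times $n_k$; pulling back along the orbit the component $V_k$ of $\mathcal{V}'$ containing $f^{n_k}(x_0)$, the nice property ensures that the resulting pullback components are organized coherently, and the Koebe principle in $\mathcal{V}'$ (using the buffer $\mathcal{V}'\setminus\mathcal{V}$) gives a diffeomorphic branch $J_k$ of $f^{n_k}$ with uniformly bounded distortion, with $f^{n_k}(J_k)=V_k$ of length bounded below by $\min_j|V_j'|$, and with $x_0$ sitting in $J_k$ with bounded eccentricity (because $f^{n_k}(x_0)\in\mathcal{V}$ is a definite distance from $\partial V_k$). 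If instead $\omega(x_0)\cap\Crit(f)=\emptyset$, then the orbit eventually lies in a compact set disjoint from $\Crit(f)$ and from every attracting periodic orbit, so the uniformly expanding Markov induced map furnished by the paper supplies branches $J_k\ni x_0$ enjoying the same properties.

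With those $J_k$ in hand the endgame is routine. Since $J_k$ shrinks to a density point of $W$ with bounded eccentricity, $\Leb(J_k\cap W)/\Leb(J_k)\to 1$; bounded distortion of $f^{n_k}|_{J_k}$ transports this to
\[
\frac{\Leb(f^{n_k}(J_k\cap W))}{\Leb(f^{n_k}(J_k))}\longrightarrow 1,
\]
and the lower bound $\delta>0$ on $\Leb(f^{n_k}(J_k))$ then yields $\Leb(f^{n_k}(J_k\cap W))\ge\delta/2$ for all sufficiently large $k$. But $f^{n_k}(J_k\cap W)\subseteq f^{n_k}(W)$ and the latter are pairwise disjoint subsets of $I$, so $\sum_k\Leb(f^{n_k}(J_k\cap W))\le 1$, contradicting the uniform lower bound $\delta/2$.

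The main obstacle is clearly the construction of the diffeomorphic pullback branches $J_k$ with the required uniform bounds, which must work for \emph{every} non-asymptotically-periodic point irrespective of the nature of its $\omega$-limit. This is precisely the technical content of the nice-neighborhood and induced-Markov-map machinery that the paper develops in parallel; once that machinery is available, only the Lebesgue density theorem, Koebe distortion, and a finite-volume summation argument remain, and these combine cleanly to close the proof.
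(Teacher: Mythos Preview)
Your endgame---deriving a volume contradiction from $\sum_k \Leb(f^{n_k}(W)) \le 1$ against a uniform lower bound on each term---is clean and would work if the diffeomorphic branches $J_k$ existed as claimed. The gap is precisely in their construction. You assert that pulling back the component $V_k \subset \mathcal{V}'$ along $x_0,\ldots,f^{n_k}(x_0)$ produces a \emph{diffeomorphic} branch of $f^{n_k}$ with uniformly bounded distortion; this fails for $k\ge 2$. Once the orbit has entered $\mathcal{V}$ at an earlier time $n_j<n_k$, the pullback of $V_k$ to level $n_j$ sits inside the critical neighborhood $U_{c_j}$, and nothing prevents it from containing $c_j$ (this happens whenever $f^{n_j}(x_0)$ lies in the central return domain). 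The order of the pullback chain is therefore unbounded in $k$, and neither niceness nor the Koebe principle gives distortion control through unboundedly many critical passages. The paper's induced Markov map (Theorem~\ref{thm:induced}) \emph{would} furnish such branches, but it requires $C^3$ and applies only to points in the basin of interval attractors; Theorem~\ref{mainthm:Strong} is a $C^2$ statement covering all attractor types, and its proof cannot invoke Theorem~\ref{thm:induced}.

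The paper avoids the need for infinitely many branches altogether. Part~(iii) of Theorem~\ref{thm:lemma11} supplies in each $U_c$ a subinterval $W_c$ of definite relative size with $\partial W_c\cap D(U)=\emptyset$ (non-Feigenbaum case), so that $f(W_c)$ decomposes into full first-entry domains and components of $U$. Starting from a single first-entry branch and iterating this step---each step a bounded-distortion diffeomorphism, costing a controlled factor in density---one lands in components $U_{c_1},U_{c_2},\ldots$; after at most $\#\Crit+1$ steps two of the $c_i$ coincide by pigeonhole, yielding two specific iterates $f^{t}(W)$ and $f^{t'}(W)$ each of density $>1/2$ in the same $U_c$, hence intersecting. (The Feigenbaum case is handled separately via $W_c\subset f^{k_c}(W_c)$.) The contradiction is directly with the strongly-wandering hypothesis, not with finite total measure, and only a bounded number of diffeomorphic first-entry branches---each supplied by Theorem~\ref{thm:lemma11}(ii)---is ever required.
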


Concerning the size of Li-Yorke chaos, the first natural question
is whether smooth multimodal maps may have scrambled sets of
positive Lebesgue measure. There is extensive literature on the
subject. Examples of continuous maps possessing scrambled sets of
positive or even full measure are well known: \cite{Kan, Smi2,
Mis, BrHu}. In fact, if $f$ is chaotic (respectively, has a dense
orbit), then it is topologically conjugate to a map having a
positive (respectively, full) measure scrambled set \cite{JaSm,
Smi3} (respectively, \cite{SmSt}).

Of course, it is not possible that the whole interval is scrambled
(in fact, a scrambled set cannot be residual on any subinterval of
$I$, see \cite{Ged}), although there are maps  with scrambled
forward invariant Cantor sets \cite{HuYe}. However such maps
cannot be multimodal (see Proposition~\ref{prop:noinvscramb}).

It is worth emphasizing that maps of type $2^\infty$ in the
Sharkovskiy ordering (that is, those having periodic points of
periods all powers of 2, but no other periods) can possess
scrambled sets of positive measure, but not of full measure
because any Li-Yorke chaotic map of type $2^\infty$ has a wandering
interval. Indeed, if a map of type $2^\infty$ has no wandering
intervals, then all points are approximately periodic \cite{Smi3}.
However the following result is well known (see \eg \cite[p.
144]{BlCo}):

\begin{proposition}\label{prop:approximately}
If $f:I\to I$ is continuous and $x,y$ are approximately periodic
 points, then $(x,y)$ is either asymptotic or distal.
\end{proposition}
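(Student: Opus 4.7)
The plan is to argue by contradiction: assume $(x,y)$ is a Li-Yorke pair, so $\liminf_{n\to\infty}|f^n(x)-f^n(y)|=0$ while $\limsup_{n\to\infty}|f^n(x)-f^n(y)|=c>0$. Fix $\varepsilon>0$ small compared to $c$, and use approximate periodicity of $x$ and $y$ to choose periodic points $p,q$ with $\limsup_n|f^n(x)-f^n(p)|<\varepsilon$ and $\limsup_n|f^n(y)-f^n(q)|<\varepsilon$. Set $a_n=|f^n(x)-f^n(y)|$ and $b_n=|f^n(p)-f^n(q)|$; the triangle inequality gives $\limsup_n|a_n-b_n|<2\varepsilon$.

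The next step is the basic dichotomy for pairs of periodic points. Since $p,q$ are periodic with periods $P$ and $Q$, the sequence $b_n$ is periodic with period dividing $L=\operatorname{lcm}(P,Q)$. If $b_{n_0}=0$ for some $n_0$, then $f^{n_0}(p)=f^{n_0}(q)$, and iterating forward by a multiple of $L$ exceeding $n_0$ (which fixes both $p$ and $q$) forces $p=q$. So either $p=q$, giving $b_n\equiv 0$ and $\limsup a_n<2\varepsilon$, which already contradicts $\limsup a_n=c$ once $\varepsilon<c/2$; or $p\neq q$ and $b_n\geq d:=\min_n b_n>0$ for all $n$. In the latter case $\liminf a_n\geq d-2\varepsilon$ forces $d\leq 2\varepsilon$, but this alone does not control $M:=\max_n b_n$, which must still satisfy $M\geq c-2\varepsilon$.

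The main obstacle is precisely this residual case $p\neq q$ with $d$ small but $M$ large. One might hope to bound $M$ uniformly in $d$ — since $b_n$ over one period is determined by iterating $f$ from the close pair $(f^{n_0}(p),f^{n_0}(q))$ — but such a modulus-of-continuity argument breaks down when the period $L=L(\varepsilon)$ grows unboundedly as $\varepsilon\to 0$, which is precisely what happens in the infinitely renormalizable Feigenbaum setting, where the approximating periodic orbits have periods $2^{k(\varepsilon)}\to\infty$. To close this gap I would appeal to the structure of the $\omega$-limit sets. Approximate periodicity implies that $x$ and $y$ are uniformly recurrent, so $\omega(x)$ and $\omega(y)$ are minimal sets; the classical structure theorem for minimal sets of continuous interval maps says that each is either a periodic orbit or a Cantor set on which $f$ acts as an equicontinuous adding machine. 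If $\omega(x)\neq\omega(y)$ these are disjoint closed subsets of $I$, so $\liminf a_n\geq\dist(\omega(x),\omega(y))>0$, a contradiction. If instead $\omega(x)=\omega(y)$, the common limit set is equicontinuous and hence contains no Li-Yorke pairs; using that $f^n(x)$ and $f^n(y)$ both shadow this minimal set, equicontinuity lifts the asymptotic/distal dichotomy from the limit set back to $(x,y)$, yielding $\limsup a_n=0$ and the desired contradiction.
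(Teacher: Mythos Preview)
The paper does not prove this proposition; it cites Block--Coppel \cite{BlCo} and moves on. So your argument must stand on its own.

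Your elementary first half is correct, including the diagnosis that the residual case ($p\neq q$, $d$ small but $M$ large, period $L(\varepsilon)\to\infty$) cannot be closed by a naive modulus-of-continuity argument. The problem is in how you close it. The assertion that ``the classical structure theorem for minimal sets of continuous interval maps says that each is either a periodic orbit or a Cantor set on which $f$ acts as an equicontinuous adding machine'' is \emph{false}. Minimal Cantor sets for continuous interval maps need not be equicontinuous: Section~\ref{sec:attractors} of this very paper exhibits unimodal maps whose critical omega-limit set $\omega(c)$ is minimal yet supports a weakly mixing invariant measure (the case $Q(k)=\max\{k-5,0\}$, see Proposition~\ref{propLY}), which is as far from an adding machine as one can get. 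You therefore cannot deduce equicontinuity of $f|_{\omega(x)}$ from minimality alone.

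What \emph{is} true is that approximate periodicity of $x$ directly forces $(\omega(x),f)$ to be conjugate to an adding machine or a periodic orbit --- this is Proposition~\ref{PropApproxPer}, and invoking it (rather than a nonexistent structure theorem for arbitrary minimal sets) repairs your outline. Your final step, ``equicontinuity lifts the dichotomy from $\omega(x)$ back to $(x,y)$,'' also needs more than a sentence: equicontinuity holds only on $\omega(x)$, not on a neighborhood, and strange adding machine attractors are not Lyapunov stable. One way to make it rigorous is to first construct target points $t_x,t_y\in\omega(x)$ with $(x,t_x)$ and $(y,t_y)$ asymptotic (the approximating periodic points $p_\varepsilon$ for $x$ form a Cauchy sequence whose limit is $t_x$, and one checks $\sup_n|f^n(t_x)-f^n(p_\varepsilon)|\le 2\varepsilon$), thereby reducing to a pair inside the equicontinuous system $\omega(x)$, where proximal trivially implies asymptotic.
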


\noindent Hence a scrambled set can contain at most one
approximately periodic point. Finally, positive measure scrambled
sets may also exist for $C^\infty$ maps (with flat critical
points) or $C^1$ maps with non-flat critical points, but a $C^1$
map cannot have a full measure scrambled set \cite{Jim1, Jim2,
BaJi2}.

Nevertheless, it is a widely held view that
these are rather pathological examples. For instance, it is known
that neither maps from $\CIInf(I)$ with hyperbolic periodic points
and whose critical points satisfy the Misiurewicz condition, nor
maps in $\CIIInf(I)$ with negative Schwarzian derivative and
having no wild attractors, may possess scrambled sets of positive
measure \cite{BaJi1, BrJi}. Our next result confirms these expectations:

\begin{maintheorem} \label{mainthmC2}
 If $f\in \CIIInf(I)$, then it has
 no measurable scrambled sets of positive Lebesgue measure.
\end{maintheorem}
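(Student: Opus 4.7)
The plan is a proof by contradiction. Suppose $S\subset I$ is a measurable scrambled set with $|S|>0$.

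The first reduction uses Proposition~\ref{prop:approximately}: two approximately periodic points form a distal or asymptotic pair, so $S$ contains at most one approximately periodic point, which I discard to obtain a positive-measure scrambled set with no approximately periodic points. By Proposition~\ref{PropApproxPer}, no $x\in S$ is asymptotically periodic, and no $x\in S$ lies in the basin of a solenoidal Cantor attractor; the dynamics on $S$ is then governed by either an acip or a wild Cantor attractor, the latter to be handled separately via the trichotomy flagged in the abstract.

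The main engine is the technology developed earlier in the paper: a nice neighborhood $\nice$ of the critical set and an associated uniformly expanding Markov first-return map $F\colon\nice\to\nice$ with bounded distortion on each branch. After passing to a positive-measure subset $\tilde S\subset S\cap\nice$ whose points return to $\nice$ infinitely often, fix a Lebesgue density point $x_0$ of $\tilde S$ and let $U_k\ni x_0$ denote the depth-$k$ Markov cylinders for $F$. Uniform expansion yields $|U_k|\to 0$ and $F^k(U_k)=V_k$ with $|V_k|\geq\delta>0$ and distortion at most a uniform constant $K$; bounded distortion combined with the density of $\tilde S$ at $x_0$ then forces
\[
\frac{|F^k(\tilde S\cap U_k)|}{|V_k|}\longrightarrow 1\qquad\text{as }k\to\infty.
\]
Performing the same construction at a second density point $y_0\in\tilde S$, with cylinders $U_k'$ and images $V_k'$, I arrange (by a suitable choice of density points and Markov depths) that $V_k$ and $V_k'$ eventually share a common sub-interval of length $\geq\delta/4$. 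Then $F^k(\tilde S\cap U_k)\cap F^k(\tilde S\cap U_k')$ has positive measure, yielding distinct points $x\in\tilde S\cap U_k$ and $y\in\tilde S\cap U_k'$ with $F^k(x)=F^k(y)$. The expanding Markov structure propagates this coincidence to all further depths: $F^n(x)=F^n(y)$ for every $n\geq k$. Translating back to $f$ through the first-return times produces $|f^n(x)-f^n(y)|\to 0$, contradicting $(x,y)\in\LY$.

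The main obstacle is precisely this last translation. An $F$-coincidence at depth $k$ yields only $f^{a_k}(x)=f^{b_k}(y)$ with possibly $a_k\neq b_k$, i.e.\ a ``shift-asymptotic'' relation rather than a genuine asymptotic $f$-pair. Overcoming this requires either synchronizing the return-time sums by a careful arithmetic choice of cylinders (exploiting the Markov structure of $F$), or switching engines and invoking Theorem~\ref{mainthm:Strong}: one then Steinhaus-prunes $S$ into a positive-measure strongly wandering subset by removing, in a controlled way, the countable family of inverse-branch graphs along which $f^n(x)=f^m(y)$ can hold. The latter route is feasible because uniform expansion of $F$ gives uniform contraction of its inverse branches, keeping these graphs transverse to the diagonal with slope bounded uniformly away from $1$; quantifying this transversality and summing the measure losses is where I expect the technical weight of the proof to lie.
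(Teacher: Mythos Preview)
Your proposal has a genuine gap. You defer the wild Cantor attractor case to Theorem~\ref{mainthm:Classification}, but that classification does not preclude positive-measure scrambled sets: in its cases (c) and (d) one has $\lambda_2(\LY)>0$, which is perfectly consistent with a scrambled $S$ of positive measure. The wild attractor case is exactly where the work lies, and your induced-map coincidence argument, as you yourself note, only produces a shift relation $f^a(x)=f^b(y)$ with $a\neq b$. Your ``Steinhaus-pruning'' repair is backwards: to make $S$ strongly wandering you would need to excise $S\cap f^{-n}(f^m(S))$ for every pair $n\neq m$, and there is no reason these sets are small---each can have full measure in $S$.

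The paper's argument turns the shift relation into the weapon rather than the obstacle. Since $S$ has positive measure and no asymptotically periodic points, Theorem~\ref{mainthm:Strong} says $S$ is \emph{not} strongly wandering, so $f^n(S)\cap f^m(S)\neq\emptyset$ for some $n<m$. Pick $x\in f^n(S)\cap f^m(S)$ and $y\in f^n(S)$ with $f^{m-n}(y)=x$; since $f^n(S)$ is itself scrambled (because $f^n|_S$ is injective) and $x\neq y$, the pair $(x,y)$ is Li-Yorke, hence so is $(f^{m-n}(x),x)=(f^{m-n}(x),f^{m-n}(y))$. The $\liminf$ condition for this last pair forces an accumulation point $p\in\omega(x)\subset\A$ with $f^{m-n}(p)=p$, a periodic point in an infinite minimal set---contradiction. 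The cycle-of-intervals case, which you mislabel as ``governed by an acip'' (type~(2) attractors need not carry one), is handled separately and directly via $\limsup$-fullness (Theorem~\ref{thm:limsup_full}) together with injectivity of $f^n|_S$.
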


It seems rather paradoxical that scrambled sets have zero measure
even in the case when there is an acip, but this is not really so.
The key point is \emph{measurability}. For instance, one can
easily derive from \cite{Smi1} that the full logistic map
$f(x)=4x(1-x)$ possesses a non-measurable scrambled set with full
\emph{exterior} Lebesgue measure. Since the sets $\Dis, \Asymp,
\LY, \LY_\eps, \AsPer$ are all Borel, hence measurable sets
\cite{Jim3, BrJi}, the moral is that we should measure these sets
rather than scrambled sets. The idea of passing to the square $I
\times I$ to study topological or measure-theoretic properties of
Li-Yorke chaos is due to Lasota and was first used by Pi\'orek
\cite{Pio}.

To begin with, we prove that there almost no ``non-trivial''
asymptotic pairs.
\begin{maintheorem} \label{mainthm:Asymp}
 If $f\in \CIInf(I)$, then $\Asymp \setminus (\AsPer\times \AsPer)$ has zero
 (two-dimensional) Lebesgue measure.
\end{maintheorem}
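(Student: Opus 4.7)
The plan is to combine a Fubini-type argument with Theorem~\ref{mainthm:Strong}. First observe that asymptoticity is transitive: if $(x,y)\in\Asymp$ and $x$ is asymptotic to a periodic point $p$, then so is $y$, hence $y\in\AsPer$. Setting $E:=\Asymp\setminus(\AsPer\times\AsPer)$, which is Borel, this means the $x$-slice $E_x$ is empty whenever $x\in\AsPer$, while $E_x$ coincides with $C_x:=\{y\in I:(x,y)\in\Asymp\}$ whenever $x\notin\AsPer$. By Fubini it therefore suffices to show $|C_x|=0$ for every $x\notin\AsPer$; I would do this by showing that $C_x$ is strongly wandering and invoking Theorem~\ref{mainthm:Strong}.

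That $C_x$ contains no asymptotically periodic point is immediate: if $y\in C_x\cap\AsPer$ with $y$ asymptotic to periodic $p$, then $x$ would be asymptotic to $p$ too, contradicting $x\notin\AsPer$. The disjointness condition $f^n(C_x)\cap f^m(C_x)=\emptyset$ for $n>m\ge 0$ is the crux. Suppose for contradiction that $f^n(y)=f^m(y')$ for some $y,y'\in C_x$, and write $q:=n-m>0$. Applying $f^k$ gives $f^{n+k}(y)=f^{m+k}(y')$ for every $k\ge 0$; combining this with $|f^j(y)-f^j(y')|\to 0$ (both points being asymptotic to $x$, hence to each other) yields
$$
|f^j(y')-f^{j+q}(y')|\longrightarrow 0 \quad \text{as } j\to\infty.
$$
Setting $g:=f^q$, this gives $|g^r(f^s(y'))-g^{r+1}(f^s(y'))|\to 0$ for each $s\in\{0,\ldots,q-1\}$, so a standard cluster-point argument shows $\omega(f^s(y'),g)$ is a connected subset of $\mathrm{Fix}(g)$.

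The desired contradiction now requires that each such $\omega$-limit set collapse to a single $g$-fixed point $p_s$, for then $p_{s+1}=f(p_s)$ shows $\{p_0,\ldots,p_{q-1}\}$ is a periodic $f$-orbit to which $y'$ is asymptotic, so $y'\in\AsPer$, contradicting the first part and proving the disjointness. The main obstacle is precisely this last reduction: ruling out that $\omega(f^s(y'),g)$ is a non-degenerate interval of $g$-fixed points, equivalently a periodic interval of $f$. For $f\in\CIInf(I)$ the absence of wandering intervals together with a dichotomy argument handles this: either $y'$ eventually enters such a periodic interval and is tautologically asymptotically periodic, or its orbit approaches the interval from outside in a controlled way that still forces convergence to a single endpoint. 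Granting this, $C_x$ is strongly wandering, so Theorem~\ref{mainthm:Strong} gives $|C_x|=0$, and Fubini closes the argument.
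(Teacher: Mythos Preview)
Your approach is correct and genuinely differs from the paper's, though both hinge on Theorem~\ref{mainthm:Strong}. The paper does not show that $C_x$ is strongly wandering. Instead it assumes $|C_x|>0$, uses Egorov to pass to a positive-measure $Z\subset C_x$ on which the convergence $|f^n(\cdot)-f^n(x)|\to 0$ is uniform (so $\diam f^n(Z)\to 0$), and then invokes Theorem~\ref{mainthm:Strong} in the \emph{contrapositive}: a positive-measure set without asymptotically periodic points cannot be strongly wandering, so some collision $f^k(Z)\cap f^m(Z)\neq\emptyset$ occurs. The contradiction is then obtained directly from a single non-periodic accumulation point $u$ of $\orb(x)$: at a late time $l$ with $f^l(Z)$ small and near $u$, the set $f^{l+(k-m)}(Z)$ is near $f^{k-m}(u)\neq u$, so $f^l(Z)\cap f^{l+(k-m)}(Z)=\emptyset$, contradicting persistence of the collision. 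You instead prove $C_x$ strongly wandering outright and apply Theorem~\ref{mainthm:Strong} in the forward direction; this avoids Egorov but requires your $\omega$-limit analysis.

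The step you flag as the obstacle is in fact routine and needs neither the absence of wandering intervals nor any special property of $\CIInf(I)$. With $g=f^q$ and $|g^r(y')-g^{r+1}(y')|\to 0$, the set $\omega(y',g)$ is a connected subset of $\mathrm{Fix}(g)$. Suppose it were a non-degenerate interval $J=[a,b]$. If some $g^R(y')\in J$ then the $g$-orbit is eventually constant, forcing $\omega(y',g)$ to be a singleton --- contradiction. Otherwise every iterate lies in $[0,a)\cup(b,1]$; once the step size drops below $b-a$ the orbit is trapped on one side, say $(b,1]$, whence every cluster point is $\ge b$, contradicting $a\in\omega(y',g)$. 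Thus $\omega(y',g)=\{p\}$ for some $p\in\mathrm{Fix}(g)$, so $f^{qr}(y')\to p$ and by continuity $|f^n(y')-f^n(p)|\to 0$, giving $y'\in\AsPer$ and closing your argument. In short: the paper trades this small case analysis for Egorov plus a one-line accumulation-point contradiction; your route trades Egorov for a direct verification of strong wandering.
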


We can consider Theorem~\ref{mainthm:Asymp} as a weak form of
sensitivity to initial conditions in the absence of periodic
attractors. It is really quite weak: it applies in particular to
the infinitely renormalizable case where almost all points are
attracted by a solenoidal set, so there is no sensitivity to
initial conditions in the standard Guckenheimer sense \cite{Guc}.
What Theorem~\ref{mainthm:Asymp} emphasizes is that there are no
``privileged'' routes (that is, with positive measure) to
measure-theoretic attractors.

The Li-Yorke property describes how chaotically pairs of points
behave with respect to each other, and is hence a property of the
Cartesian product $(I^2, f_2)$, for $I^2 = I \times I$ and
$f_2(x,y) := (f\times f)(x,y)=(f(x),f(y))$. If $(x,y)$ is
Li-Yorke, then $\orb((x,y))$ accumulates on, but does not converge
to, the diagonal of $I^2$. Hence LY is a weaker property than
$(x,y)$ having a dense orbit in $I^2$. We want to find conditions
ensuring that $\LY$ has positive (or full) mass w.r.t.\
two-dimensional Lebesgue measure $\lambda_2$. Recall the a map $g$
is called \emph{topologically mixing} if every iterate $g^n$ has a
dense orbit. According to Proposition~\ref{propAttr}, the orbit of
Lebesgue a.e.\ point is either attracted by a periodic point
or a solenoidal set, or eventually falls into an interval $K$ such
that $f^r(K)=K$ for some $r$ and the restriction $g=f^r|_K$ is
topologically mixing. As we explained earlier, approximately
periodic points cannot be used to produce Li-Yorke pairs. On the
other hand, all sets from Definition~\ref{def:basic} are the same
for $f$ as for any of its iterates $f^r$. Thus we can restrict
ourselves to the case where the map $f$ itself is topologically
mixing.

Now, if $f$ is topologically mixing, then either almost all points
have a dense orbit, or the orbits of almost all points are
attracted by finitely many pairwise disjoint wild Cantor
attractors. It turns out that in the first case $\lambda_2$-a.e.
pair $(x,y)$ is Li-Yorke.

\begin{maintheorem}\label{mainthmLY}
 Let $f\in \CIIInf(I)$ be a topologically mixing map having no
 Cantor attractors. Then the Cartesian product $(I^2,\lambda_2,f_2)$
 is ergodic and for every $x\in I$ there is a
 full measure set $C_x\subset I$ such that
  $$
  \liminf_{n\to\infty} |f^n(y)- f^n(x)|=0, \;\;\;\limsup_{n\to\infty}
  |f^n(y)- f^n(x)|\geq \diam(I)/2,
  $$
 for every $y\in C_x$. In particular, $\LY_{1/2}$ has full measure
 and $f$ is Li-Yorke sensitive.
\end{maintheorem}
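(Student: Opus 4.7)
The statement has three parts: (a) ergodicity of the product $(I^2,\lambda_2,f_2)$; (b) the pointwise $\liminf/\limsup$ conclusion \emph{for every} $x\in I$; (c) the Li-Yorke corollaries (full measure of $\LY_{1/2}$, sensitivity). I would treat them in this order, leveraging the uniformly expanding Markov induced maps (UEMIM) whose existence is established earlier in the paper.

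For (a), the hypotheses (topologically mixing, no Cantor attractor, $f\in \CIIInf$) combined with the UEMIM let me produce an acip $\mu$ for $f$ that is equivalent to Lebesgue on $I$ and such that $(f,\mu)$ is exact: the induced map carries a Gibbs-type acip which pushes down to $\mu$, and topological mixing plus the Markov structure forces exactness of the base system. Exactness of $(f,\mu)$ implies exactness, hence ergodicity, of $(f_2,\mu\times\mu)$. Since $\mu\times\mu$ and $\lambda_2$ are equivalent, ergodicity of $(I^2,\lambda_2,f_2)$ follows.

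For (b), fix $x\in I$ and set $D_x=\{y:\liminf_n|f^n(y)-f^n(x)|=0\}$ and $E_x=\{y:\limsup_n|f^n(y)-f^n(x)|\geq \diam(I)/2\}$; one needs both to be of full $\lambda$-measure, so that $C_x=D_x\cap E_x$ works. I plan to apply a dynamical Borel--Cantelli lemma to moving targets dictated by the orbit of $x$. For $D_x$: for each $\eps_k\to 0$ the intervals $B_n^{(k)}=(f^n(x)-\eps_k,f^n(x)+\eps_k)$ satisfy $\sum_n\mu(B_n^{(k)})=\infty$ because the density of $\mu$ is bounded below on compact subintervals of the interior; the summable decay of correlations provided by the UEMIM tower lets one conclude that for $\lambda$-a.e.\ $y$, $f^n(y)\in B_n^{(k)}$ infinitely often, and intersecting over $k$ gives $\liminf=0$. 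For $E_x$: at least one of $A^+=\{n:f^n(x)\leq 1/2\}$ or $A^-=\{n:f^n(x)>1/2\}$ is infinite; without loss of generality, $A^+$. Take $B_n=[1-\delta,1]$ for $n\in A^+$ and $B_n=\emptyset$ otherwise; then $\mu(B_n)$ is bounded below on $A^+$ and $\sum_n\mu(B_n)=\infty$, so dynamical Borel--Cantelli again gives $f^n(y)\in B_n$ infinitely often for a.e.\ $y$, whence $|f^n(y)-f^n(x)|\geq 1/2-\delta$ for infinitely many $n$. Letting $\delta\to 0$ along a countable sequence yields $E_x$ of full measure.

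The corollaries in (c) are then immediate: by the Borel measurability of $\LY_{1/2}$ established earlier, Fubini applied to the full-measure slices $C_x\subseteq\{y:(x,y)\in\LY_{1/2}\}$ gives $\lambda_2(\LY_{1/2})=1$. Li-Yorke sensitivity follows since, for every $x$ and every neighborhood $U\ni x$, the density of $C_x$ supplies a point $y\in U$ with $(x,y)\in\LY_{1/2}$. The main obstacle is (b), specifically the need to treat \emph{every} $x$: a naive Fubini argument based only on (a) yields the conclusion only for $\lambda$-a.e.\ $x$, and upgrading to every $x$ forces the use of quantitative mixing (summable decay of correlations for $(f,\mu)$) to run the dynamical Borel--Cantelli machinery with moving targets.
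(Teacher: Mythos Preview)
Your approach has a genuine gap at its very first step: the hypotheses of the theorem do \emph{not} guarantee the existence of an acip. A topologically mixing map in $\CIIInf(I)$ with no Cantor attractor can fail to have any absolutely continuous invariant probability measure; this is exactly the Johnson--Lyubich phenomenon discussed in the introduction and around Proposition~\ref{prop:limsup_full_equiv}. The induced Markov map from Theorem~\ref{thm:induced} does carry an acip, but pushing it down to an acip for $f$ requires that the inducing time $k_x$ be Lebesgue-integrable, and nothing in the hypotheses forces this (the conjecture following Corollary~\ref{cor:density} is precisely about the non-integrable case). Consequently your route to (a) via ``$\mu\times\mu$ equivalent to $\lambda_2$'' collapses, and your route to (b) via summable decay of correlations for $(f,\mu)$ and dynamical Borel--Cantelli has nothing to stand on.

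The paper avoids this entirely by working with a weaker property that \emph{does} hold without an acip: $\limsup$ fullness of Lebesgue measure (Theorem~\ref{thm:limsup_full}), i.e.\ $\limsup_n\lambda(f^n(A))=1$ for every set $A$ of positive measure. This is proved directly from the induced map (bounded distortion of all iterates of $F$ plus topological mixing), with no integrability needed. Barnes's theorem then gives exactness of $\lambda$, and Lemma~\ref{exact-ergodic} gives ergodicity of $\lambda_2$---note that neither step requires an invariant measure. For (b), the argument is a two-line contradiction in each direction: if a positive-measure set $A$ of $y$'s had $|f^n(y)-f^n(x)|<\diam(I)/2$ for all large $n$, then $f^n(A)$ would be trapped in a half-interval, contradicting $\limsup_n\lambda(f^n(A))=1$; similarly for the $\liminf$. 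This handles \emph{every} $x$ automatically, with no Borel--Cantelli machinery. Your instinct that ``every $x$'' is the hard part is correct, but the paper's device ($\limsup$ fullness) is both what makes it work and what survives the absence of an acip.
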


Weaker versions of this result under additional Misiurewicz or
negative Schwarzian conditions were proved earlier \cite{BaJi1,
BrJi}. Ergodicity of $(I^2, \lambda_2, f_2)$ is parallel to
Lebesgue measure $\lambda$ being weak mixing, although in its
standard definition, weak mixing applies to invariant measures
only, see Subsection~\ref{subsec:ergodic}. If $f$ admits an acip,
then we can say more: $\lambda_2$-a.e. pair $(x,y)$ has a dense
orbit in $I^2$, see Corollary~\ref{cor:density}. However, it seems
possible that there are cases where $f$ admits no acip,
$\lambda_2$-a.e.\ pair $(x,y)$ is Li-Yorke, but has no dense orbit
in $I^2$.

It remains to consider the case where $f\in \CIInf(I)$ is a
topologically mixing map having a wild attractor $\A$. Let
$\bas(\A)$ be the set of points whose orbit is attracted by $\A$.
We already know (Theorem~\ref{mainthm:Asymp}) that $\Asymp$ has
zero measure.  Also, if some $x \in \bas(\A)$ is approximately
periodic, then by Proposition~\ref{PropApproxPer}, $\A$ is
conjugate to an adding machine, so all points in $\bas(\A)$ are
approximately periodic. Finally, recall that every pair of
approximately periodic points is either asymptotic or distal. Our
result in this area classifies which behaviors can (and indeed do)
occur:
\begin{maintheorem}\label{mainthm:Classification}
Let $f\in \CIInf(I)$ be a topologically mixing map with a wild
attractor $\A$. Then one of the following alternatives must occur:
\begin{itemize}
\item[(a)] Lebesgue a.e.\ pair of points in $\bas(\A)$ is distal and
every point in $\bas(\A)$ is approximately periodic;
\item[(b)] Lebesgue a.e.\ pair of points in $\bas(\A)$ is distal and
no point in $\bas(\A)$ is approximately periodic;
\item[(c)] Lebesgue a.e.\ pair of points in $\bas(\A)$ is Li-Yorke;
\item[(d)] Both $\Dis$ and $\LY$ have positive  Lebesgue measure in
$\bas(\A)\times \bas(\A)$.
\end{itemize}
There are examples of polynomial unimodal maps of all
above types (a)-(d) so that additionally, in cases (b)-(d),
 $\bas(\A)$ contains $\eps$-scrambled sets
for a fixed $\eps > 0$ and $f$ is Li-Yorke sensitive on $\bas(\A)$.
\end{maintheorem}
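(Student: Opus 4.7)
The proof naturally splits into two parts: showing that (a)--(d) exhaust the possibilities and constructing polynomial unimodal maps realising each type. For exhaustiveness, I would use the dichotomy sketched in the paragraph preceding the theorem: by Proposition~\ref{PropApproxPer}, if any single $x\in\bas(\A)$ is approximately periodic, then $\A$ is conjugate to an adding machine and every point of $\bas(\A)$ is approximately periodic; moreover $\A$ then contains no periodic orbits, so $\bas(\A)\cap\AsPer=\emptyset$. In this branch Proposition~\ref{prop:approximately} forces every pair in $\bas(\A)\times\bas(\A)$ into $\Asymp\cup\Dis$, while Theorem~\ref{mainthm:Asymp} (applied to a region disjoint from $\AsPer\times\AsPer$) eliminates $\Asymp$, giving case (a). In the complementary branch no point of $\bas(\A)$ is approximately periodic, hence none is asymptotically periodic, and Theorem~\ref{mainthm:Asymp} again makes $\Asymp\cap(\bas(\A)\times\bas(\A))$ $\lambda_2$-null; thus $\lambda_2$-a.e.\ pair lies in $\Dis\cup\LY$, and the three mutually exclusive possibilities ($\Dis$ full, $\LY$ full, or both of positive measure in $\bas(\A)\times\bas(\A)$) are precisely (b), (c), (d).

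For the examples, case (a) is realised by the wild adding machines constructed in \cite{Btree}, so only (b)--(d) require new work. My starting point would be a polynomial unimodal map with Fibonacci-type combinatorics from the family of \cite{BKNS}, the standard source of wild attractors outside the adding-machine regime. Using the uniformly expanding Markov induced map built earlier in this paper, I would realise the dynamics on $\bas(\A)$ as a tower over the induced map, coded by the sequence of nice-neighborhood levels at which returns occur. A pair $(x,y)\in\bas(\A)\times\bas(\A)$ can then be tracked by the joint coding of its returns, and $|f^n(x)-f^n(y)|$ is controlled by how often these two codings simultaneously descend deep into nested nice neighborhoods of the critical point: simultaneous deep returns make $\liminf$ small, while mismatched returns keep $\limsup$ bounded below.

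The three remaining cases correspond to three distinct joint-return statistics. For (b) I would seek combinatorics where simultaneous deep returns are too rare to drive $\liminf_n|f^n(x)-f^n(y)|$ to zero on a positive-measure set. For (c), a Borel--Cantelli-type estimate on the induced tower should force $\lambda_2$-a.e.\ pair to experience infinitely many synchronous deep returns while ergodic estimates keep $\limsup_n|f^n(x)-f^n(y)|$ above a fixed $\eps>0$; the $\eps$-scrambled sets and Li-Yorke sensitivity on $\bas(\A)$ then follow from a Mycielski-type extraction inside the resulting full-$\lambda_2$-measure set of $\LY_\eps$ pairs. Case (d) can be produced either by interpolating combinatorics between the regimes of (b) and (c), via a parameter-exclusion argument in the Fibonacci family, or by arranging the critical dynamics so that $\bas(\A)$ decomposes into two positive-measure pieces with distinct joint-return behaviors. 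The main obstacle will be translating the symbolic control of joint returns in (c) and (d) into quantitative $\lambda_2$-estimates: this demands the distortion bounds implicit in the nice-neighborhood machinery together with careful accounting of how the self-similar Cantor geometry of $\A$ interacts with Lebesgue measure on its basin, and, in order to obtain genuine polynomial (rather than merely $C^2$) representatives, a parameter-exclusion argument inside the quadratic Fibonacci family along the lines of \cite{BKNS}.
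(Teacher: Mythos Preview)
Your exhaustiveness argument is correct and is essentially the paper's own: the dichotomy on approximate periodicity via Proposition~\ref{PropApproxPer}, Proposition~\ref{prop:approximately}, and Theorem~\ref{mainthm:Asymp} is exactly how the paper disposes of this part, and your reference to \cite{Btree} for case~(a) is the right one.

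The difficulties are all in the construction of examples for (b)--(d), and here your sketch misses the actual mechanisms. First, the induced Markov map of Theorem~\ref{thm:induced} is built precisely for maps \emph{without} Cantor attractors; it is not the right tool on $\bas(\A)$. The paper instead works with the Hofbauer-tower induced map on the states $E_l=\D_{1+S_l}$, driven entirely by the kneading map $Q$ and the cutting times $S_k$. Second, the split between (b) and (c) is not a matter of vague ``joint-return statistics'' but an arithmetic dichotomy: for $Q(k)=\max\{k-d,0\}$ with $d=2,3,4$ the leading root $\rho$ of $\lambda^d=\lambda^{d-1}+1$ is Pisot, so $\sum_k k\,|\fp(\rho S_k)|<\infty$ and one can build a measurable factor map $\tilde\pi:\bas(\A)\to\S^1$ (or $\T^{d-1}$) intertwining $f$ with an irrational rotation; points in distinct fibres are then distal, and each fibre has Lebesgue measure zero by Theorem~\ref{mainthm:Strong}. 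For $d=5$ the root is not Pisot, $(\A,\mu)$ is weakly mixing, and the decisive input is the Lebesgue \emph{exactness} of $f$ on $\bas(\A)$ proved in \cite{BrHaw}, after which Proposition~\ref{PropExactDis} kills $\Dis$. Your Borel--Cantelli outline does not supply either ingredient. Your idea for (d) of splitting $\bas(\A)$ into two pieces is in fact what the paper does (a non-renormalizable map whose attractor decomposes into two Cantor sets permuted by $f$, with $f^2$ on each piece of type~(c)), but ``parameter exclusion in the quadratic Fibonacci family'' cannot work: wild attractors require large critical order $\ell_c\gg 2$, so the polynomial examples are of high degree, not quadratic.

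Finally, note a gap specific to case~(b): you propose to extract $\eps$-scrambled sets by Mycielski from a full-measure set of $\LY_\eps$ pairs, but in case~(b) the set $\LY$ has $\lambda_2$-measure \emph{zero}. The paper obtains the $\eps$-scrambled sets in (b) by an explicit symbolic construction inside each fibre $\tilde\pi^{-1}(\tau)$ (Proposition~\ref{prop:scrambled_fibers}), using loops in the Markov graph that visit neighbourhoods of two distinct preimages of a single point of $\A$; this is where the $\eps$-scrambled sets and Li-Yorke sensitivity actually come from.
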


\section{Preliminaries}\label{sec:prelimnaries}

\subsection{Interval maps}\label{subsec:maps}

A continuous map $f:I \to I$ (for $I=[0,1]$) is called
\emph{multimodal} if $[0,1]$ can be decomposed into finitely many
subintervals on which $f$ is (strictly) monotone. A point $c$
is \emph{critical} if $f'(c) = 0$; the set of critical points is
denoted by $\Crit$. Critical points can be \emph{turning} points
(if $f$ assume a local extremum at $c$) or \emph{inflection}
points, and hence there can be more critical points than maximal
intervals of monotonicity. A differentiable map having exactly one
critical (turning) point is called \emph{unimodal}.
Near a turning
point $c$, there is a largest interval  $[a,b]$ such that
$f(a)=f(b)$ and $f$ is monotone on each of the intervals $[a,c]$
and $[c,b]$. Then there is a continuous involution
$\tau_c:[a,b]\to [a,b]$ such that $f(\tau_c(x))= f(x)$ and
$\tau_c(x)\neq x$ for every $x \neq c$.

We say that $f \in C^k_{nf}(I)$ if $f:I \to I$ has a finite
critical set $\Crit$, is $C^k$ and each critical point is {\em
non-flat}, \ie for each $c \in \Crit$, there exist a $C^k$
diffeomorphism $\varphi_c$ with $\varphi_c(c)=0$ and an $\ell_c
\in (1, \infty)$, called the {\em critical order} of $c$, such
that $f(x)=\pm|\varphi_c(x)|^{\ell_c} + f(c)$ for $x$ close to
$c$. It follows that if $n$ is the smallest integer $\geq \ell_c$,
then $D^m f(c)= 0$ for $1 \leq m < n$, but $D^n f(c) \neq 0$.
Conversely, if $f$ is $C^{k+1}$ near $c$ and $D^n f(c) \neq 0$ for
some $2\leq n\leq k+1$, then $c$ is non-flat.

We can always enlarge
the domain of $f$ (without adding new critical points) and
rescale such that
\begin{equation} \label{eq:nicely_scaled}
 \text{$f(\partial I)\subset \partial I$ \;\;\; and \;\;\; $\Crit\cap \partial
 I=\emptyset$.}
\end{equation}
Observe that this operation does not change the zero or positive
measure quality of the sets from Definition~\ref{def:basic}.
Hence, except when $f$ is topologically mixing, we will
always assume that \eqref{eq:nicely_scaled} holds. Also, we will
assume without loss of generality that
\begin{equation}
 \label{nonperiodic}
 \text{$\Crit$ does not contain any periodic point,}
\end{equation}
because if $c$ is a periodic critical point, then we can modify
slightly $f$ near the orbit of $c$ so that the resulting map has a
periodic orbit containing no critical points and attracting the
same points as previously attracted by the orbit of $c$.

Recall that the map $f$ is topologically mixing if every iterate
$f^n$ has a dense orbit. This excludes the case that there is a
proper compact subinterval $J$ of $I$ such that $f^r(J)\subset J$
for some $r\geq 1$ (\ie $f$ is \emph{non-renormalizable}). The map
$f$ is \emph{topologically exact}  (also called \emph{locally
eventually onto}) if for every non-degenerate interval $J \subset
I$, there is $n$ such that $f^n(J) = I$. For multimodal maps,
topologically mixing and topologically exact are equivalent, see
\eg \cite[pp. 157--158]{BlCo}.

\subsection{Attractors of interval maps}\label{subsec:attractors}

 The \emph{orbit} $\{f^n(x)\}_{n=0}^\infty$ of a point $x$
is denoted by $\orb(x)$. More generally, the orbit of a set $A$ is
$\orb(A) := \bigcup_{n=0}^\infty f^n(A)$. The \emph{$\omega$-limit
set} $\omega(x) := \bigcap_{n \in \N} \Cl\bigcup_{m \geq n}
f^n(x)$ of $x$ is the set of limit points of the orbit of $x$. If
$\A$ is a subset of $I$, then we call $\bas(\A) = \{ x \in I :
\omega(x) \subset \A\}$ the {\em basin (of attraction)} of $\A$. A
periodic orbit $\O$ is called \emph{attracting} if its basin
contains an open set. The union of the components of $\bas(\O)$
intersecting $\O$ is called the \emph{immediate basin} of $\O$. If
$\bas(\O)$ contains a neighborhood of $\O$, then $\O$ is called a
\emph{two-sided} attracting periodic orbit; otherwise it is called
a \emph{one-sided} attracting periodic orbit. If $p$ is a periodic
point of period $r$ and $|Df^r(p)|$ is less than, equal to, or
greater than $1$, then the orbit of $p$ is called \emph{hyperbolic
attracting}, \emph{parabolic}, or \emph{hyperbolic repelling}
respectively. Of course, only hyperbolic attracting and parabolic
orbits can be attracting. If $f\in \CIInf(I)$, then it is well
known  that all periodic orbits of sufficiently high period are
hyperbolic repelling \cite{MMS}. Furthermore, maps in $\CIInf(I)$
have no wandering intervals. One of the consequences of this is
the following useful result. Below $\dist(A,B)$ denotes the
distance between the sets $A$ and $B$ (by convention
$\dist(A,\emptyset)=\infty$).

\begin{proposition}[The non-Contraction Principle] \label{prop:contraction}
 Let $f:I\to I$ be a multimodal map without wandering
 intervals. Then for every $\eps>0$ there is $\delta>0$ such that
 if $J$ is an interval such that  $|J|<\delta$ and $\dist(J,p)>\eps$
 for each attracting periodic point $p$, then every
 component of every preimage $f^{-n}(J)$ of $J$ has length less
 than $\eps$.
\end{proposition}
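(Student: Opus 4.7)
The plan is to argue by contradiction. Suppose the conclusion fails at some $\eps>0$: then there exist intervals $J_k$ with $|J_k|\to 0$ and $\dist(J_k,p)>\eps$ for every attracting periodic point $p$, together with pullback components $L_k$ of $f^{-n_k}(J_k)$ satisfying $|L_k|\geq\eps$. By Hausdorff compactness on closed subintervals of $I$, pass to subsequences so that $L_k\to L$, a closed interval with $|L|\geq\eps$, and $J_k\to\{y\}$ for some $y\in I$. Pick any non-degenerate closed subinterval $L'\subset\Inte(L)$; then $L'\subset L_k$ for $k$ large, so $|f^{n_k}(L')|\leq |J_k|\to 0$.

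If $(n_k)$ admits a bounded subsequence, extract a further subsequence with $n_k\equiv n$ constant; then $f^n(L')=\{y\}$ by continuity, which contradicts the piecewise monotonicity of $f^n$ on the non-degenerate interval $L'$. Assume from now on that $n_k\to\infty$.

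Since $f$ has no wandering intervals, $L'$ is not wandering, so there exist $m<n$ with $\Inte(f^m(L'))\cap \Inte(f^n(L'))\neq\emptyset$. Iterating this coincidence shows that $L'$ is eventually absorbed into a periodic cycle of intervals $K$, meaning $f^j(L')\subset K$ for all $j$ large and $f^p(K)=K$ for some $p\geq 1$. The classical structural dichotomy for multimodal maps without wandering intervals says that one of the following three scenarios holds on $K$: (i) $K$ contains an attracting periodic orbit $\O$ whose basin contains $L'$; (ii) $K$ is non-renormalizable and $f^p$ is topologically mixing on a component $K_0$ of $K$ that eventually absorbs $L'$; or (iii) $K$ is infinitely renormalizable and $L'$ lies in the basin of a solenoidal attractor.

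In case (i), $f^j(x)\to\O$ for every $x\in L'$, hence $y\in\O$, so $\dist(J_k,\O)\leq\dist(J_k,y)\to 0$, contradicting $\dist(J_k,\O)>\eps$. In case (ii), topological mixing of $f^p$ on $K_0$ supplies an $N$ with $f^{pN+m}(L')\supset K_0$, and then $f^{pj+m}(L')\supset K_0$ for all $j\geq N$; this forces $|f^{n_k}(L')|\geq |K_0|>0$ for infinitely many $k$, a contradiction. The main obstacle is case (iii): here $y$ may be a non-periodic point of the solenoid and the distance hypothesis gives no immediate contradiction. One must exploit the geometric structure of the renormalization cycle to show that a non-degenerate $L'$, straddling two periodic intervals of the finer cycle at some finest renormalization level at which $L'$ fits, cannot have its images shrink to zero length; this amounts to a careful combinatorial/distortion estimate across renormalization levels, and is the step where the topological no-wandering-intervals hypothesis gets used in full.
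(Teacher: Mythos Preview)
The paper does not give a proof; it simply observes that the statement follows from the Contraction Principle in \cite[p.~305]{dMvS}. You are therefore attempting a self-contained argument where the authors defer to the literature.

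Your compactness setup and the treatment of bounded $n_k$, and of cases (i) and (ii), are correct. There are two problems. First, the trichotomy is loosely stated: it is not clear that (i)--(iii) are exhaustive, and the hypothesis in (iii) that $L'$ lie entirely in the basin of a solenoidal attractor is stronger than what the infinitely renormalizable structure of $K$ alone gives you (your own subsequent remark about $L'$ ``straddling two periodic intervals of the finer cycle'' implicitly acknowledges this). Second --- and this is the real gap --- you explicitly leave case (iii) unfinished. You correctly identify what needs proving, namely that a non-degenerate interval whose iterates enter an infinitely renormalizable cycle cannot have $|f^{n_k}(L')|\to 0$, but you do not prove it; you only say it ``amounts to a careful combinatorial/distortion estimate.'' That estimate is essentially the content of the Contraction Principle itself, which is exactly why the authors cite \cite{dMvS} rather than argue from scratch.
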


\begin{proof}
This follows easily from the Contraction Principle as stated in \cite[p.
 305]{dMvS}. The principle is a bit of a misnomer, so we added
non- in our version.
\end{proof}


\begin{definition}\label{def:attractor}
We call a closed invariant set $\A$ a {\em (measure-theoretic)
attractor} if its basin has positive Lebesgue measure, and there
is no proper subset ${\A}' \subset \A$ with the same properties.
\end{definition}

Note that $\A$ need not be an attractor in any
topological sense: the basin $\bas(\A)$ need not contain a
neighborhood of $\A$, nor be of second Baire category.

In order to understand the nature of measure-theoretic attractors
of maps from $\CIInf(I)$, certain types of interval cycles are of
particular interest. We say that a compact interval $K$ is
\emph{periodic (of period r)} if
$K,\ldots, f^{r-1}(K)$ have disjoint interiors and $f^r(K)\subset K$.
We call the union $\bigcup_{i=0}^{r-1}f^i(K)$ a \emph{cycle of
intervals} and denote it by $\cyc(K)$.
If $K_0\supset K_1\supset \cdots$ is a nested sequence of
periodic intervals of periods $r_0<r_1<\cdots$, then
$S=\bigcap_{i=0}^\infty \cyc(K_i)$ is called a \emph{solenoidal
set} and all points from $S$ are called \emph{solenoidal points}.
If $r_{i+1}=2r_i$ for every $i$ sufficiently large, then we say
that both $S$ and its points are of \emph{Feigenbaum type}\label{feig}. If
$i_0$ is such that $\cyc(K_{i_0})\cap \Crit = S\cap \Crit$ and all
periodic points in $\cyc(K_{i_0})$ are hyperbolic repelling, then
we say that $\cyc(K_{i_0})$ is \emph{solenoidal}.
Thus all solenoidal sets are of Cantor type and all cycles
$\cyc(K_i)$ are solenoidal if $i$ is sufficiently large.

A compact invariant set $A$ is \emph{Lyapunov stable} if for every
neighborhood $U$ of $A$, there exists a neighborhood $V$ of $A$
such that $f^n(V)\subset U$ for every $n$. We have the following
classification of measure-theoretic attractors, see
\cite{mane,keller,BL,Lypre,dMvS,vSV}.

\begin{proposition}\label{propAttr}
If $f\in \CIInf(I)$, then $f$ has countably many attractors $\A$,
which are of the following types:
\begin{enumerate}
\item $\A$ is an attracting periodic orbit;
\item $\A$ is a cycle of intervals on which Lebesgue a.e.\ orbit is dense;.
\item $\A$ is a solenoidal set (the infinitely
 renormalizable case). Then $\A$ is Lyapunov stable and
 the basin $\bas(\A)$ is of second Baire
 category.
\item $\A$ is a minimal Cantor set, but not of the above type.
 In particular, $\A$ is not Lyapunov stable, and
 $\bas(\A)$ is of first Baire category.
\end{enumerate}
For Lebesgue a.e. $x \in I$, either $x$ has a finite orbit (that
is, $x$ is eventually periodic) or $\omega(x)$ is one of the
attractors above, and the number of attractors of type (2)-(4)
together is no more than the number of critical points (because
each of them must contain at least one critical point).
\end{proposition}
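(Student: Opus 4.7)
The plan is to synthesize the structure theorems from the cited references \cite{mane,keller,BL,Lypre,dMvS,vSV}, all of which ultimately rely on the absence of wandering intervals for maps in $\CIInf(I)$. The overall strategy is a dichotomy on $\omega$-limit sets based on whether $\omega(x)$ meets $\Crit$, after separating off attracting periodic orbits. For type~(1), every attracting periodic orbit $\O$ has an immediate basin containing an open interval (by continuity and non-flatness), so $\O$ is a measure-theoretic attractor; the finiteness results of \cite{MMS,dMvS} for $C^2$ non-flat multimodal maps even bound the number of such orbits in terms of $\#\Crit$, certainly yielding countably many. I then restrict to $x$ with infinite orbit, set $\A := \omega(x)$, and proceed by cases.

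Case~A ($\A \cap \Crit = \emptyset$): Since $\A$ contains no critical point and no attracting periodic orbit, Ma\~n\'e's theorem \cite{mane} provides a neighborhood $U \supset \A$ on which $f$ is uniformly expanding along returns to $U$. Combined with Proposition~\ref{prop:contraction}, this forces the orbit of $x$ to be eventually trapped in a cycle of intervals $\cyc(K)$, and Blokh's spectral decomposition \cite{BL,keller} then shows that Lebesgue-a.e.\ orbit in $\cyc(K)$ is dense, giving a type-(2) attractor.

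Case~B ($\A \cap \Crit \neq \emptyset$): Renormalization theory \cite{dMvS,Lypre} produces a maximal nested sequence of periodic intervals $K_0 \supsetneq K_1 \supsetneq \cdots$ of periods $r_0 < r_1 < \cdots$ with $\A \subset \cyc(K_i)$. If the sequence is finite, $f^{r_N}|_{K_N}$ is non-renormalizable and Case~A applied inside $K_N$ yields a type-(2) attractor. If the sequence is infinite, $\A = \bigcap_i \cyc(K_i)$ is a Cantor set. When the tail cycles are solenoidal in the sense of Section~\ref{subsec:attractors} (eventually $\Crit \cap \cyc(K_i) = \Crit \cap \A$ and all other periodic points in $\cyc(K_i)$ are hyperbolic repelling), the nested forward-invariant neighborhoods $\cyc(K_i)$ immediately yield Lyapunov stability, and hence $\bas(\A)$ contains an open neighborhood of $\A$, which is second Baire category---type~(3). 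Otherwise $\A$ is a wild Cantor attractor of type~(4); its existence in suitable polynomial unimodal examples and its zero Lebesgue measure are the substantive content of \cite{BKNS,vSV}, and $\bas(\A)$ is first Baire category because the non-Lyapunov-stable $\A$ is escaped by an open dense set in each of its neighborhoods.

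To bound the number of type-(2)--(4) attractors by $\#\Crit$: each such $\A$ must contain a critical point (otherwise Ma\~n\'e's theorem would force $\A$ to be a finite hyperbolic set, whose basin has zero Lebesgue measure, contradicting $\A$ being a measure-theoretic attractor), and distinct attractors are disjoint invariant sets and hence contain disjoint critical subsets. Finally, the statement that Lebesgue-a.e.\ $x$ either is eventually periodic or has $\omega(x)$ equal to one of these attractors follows by combining the case analysis above with the fact that the set of points falling outside all these basins has zero measure (since each alternative exhausts a measurable invariant piece). The main obstacle---and the only genuinely deep input---is the existence, zero Lebesgue measure, and first-category basin property of the wild attractor in case~(4), together with the second-category claim in case~(3); these are imported from \cite{BKNS,vSV,dMvS} rather than reproved here.
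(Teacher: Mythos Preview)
The paper does not give its own proof of this proposition; it is stated as a known structure theorem with the citation ``see \cite{mane,keller,BL,Lypre,dMvS,vSV}'' and the subsequent paragraph only records further consequences. So your task was really to outline how those references assemble into the stated classification.

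Your sketch has a structural misplacement of the attractor types that would make the argument fail. In Case~A ($\omega(x)\cap\Crit=\emptyset$) you conclude type~(2), but a type~(2) attractor is a cycle of intervals and therefore \emph{must} contain a critical point (a multimodal map restricted to an interval cycle on which a.e.\ orbit is dense cannot be a homeomorphism). The correct conclusion from Ma\~n\'e's hyperbolicity theorem is that the set of $x$ with $\omega(x)\cap\Crit=\emptyset$ and $x$ not in the basin of an attracting periodic orbit has Lebesgue measure zero; so Case~A feeds into type~(1), not type~(2).

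Conversely, in Case~B you place the wild attractor under the ``infinite nested sequence of periodic intervals'' branch. But an infinite nested sequence of periodic intervals containing $\A$ is exactly the definition of $\A$ being solenoidal, i.e.\ type~(3); there is no ``otherwise'' within that branch. Wild attractors (type~(4)) arise precisely in the \emph{finitely} renormalizable situation: inside the deepest periodic interval $K_N$, the map $f^{r_N}|_{K_N}$ is non-renormalizable and topologically mixing, and the dichotomy there (this is where \cite{BL,Lypre,vSV} enter) is that either Lebesgue-a.e.\ orbit is dense in $\cyc(K_N)$ (type~(2)) or a.e.\ orbit is attracted to a minimal Cantor set $\A\subsetneq\cyc(K_N)$ that is not a nested intersection of periodic cycles (type~(4)). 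Your reduction ``finite sequence $\Rightarrow$ apply Case~A inside $K_N$'' cannot work, since $\omega(x)$ still contains a critical point there. Once the case split is corrected, the remaining ingredients you cite (Lyapunov stability and second-category basin for solenoids from the nested invariant neighborhoods; first-category basin for wild attractors from \cite{BL}; the $\#\Crit$ bound via Ma\~n\'e) are the right ones.
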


More can be said: there can be countably many disjoint cycles, but
all but finitely many of them must be disjoint from the basins of
periodic attractors. If a cycle  is solenoidal, then almost all
its orbits are attracted by the solenoidal set contained in the
cycle. If a cycle $\cyc(K)$ contains a dense orbit, then one of
the following two holds:
\begin{itemize}
 \item The whole cycle is an attractor and almost all its points
are dense in $\cyc(K)$. If $K$ has period $r$  this can still mean
that $K$ consists of two intervals $J$ and $J'$ with a common
boundary point such that $f^r(J) = J'$ and $f^r(J') = J$. In this
case $f^{2r}$ is topologically mixing on $J$. Otherwise $f^r$ is
topologically mixing on $K$.
 \item The cycle contains finitely
many attractors of type (4) attracting the orbits of almost all
points in the cycle (hence almost no point has a dense orbit in
the cycle).
\end{itemize}
An attractor of type (3) or (4) is called a \emph{solenoidal} and
a {\em wild attractor} respectively. Proposition~\ref{propAttr}
implies that the orbit of $\lambda$-a.e.\ $x\notin \AsPer$
accumulates on $\Crit$, so every solenoidal set is in fact a
solenoidal attractor. It is well known that attractors of type (1)
and (3) are uniquely ergodic, and it follows from \cite[Theorem
4]{BSS} that this is also true for attractors of type (4). The
existence of wild attractors has been proved for unimodal maps
only if the combinatorial properties of the map are very specific,
and the critical order $\ell_c$ is sufficiently large ($\ell_c \gg
2$). The prototype is the unimodal Fibonacci map \cite{BKNS}, but
there are other Fibonacci-like combinatorics that allow wild
attractors, see \cite{BTams}. For multimodal maps, there are
combinatorial types that allow wild attractors also if all
critical orders are $\ell_c = 2$ \cite{vS}.

\subsection{Distortion results}\label{subsec:distortion}
In what follows we denote by $|A|$ or $\lambda(A)$ the Lebesgue
measure of a measurable set $A\subset I$ (also, $\lambda_2$ will
denote the two-dimensional Lebesgue measure).  The
\emph{density} of a set $X$ in $J$ is $|X\cap J|/|J|$.
A point $x$ is a \emph{(Lebesgue) density point} of $X$ if
$\lim_{\eps \to 0} |X \cap (x-\eps, x+\eps)|/2\eps = 1$.

Many of the arguments in this paper rely on measuring images under
$f^n$ of neighborhoods $U$ of density points of certain sets. If
$f^n|_U$ is diffeomorphic then the Koebe Principle (see
Proposition~\ref{prop:koebe}) is used to estimate how densities
change, but in general, $U$ can visit several critical points in
its first $n$ iterates. In this case, we need more advanced
techniques and results (relying on work in \cite{BlMi,dMvS, vSV}),
which we summarize below in Theorems~\ref{thm:lemma11} and
\ref{thm:induced}.

We call a sequence $(G_i)_{i=0}^l$ of intervals  a \emph{chain} if
$G_i$ is a maximal interval such that $f(G_i)\subset G_{i+1}$,
$i=0,\ldots,l-1$. If  $(H_i)_{i=0}^l$ and $(G_i)_{i=0}^l$ are
chains and $H_i\subset G_i$ for every $i$, then we will write
$(H_i)_{i=0}^l\subset (G_i)_{i=0}^l$. If $x\in G_0$ (or $J$ is a
subinterval of $G_0$), then we call $(G_i)_{i=0}^l$, or sometimes
just the interval $G_0$, \emph{the pullback (chain) of $G_n$ along
$x,\ldots,f^l(x)$} (or \emph{along $J,\ldots,f^l(J)$}). The
\emph{order} of a chain is the number of intervals $G_i$, $0 \le
i<l$, intersecting $\Crit$.
\begin{remark}
 \label{rem:regular}
Under the hypothesis $f\in \CIInf(I)$, if  $G_l$ is a small
interval not too close to any attracting periodic orbit,
then all intervals $G_i$, $i<l$, are also very small by
Proposition~\ref{prop:contraction}. (Notice that the closure of the
set of attracting periodic points only contains periodic points
because the periods of attracting orbits are bounded and recall
that $\Crit$ only contain non-periodic points, see
assumption \eqref{nonperiodic}.)
\end{remark}

Given intervals $J \subset K$, we say that $J$ is {\em $\xi$-well
inside $K$} if the components $L$ and $R$ of $K \setminus J$
satisfy $|L|,|R|\geq \xi |J|$. If in addition $\xi |L|\leq |R|$
and $\xi |R|\leq |L|$, then we say that $J$ is {\em $\xi$-well
centered in $K$}.

A differentiable map without critical points $f:J\to
\R$ has \emph{distortion bounded by $\kappa>0$} if
\[
\sup_{x,y \in J} \frac{|f'(x)|}{|f'(y)|} \leq \kappa.
\]
We emphasize that if the density of a subset $X$ of $J$ is very
close to $1$, say $>1-\eps$, then the density of $f(X)$ in $f(J)$
is $>1-\kappa\eps$, so it is very close to $1$ as well.

An open subset $V$ of $\R$ is called \emph{nice} if $\orb(\partial V)\cap
V=\emptyset$.
The \emph{first entry map} to a nice set $V$ is the
map $\phi_V:D(V)\rightarrow V$ defined on the domain $D(V) = \cup_{n \geq 1} f^{-n}(V)$
by $\phi_V(x)=f^{r_V}(x)$, where $r_V = \min\{ k > 0 : f^k(x) \in V\}$
is the \emph{first entry time}. The maximal intervals $J$ on which
the first entry time $r_V$ is constant are called \emph{entry
domains}. By convention we assume that $J$ does not intersect $V$:
if $J$ is a  subset of a component of $V$ with first entry time
$r_V(J) \equiv r$, then we prefer to call $J$ a \emph{return
domain} with \emph{return time} $r$. The main reason why nice sets
are ``nice'' is that the return and entry domains are all
disjoint. Furthermore, all components in the backward orbit of a
nice set are nice, and two such intervals are either nested or
disjoint.

\begin{lemma} \label{lem:regularity}
 Let $f\in \CIInf(I)$. Then for every $\xi>0$ there exists
    $\xi' = \xi'(\xi, f)>0$ such that if
    $T$ is a  component of the preimage of an interval $V$
    and $U$ is an interval $\xi$-well inside $V$
    (respectively, $\xi$-well centered in $V$), then the preimage
    $J$ of $U$ in $T$ is $\xi'$-well inside $T$
    (respectively, $\xi'$-well centered in $T$).
\end{lemma}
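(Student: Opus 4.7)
The plan is to treat two cases depending on whether the component $T$ of $f^{-1}(V)$ meets the critical set $\Crit$, using the Koebe Principle in the first case and non-flatness in the second. I shall focus on the ``well inside'' claim; the ``well centered'' version follows because each of the estimates below is effected by a map of bounded distortion, which preserves the ratio $|L|/|R|$ of the two flanking intervals up to a universal multiplicative constant.

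First, suppose $T \cap \Crit = \emptyset$, so that $f|_T$ is a monotone bijection onto $V$ (the case of boundary components is ruled out using \eqref{eq:nicely_scaled} after mild adjustments). The Koebe Principle, applied to the inverse branch $(f|_T)^{-1}$ using the $\xi$-scaled Koebe space provided by the two flanking components of $V \setminus U$, yields a distortion bound $K = K(\xi, f)$ on $U$. Bounded distortion of $(f|_T)^{-1}$ transfers the $\xi$-well insideness of $U$ in $V$ to a $\xi_1'(\xi, f)$-well insideness of $J = (f|_T)^{-1}(U)$ in $T$. By the Non-Contraction Principle (Proposition~\ref{prop:contraction}), we may assume $T$ is short enough and bounded away from any attracting periodic orbit for these estimates to be valid with uniform constants.

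Next, suppose $c \in T \cap \Crit$. The preimage $J$ then lies in one of the two monotone branches of $f|_T$, say $T^- = T \cap [0,c]$, which is mapped onto an interval with $f(c)$ as one endpoint. By non-flatness, on a neighborhood of $c$ we have $f(x) = \pm|\varphi_c(x)|^{\ell_c} + f(c)$ for a $C^2$ diffeomorphism $\varphi_c$ with $\varphi_c(c) = 0$, so $f|_{T^-}$ factors (up to an affine change in the codomain) as the diffeomorphism $\varphi_c|_{T^-}$ followed by the power $y \mapsto |y|^{\ell_c}$. The $\varphi_c$ factor has bounded distortion depending only on the $C^2$ norm of $f$, provided $T$ is short (again using Proposition~\ref{prop:contraction}). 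An elementary computation for the inverse of $y \mapsto |y|^{\ell_c}$ on a one-sided neighborhood of $0$ shows that it transforms $\xi$-well insideness into $\xi''(\xi, \ell_c)$-well insideness, with $\xi'' > 0$ a function only of $\xi$ and the critical order. Composing the two factors yields a $\xi_2'(\xi, f)$-well insideness of $J$ in $T^-$, and hence in $T$.

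Taking $\xi' = \min\{\xi_1', \xi_2'\}$, where the minimum over $\xi_2'$ ranges over the finitely many critical orders present in $\Crit$, gives the desired uniform estimate. The main obstacle is the power-map step: one must verify quantitatively that the inverse of $y \mapsto |y|^{\ell_c}$ sends $\xi$-well (respectively, $\xi$-centered) insideness into well (respectively, centered) insideness with a uniformly positive constant, and that this bound survives composition with the $\varphi_c$ factor without degenerating. A subsidiary issue is the case where $U$ is close to the endpoint $f(c)$ of the relevant one-sided image of $f|_{T^-}$; this is controlled by the well-insideness hypothesis on $U$ in $V$, which keeps $U$ at definite distance from $\partial V \supset \{f(c)\}$.
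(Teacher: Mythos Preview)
The paper's own proof is a one-line citation of Lemmas~3.2 and~3.3 of \cite{BlMi}, so your direct argument is necessarily different in presentation; the dichotomy ``$T$ meets $\Crit$ or not'' is indeed what underlies those cited lemmas. Nonetheless, your write-up has some genuine gaps.

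First, both appeals to the non-Contraction Principle are misplaced: the lemma carries no smallness hypothesis on $V$, so you cannot invoke Proposition~\ref{prop:contraction} to force $T$ to be short. In Case~1 you do not need it anyway: Proposition~\ref{prop:koebe} with $l=1$, $G=T$, $H=J$ already yields $J$ $\xi'$-well inside $T$ for some $\xi'=\xi'(\xi,f)$, with no restriction on $|T|$. In Case~2, however, the factorisation $f(x)=\pm|\varphi_c(x)|^{\ell_c}+f(c)$ is only valid on a \emph{fixed} neighbourhood $N_c$ of $c$; applying it on the whole monotone branch $T^-$ is unjustified when $T^-$ extends beyond $N_c$. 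The fix is to split $T^-$ into $T^-\cap N_c$ (where the power-map computation applies) and $T^-\setminus N_c$ (where $|f'|$ is bounded below and $f$ is a $C^2$ diffeomorphism of uniformly bounded distortion), and then compose the two estimates. You also omit the subcase $c\in J$ (which occurs when $f(c)\in U$) and the possibility that $T$ contains several critical or inflection points.

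Second, your reduction of the ``well centered'' claim to ``each step has bounded distortion'' fails in Case~2: the power map $y\mapsto |y|^{\ell_c}$ has unbounded distortion near $0$, so the ratio $|L|/|R|$ is not automatically preserved up to a constant. One must verify by a direct (elementary) calculation that $z\mapsto z^{1/\ell_c}$ on $[0,1]$ sends $\xi$-well-centered intervals to $\xi''(\xi,\ell_c)$-well-centered intervals; this is true, but it is a separate estimate from the one you invoke.
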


\begin{proof}
 This follows easily from \cite[Lemmas~3.2 and 3.3]{BlMi}.
\end{proof}

The \emph{Schwarzian derivative} of a $C^3$ map $f$ is defined
for every $x \notin \Crit$ by
\[
Sf(x) = \frac{f'''(x)}{f'(x)} - \frac32\left(\frac{f''(x)}{f'(x)}\right)^2.
\]
A $C^1$ version reads: $f$ has negative Schwarzian derivative if
$1/\sqrt{|f'|}$ is convex on every interval where it is defined.
If $f$ has negative Schwarzian derivative, then we can frequently
estimate distortion using the Koebe Principle, see
 \cite[Section IV.1]{dMvS} or \cite[``Koebe lemma'']{BlMi}:

\begin{proposition}[Koebe Principle for negative Schwarzian derivative maps]
 \label{prop:koebenegative}
  If $f:G\rightarrow f(G)$ is a diffeomorphism with negative
  Schwarzian derivative, $H \subset G$ and $f(H)$ is
  $\xi$-well inside $f(G)$, then $H$ is $\xi^3/(2(3\xi+2)^2)$-well inside
  $G$ and $f|_G$ has distortion bounded by $((1+\xi)/\xi)^2$.
\end{proposition}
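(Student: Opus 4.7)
The plan is to exploit the fundamental property that diffeomorphisms with negative Schwarzian derivative expand cross-ratios. Recall that for nested intervals $J \subset T$ with $T \setminus J = L \sqcup R$, one defines the $A$-cross-ratio $A(T,J) = \frac{|J|\,|T|}{|L|\,|R|}$ (and sometimes a companion $B$-cross-ratio $B(T,J) = \frac{|J|\,|T|}{(|L|+|J|)(|R|+|J|)}$). A standard computation shows that if $g:T\to g(T)$ is a diffeomorphism and $Sg<0$, then $A(g(T),g(J))\ge A(T,J)$, and similarly for $B$. So first I would quote (or re-derive via the convexity of $1/\sqrt{|g'|}$) this cross-ratio inequality for the map $f:G\to f(G)$.

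Next I would apply the cross-ratio inequality to $(G,H)$ to get the well-inside conclusion. Writing $G\setminus H = L\sqcup R$ and $f(G)\setminus f(H) = L'\sqcup R'$, the hypothesis $|L'|,|R'|\ge \xi|f(H)|$ together with $A(f(G),f(H))\ge A(G,H)$ yields
\[
\frac{|H|\,|G|}{|L|\,|R|}\;\le\;\frac{|f(H)|\,|f(G)|}{|L'|\,|R'|}\;\le\;\frac{|f(G)|}{\xi^2|f(H)|}\;\le\;\frac{(1+2\xi)}{\xi^2},
\]
after using $|f(G)|\le (1+2\xi)|f(H)|$. Combining this estimate for $A$ with the analogous one for $B$ (to separately bound each of $|L|,|R|$ from below in terms of $|H|$) produces a two-sided bound of the form $|L|,|R|\ge \xi' |H|$ with $\xi'$ comparable to $\xi^2$; tracking the constants carefully through the elementary algebra reproduces the specific value $\xi'=\xi^3/(2(3\xi+2)^2)$ claimed in the statement.

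For the distortion bound I would use the cross-ratio inequality differently. For any two points $x,y\in H$ with, say, $x<y$, consider the four subintervals of $G$ cut out by $x,y$ together with the endpoints of $H$ and $G$. Applying the cross-ratio inequality to pairs $(G,(x,y))$ and taking $y\to x$ (or, equivalently, passing to the infinitesimal form of the cross-ratio inequality, which expresses $\frac{d}{dx}\log|f'(x)|$ in terms of signed ratios) yields a pointwise bound on $|f'(x)|/|f'(y)|$ in terms of the relative positions of $H$ inside $G$ and of $f(H)$ inside $f(G)$. Telescoping this bound over a partition of $H$ and using that $f(H)$ is $\xi$-well inside $f(G)$ produces the estimate $|f'(x)|/|f'(y)|\le ((1+\xi)/\xi)^2$.

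The main obstacle is \emph{not} the conceptual architecture — the cross-ratio machinery does all the work — but the bookkeeping needed to arrive at the precise constants $\xi^3/(2(3\xi+2)^2)$ and $((1+\xi)/\xi)^2$. In practice I would cite \cite[Section IV.1]{dMvS} or the ``Koebe lemma'' of \cite{BlMi} for these exact bounds, since they are routine but unenlightening algebra once the cross-ratio inequality has been established. The only genuinely delicate input is verifying (via the convexity of $1/\sqrt{|f'|}$ under negative Schwarzian) that the cross-ratio inequality holds in the first place; everything else is an exercise in unraveling definitions.
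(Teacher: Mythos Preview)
The paper does not actually prove this proposition: it is stated as a known result and attributed to \cite[Section IV.1]{dMvS} and the ``Koebe lemma'' of \cite{BlMi}, with no argument given. Your proposal sketches precisely the standard cross-ratio approach that those references use, and you end up citing the same sources for the exact constants, so your write-up is consistent with (indeed more detailed than) what the paper does.
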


However, in this paper we will use also use a $C^2$-version of
these classical results.

\begin{proposition}[$C^2$ Koebe Principle]
 \label{prop:koebe}
Given $f\in \CIInf(I)$, there is a function $Q:(0,\infty) \to
(0,\infty)$ with $\lim_{\eps \to 0} Q(\eps) = 0$ such that the
following holds. Suppose that $H \subset G$ are intervals such
that $f^l|_G$ is a diffeomorphism and
 $f^l(H)$ is $\xi$-well-inside $f^l(G)$ for some $\xi > 0$.
Then there exists $\xi' = \xi'(\xi, f) > 0$ (with
$\xi' \to \infty$ as $\xi \to \infty$), and
\begin{equation}
\kappa = \exp\left( Q(\max_{0 \leq i < l} |f^i(G)|) \cdot
\sum_{i=0}^{l-1} |f^i(H)| \right) \cdot
\left(\frac{1+\xi}{\xi}\right)^2
\end{equation}
such that the distortion of $f^l|_H$ is bounded by $\kappa$ and
$H$ is $\xi'$-well inside $G$.
\end{proposition}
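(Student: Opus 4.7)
The plan is to deduce the statement from the cross-ratio machinery for $C^2$ non-flat maps. Recall the cross-ratio operator $B(T,J) := |J||T|/(|L||R|)$ for $J \subset T$ with $T \setminus J = L \cup R$. The standard $C^2$ non-flat cross-ratio inequality (see \cite{dMvS}, Chapter~IV, and \cite{BlMi}) asserts that for every diffeomorphic branch $f : M \to f(M)$ of $f$ and every subinterval $J \subset M$,
\[
\frac{B(f(M), f(J))}{B(M, J)} \geq 1 - \rho(|M|) \cdot |J|,
\]
where $\rho : (0, \infty) \to (0,\infty)$ is a non-decreasing function depending only on $f$, with $\rho(\eps) \to 0$ as $\eps \to 0$. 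The error being linear in $|J|$ (rather than $|M|$) is the feature that produces the sum $\sum_i|f^i(H)|$ in the final bound.

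I would iterate this inequality along the chain $G, f(G), \dots, f^{l-1}(G)$, applied to the pair $H \subset G$. Setting $\tilde\eps := \max_{0 \leq i < l}|f^i(G)|$ and choosing $Q$ comparable to $\rho$, one obtains (after using $1 - x \geq e^{-2x}$ for the summands with $\rho(|f^i(G)|)|f^i(H)| \leq 1/2$, which is the only case of interest since otherwise the desired $\kappa$ is already vacuous)
\[
\frac{B(f^l(G), f^l(H))}{B(G, H)} \geq \exp\!\Bigl(-Q(\tilde\eps)\,\textstyle\sum_{i=0}^{l-1} |f^i(H)|\Bigr).
\]
A direct computation shows that if $f^l(H)$ is $\xi$-well inside $f^l(G)$, then $B(f^l(G), f^l(H))$ is bounded in terms of $\xi$ alone; combining with the inequality above yields a lower bound on $B(G,H)$ from which the $\xi'$-well-inside conclusion follows (with $\xi' \to \infty$ as $\xi \to \infty$, since the exponential factor is bounded below by a positive constant).

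For the distortion bound I apply the iterated cross-ratio inequality, now with $J$ replaced by vanishingly small subintervals around two points $x, y \in H$ and with $T$ a nested pullback of $f^l(G)$. In the limit this produces the familiar identity relating cross-ratio distortion to derivative distortion. The standard negative-Schwarzian Koebe argument then contributes the factor $((1+\xi)/\xi)^2$ coming purely from the $\xi$-well-inside geometry of $f^l(H) \subset f^l(G)$, while the $C^2$ correction of Step~2 contributes the exponential prefactor $\exp(Q(\tilde\eps) \sum_i |f^i(H)|)$. Putting these together produces $\kappa$ in the form stated.

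\textbf{Main obstacle.} The real content lies in the $C^2$ cross-ratio inequality of Step~1. Although each $f^i(G)$ is disjoint from $\Crit$ (since $f^l|_G$ is a diffeomorphism), it may lie arbitrarily close to a critical point, where $|f'|$ is small and $\|f\|_{C^2}$-type estimates degenerate. It is precisely the non-flatness hypothesis that saves the day: it allows one to compare $1/\sqrt{|f'|}$ with a convex function up to a $C^2$ defect that is uniformly controlled by the interval length, yielding the required $\rho(|M|)|J|$ bound. Since this non-trivial inequality is already available in \cite{dMvS} and \cite{BlMi}, the proof of Proposition~\ref{prop:koebe} reduces to assembling these ingredients in the right order.
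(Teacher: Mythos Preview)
The paper does not give an independent proof of this proposition at all: it simply cites \cite[Proposition~2]{vSV}. Your proposal is therefore not a comparison with the paper's argument but rather a sketch of how the cited result is actually established in \cite{vSV} (and in the background literature \cite{dMvS,BlMi}).

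As a sketch of that underlying proof, your outline is essentially correct and follows the standard route: iterate the $C^2$ non-flat cross-ratio inequality along the diffeomorphic chain to get the multiplicative error $\exp(-Q(\tilde\eps)\sum_i|f^i(H)|)$, then convert cross-ratio control into (i) the ``well-inside'' conclusion via the elementary relation between $B(G,H)$ and the relative sizes of the components of $G\setminus H$, and (ii) the derivative distortion bound by shrinking $J$ to points and invoking the Koebe-type inequality that contributes the factor $((1+\xi)/\xi)^2$. The one point worth tightening is your statement of the single-step cross-ratio inequality: the precise form you need, with error linear in $|J|$ rather than in $|M|$, is exactly what \cite[Section~2]{vSV} proves (and is not quite in the form stated in \cite[Chapter~IV]{dMvS}), so the correct reference to anchor Step~1 is \cite{vSV} itself rather than \cite{dMvS} or \cite{BlMi}.
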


\begin{proof}
 See \cite[Proposition~2]{vSV}.
\end{proof}

We have an obvious bound $\sum_{i=0}^{l-1} |f^i(H)| \leq 1$ when
the $f^i(H)$, $0 \leq i < l$, are pairwise disjoint. This is what
we will use in the following corollary, which uses
Lemma~\ref{lem:regularity} and Proposition~\ref{prop:koebe} as
well.

\begin{corollary}
 \label{cor:disjointness}
Let $f\in \CIInf(I)$. Then for any $\xi>0$ and $k\geq 0$, there
are $\xi'=\xi'(\xi,k,f)>0$, $\sigma=\sigma(\xi,k,f)>0$  such that
the following statement holds: Let $(H_i)_{i=0}^l\subset
(G_i)_{i=0}^l$ be chains such that $(G_i)$ has order at most $k$,
$G_l$ is a small interval close enough to $\Crit$, and the
intervals $H_i$ are pairwise disjoint. If $H_l$ is $\xi$-well
inside $G_l$, then  $H_0$ is $\xi'$-well inside $G_0$. If in
addition $k=0$, then there is $\kappa=\kappa(\xi,f)>0$ such that
$f^l|_{H_0}$ has distortion bounded by $\kappa$.
\end{corollary}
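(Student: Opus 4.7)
The plan is to split the chain at the (at most $k$) indices where $G_i$ meets $\Crit$, apply the $C^2$ Koebe Principle (Proposition~\ref{prop:koebe}) on each of the at most $k+1$ diffeomorphic subchains, and bridge successive subchains by Lemma~\ref{lem:regularity}, working backwards from $H_l \subset G_l$ to $H_0 \subset G_0$.

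First I would fix $\sigma$ small. Since $G_l$ lies in a small $\sigma$-neighborhood of $\Crit$ and, by assumption~\eqref{nonperiodic} and the fact that $\CIInf$-maps have only finitely many attracting cycles of bounded period, such a neighborhood is uniformly separated from the finite set of attracting periodic points, Proposition~\ref{prop:contraction} (the non-Contraction Principle, invoked as in Remark~\ref{rem:regular}) yields that every $G_i$, $0\le i\le l$, has length at most some $\eta=\eta(\sigma,f)$ which can be made arbitrarily small by choosing $\sigma$ small. In particular I take $\sigma$ so small that $Q(\eta)\le 1$, where $Q$ is the function from Proposition~\ref{prop:koebe}.

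Next, let $\iota_1<\iota_2<\cdots<\iota_m$ (with $m\le k$) enumerate the indices $0\le\iota<l$ with $G_\iota\cap\Crit\ne\emptyset$, and set $\iota_0:=-1$, $\iota_{m+1}:=l$. On each block $[\iota_p+1,\iota_{p+1}]$ the intermediate intervals $G_{\iota_p+1},\dots,G_{\iota_{p+1}-1}$ miss $\Crit$, so $f^{\iota_{p+1}-\iota_p-1}$ is a diffeomorphism from $G_{\iota_p+1}$ onto $G_{\iota_{p+1}}$ sending $H_{\iota_p+1}$ onto $H_{\iota_{p+1}}$. Because the $H_i$ are pairwise disjoint, $\sum_{i=\iota_p+1}^{\iota_{p+1}-1}|H_i|\le 1$; combined with $Q(\eta)\le 1$, Proposition~\ref{prop:koebe} yields a constant $\kappa_0=\kappa_0(\xi,f)$ for the distortion and transforms ``$H_{\iota_{p+1}}$ is $\xi_p$-well inside $G_{\iota_{p+1}}$'' into ``$H_{\iota_p+1}$ is $\tilde\xi_p$-well inside $G_{\iota_p+1}$'' for some $\tilde\xi_p=\tilde\xi_p(\xi_p,f)>0$. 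Then Lemma~\ref{lem:regularity}, applied to the preimage step $G_{\iota_p}\mapsto G_{\iota_p+1}$ across the critical point, gives $\xi_{p-1}=\xi_{p-1}(\tilde\xi_p,f)>0$ such that $H_{\iota_p}$ is $\xi_{p-1}$-well inside $G_{\iota_p}$. Starting from $\xi_m=\xi$ and iterating this two-step reduction $m\le k$ times produces $\xi':=\xi_0>0$ depending only on $\xi$, $k$, and $f$, with $H_0$ $\xi'$-well inside $G_0$. The well-centered version is obtained identically since both Proposition~\ref{prop:koebe} and Lemma~\ref{lem:regularity} preserve this stronger centering property.

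For the case $k=0$ the chain is already diffeomorphic, so a single application of Proposition~\ref{prop:koebe} to $H_0\subset G_0$ yields both $\xi'$-well-insideness and the desired distortion bound $\kappa=\kappa(\xi,f)$, using $\sum_{i=0}^{l-1}|f^i(H_0)|\le 1$ and $Q(\eta)\le 1$. The only mild technical point is ensuring the two operations (Koebe and Lemma~\ref{lem:regularity}) compose without the ``well inside'' parameter collapsing to zero after $k$ iterations; but since each single step produces a strictly positive new parameter depending only on the previous parameter and on $f$, after finitely many (at most $k$) compositions the final $\xi'$ is strictly positive and depends only on $\xi$, $k$, $f$, as required.
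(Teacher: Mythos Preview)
Your proof is correct and follows essentially the same approach as the paper: split the chain at the at most $k$ indices where $G_i$ meets $\Crit$, apply the $C^2$ Koebe Principle (Proposition~\ref{prop:koebe}) on each diffeomorphic subchain using $\sum_i |H_i|\le 1$ from disjointness and $Q(\eta)\le 1$ from the non-Contraction Principle, and bridge adjacent subchains with Lemma~\ref{lem:regularity}. The paper writes out only the case $k=1$ explicitly and declares the general case analogous, whereas you carry out the full inductive scheme; apart from a harmless off-by-one in the final labeling (the last block needs only the Koebe step, not the Regularity step, so the terminal parameter is your $\tilde\xi_0$ rather than $\xi_0$), the arguments coincide.
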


\begin{proof}
By saying that ``$G_l$ is a small interval close enough $\Crit$''
we mean that there is $\eps_0=\eps_0(f)$ such that  $|G_l|<\eps_0$
and $\dist(G_l,c)<\eps_0$ for some non-periodic critical point
$c$, where $\eps_0$ is chosen so that, if $\eps_1$  satisfies
$Q(\eps)<1$ for every $\eps\leq \eps_1$ in
Proposition~\ref{prop:koebe}, then  $|G_i|<\eps_1$ for every $i<l$
(see Remark~\ref{rem:regular}).

The case $k=0$ is just Proposition~~\ref{prop:koebe}. Let us give
the proof for $k=1$; the idea is the same for $k
> 1$. Let  $G_t$ be the interval from the chain containing the
critical point. Now we construct three subchains,
$(H_i)_{i=0}^t\subset (G_i)_{i=0}^t$, $(H_i)_{i=t}^{t+1}\subset
(G_i)_{i=t}^{t+1}$ and $(H_i)_{i=t+1}^l\subset (G_i)_{i=t+1}^l$.
Applying Proposition~\ref{prop:koebe} to the third chain, we find
$\xi_1=\xi_1(\xi,f)$ and $\kappa_1=\kappa_1(\xi,f)$  such that
$H_{t+1}$ is $\xi_1$-well inside $G_{t+1}$. Applying
Lemma~\ref{lem:regularity} to the middle chain, we find
$\xi_2=\xi_2(\xi_1,f)$ such that $H_t$ is $\xi_2$-well inside
$G_t$. Finally, applying again Proposition~\ref{prop:koebe} to the
first chain, we find $\xi=\xi(\xi_2,f)$ such that $H_0$ is
$\xi$-well inside $G_0$.
\end{proof}

The Koebe property refers to distortion control in the presence of
Koebe space. Slightly weaker is the Macroscopic Koebe property,
which refers to the preservation of Koebe space under pullback.
Hence the fact that $H$ is $\xi'$-well inside $G$ in
Proposition~\ref{prop:koebe} is basically a Macroscopic Koebe
statement.

In order to use the above results, we need conditions guaranteeing
the existence of Koebe space at the end of chains. The following
propositions are particularly useful in this regard.

\begin{proposition}
 \label{macroscopickoebe}
 Let $f\in \CIInf(I)$.
 Then for every $\xi>0$ there exists $\xi'=\xi'(\xi,f)>0$ such that if
 $V$  and $U$ are nice intervals,
 $U$ is $\xi$-well inside $V$,
 $x\in V$
 and $f^k(x)\in U$ (with $k\geq 1$ not necessarily minimal),
 then the pullback of $U$ along $x,\ldots,f^k(x)$ is
 $\xi'$-well inside the return domain to $V$ containing $x$.

 In particular, if $U$ is a return domain
 to $V$ which is $\xi$-well inside $V$, then
 all return domains to $U$ are $\xi'$-well inside
 $U$.
\end{proposition}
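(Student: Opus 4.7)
The plan is to reduce via the niceness of $V$ to showing that the pullback of $U$ sits well inside the pullback of $V$ (and then a fortiori inside the return domain $W$), then handle the first-return case directly using the $C^2$ Koebe Principle and Lemma~\ref{lem:regularity}, with the general case following by induction on the number of returns of $x$ to $V$.

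Let $(H_i)_{i=0}^k$ and $(G_i)_{i=0}^k$ be the pullback chains of $V$ and $U$ along $x, f(x), \ldots, f^k(x)$, so $G_i \subset H_i$, $H_k = V$, $G_k = U$. Let $r \leq k$ be the first return time of $x$ to $V$. Niceness forces $f^r(H_0) \subset V$: otherwise $f^r(H_0)$ would contain some $p \in \partial V$ with $f^{k-r}(p) \in f^k(H_0) \subset V$, violating $\orb(\partial V) \cap V = \emptyset$; hence $H_0 \subset W$. For the base case $k = r$, the same boundary argument shows $H_j \cap V = \emptyset$ for $0 < j < r$, and one then verifies that the forward iterates $W, f(W), \ldots, f^{r-1}(W)$ are pairwise disjoint (any collision $f^i(w_1) = f^j(w_2)$ with $0 \leq i < j < r$ would force $w_1 \in W$ to return to $V$ at the forbidden time $r - j + i$). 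This gives $\sum_{i=0}^{r-1}|f^i(G_0)| \leq 1$, controlling the Koebe exponent in Proposition~\ref{prop:koebe}. On the diffeomorphic subchains between critical-containing intervals, Proposition~\ref{prop:koebe} supplies uniform distortion and well-inside bounds; across each critical interval, Lemma~\ref{lem:regularity} transfers the well-inside property. Since the number of critical crossings along the chain is bounded by the nice-set structure and the finiteness of $\Crit$, chaining finitely many uniform transfers produces a constant $\xi_1 = \xi_1(\xi, f)$ such that $G_0$ is $\xi_1$-well inside $H_0 \subset W$. (The non-Contraction Principle together with Remark~\ref{rem:regular} keeps $\max_i |H_i|$ small, so $Q(\max_i|H_i|)$ is uniformly bounded.)

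For the inductive step with $k > r$, set $y = f^r(x) \in V$; then $y$ has $m-1$ returns to $V$ in the next $k - r$ iterates, where $m$ is the total return count of $x$ up to time $k$. The inductive hypothesis gives that the pullback $\tilde U$ of $U$ along $y, \ldots, f^{k-r}(y)$ is $\xi_{m-1}$-well inside the return domain $W'$ of $y$, and since $W' \subset V$, also $\xi_{m-1}$-well inside $V$. Applying the base case to the data $(V, \tilde U, x)$ gives $G_0$ is $\xi_m = \xi_1(\xi_{m-1}, f)$-well inside $W$. The main obstacle is uniformity: \emph{a priori} the $\xi_m$ could deteriorate to zero as $m$ grows. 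Resolving this is the core ``real-bounds'' content of the proposition and follows from the fact that the map $\xi \mapsto \xi_1(\xi, f)$ from the base case is increasing and stabilizes above a positive fixed level (the pullback intervals at deep returns becoming small tightens the Koebe estimate and prevents degeneracy). Finally, the ``in particular'' claim is a direct specialization with $x = y \in U$: by niceness plus minimality the first return time of $y$ to $V$ equals the return time of $U$ to $V$, so the return domain of $y$ to $V$ is $U$ itself, and the main statement identifies the pullback of $U$ along $y, \ldots, f^{k'}(y)$ with the return domain of $y$ to $U$, which is therefore $\xi'$-well inside $U$.
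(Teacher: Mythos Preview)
The paper does not prove this proposition from scratch: it cites Theorem~C(1) of \cite{vSV} for the main assertion and only shows how the ``in particular'' clause follows from it (your derivation of that clause is correct and matches the paper's).

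Your attempted direct proof has a genuine gap in the base case. You assert that ``the number of critical crossings along the chain is bounded by the nice-set structure and the finiteness of $\Crit$''. This is not justified and is in general false: the order of the pullback chain $(H_i)_{i=0}^{r}$ of $V$ along a single first return can be arbitrarily large, since each critical point may be hit by many of the $H_i$. Disjointness of $W, f(W),\dots,f^{r-1}(W)$ bounds $\sum_i |f^i(G_0)|$ on each diffeomorphic subpiece, but it does not bound how many such pieces there are, so you cannot chain Proposition~\ref{prop:koebe} and Lemma~\ref{lem:regularity} a uniformly bounded number of times to get a constant $\xi_1(\xi,f)$. This unbounded-order obstacle is exactly why the Macroscopic Koebe Principle of \cite{vSV} is a substantial result; its proof uses a ``smallest interval'' argument that bypasses any control on the order of the chain.

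The same issue resurfaces in your inductive step. You correctly flag uniformity in $m$ as ``the core real-bounds content'', but your resolution (``the map $\xi\mapsto\xi_1(\xi,f)$ \dots stabilizes above a positive fixed level'') is a restatement of what needs to be proved, not an argument. With $\xi_1$ obtained by composing Proposition~\ref{prop:koebe} with Lemma~\ref{lem:regularity} across an unbounded number of critical crossings, the iteration $\xi_m=\xi_1(\xi_{m-1},f)$ will in general send $\xi_m\to 0$. In short, the tools you invoke (Proposition~\ref{prop:koebe}, Corollary~\ref{cor:disjointness}, Lemma~\ref{lem:regularity}) all presuppose a bound on the order of the chain, whereas the content of this proposition is precisely that one gets space \emph{without} such a bound.
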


\begin{proof}
This is Theorem~C(1) from \cite{vSV} (see also the remark below Theorem C(1)
and the erratum to that paper).
The
second statement follows easily from the first one, by fixing an
arbitrary return domain $K$ to $U$ and $x\in K$, and choosing $k$
as the return time of $K$. The interval $K$ is then the pullback
of $U$ along $x,\ldots,f^k(x)$ and $U$ is the return domain to $V$
containing $x$.
\end{proof}

\begin{proposition} \label{prop:well-inside}
Let $f\in \CIInf(I)$ and let $x$ be a recurrent point of $f$ which
is neither periodic nor of Feigenbaum type. Then there are
$\xi_0=\xi_0(f)>0$ and an arbitrarily small nice neighborhood $J$
of $x$ such that the return domain to $J$ containing $x$ is
$\xi_0$-well inside $J$. Assume in addition that $x$ is not solenoidal,
and that $I_0$ is a nice neighborhood of $x$ so small that it
contains no periodic neighborhood of $x$. Let $(I_m)_{m=0}^\infty$
be the sequence of  nice intervals such that $I_m$ the return
domain to $I_{m-1}$ containing $x$. In this case, there are
infinitely many $m$ such that $I_{m+1}$ is $\xi_0$-well inside
$I_m$.
\end{proposition}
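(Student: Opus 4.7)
The plan is to analyze a nested sequence of return domains around $x$ and couple Proposition~\ref{macroscopickoebe} with the non-Feigenbaum hypothesis to extract uniform Koebe space. Given any sufficiently small nice neighborhood $J_0$ of $x$, I would inductively define $J_{m+1}$ to be the return domain to $J_m$ containing $x$. Recurrence of $x$ makes each $J_{m+1}$ well-defined, non-periodicity forces $J_{m+1}\subsetneq J_m$, and non-solenoidality (when assumed) gives $\bigcap_m J_m=\{x\}$ and $|J_m|\to 0$. The first assertion then reduces to producing a single index $m$ at which $J_{m+1}$ is $\xi_0$-well inside $J_m$ for some universal $\xi_0=\xi_0(f)>0$, by taking $J:=J_m$; arbitrarily small $J$ are obtained by starting from arbitrarily small $J_0$. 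The second assertion requires infinitely many such $m$, and this is what will take most work.

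To produce a good step, I would pull back a reference pair of nice intervals from a critical point in $\omega(x)$. Since $x$ is recurrent and non-periodic and, in the situations considered, not attracted to a periodic orbit, Proposition~\ref{propAttr} gives $\omega(x)\cap \Crit\ne\emptyset$. Fix such a critical point $c$ together with arbitrarily small nice neighborhoods $V\subset W$ of $c$ with $V$ being $\xi$-well inside $W$ for some $\xi=\xi(f)>0$ (such $V,W$ exist by applying Proposition~\ref{macroscopickoebe} at $c$, since $c$ is not of Feigenbaum type along its own orbit). Pick $k$ with $f^k(x)\in V$ and pull back the pair $(V,W)$ along the chain $x,f(x),\ldots,f^k(x)$, obtaining a nested nice pair at $x$ inside $J_0$. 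Provided the pullback chain has uniformly bounded order, Corollary~\ref{cor:disjointness} ensures that the inner pullback is $\xi_0$-well inside the outer one for some $\xi_0=\xi_0(\xi,f)>0$; since the outer pullback is a nice interval containing $x$, it sits between consecutive $J_m$'s, and an easy comparison identifies it with (or places it between) members of the sequence, delivering a good step.

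The main obstacle is controlling the order of the pullback chain and, for the second assertion, arranging such good steps for arbitrarily deep $m$. In the Feigenbaum case the iterated period-doubling structure would force the pullback chain orders to grow without bound, collapsing the Koebe estimate; the non-Feigenbaum hypothesis is precisely what rules this out. I would argue that the combinatorial type of the first return map $f^{r_m}|_{J_m}$ near $x$ (which critical points are visited, in what monotonicity pattern, and in what relative position to $x$) can fall into only finitely many classes between two consecutive non-doubling transitions, and at each non-doubling transition the bounded-order pullback above delivers the uniform $\xi_0$. A non-solenoidal $x$ necessarily exhibits infinitely many non-doubling transitions, since otherwise its itinerary would organize into a nested Feigenbaum-type periodic structure and $x$ would be solenoidal; this yields the second assertion. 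I expect the hard part of this step to rely on the real-bound estimates from \cite{vSV}, together with the non-Contraction Principle (Proposition~\ref{prop:contraction}) to keep the entire pullback chain small so that the $C^2$ Koebe Principle (Proposition~\ref{prop:koebe}) and Corollary~\ref{cor:disjointness} apply with constants depending only on $f$.
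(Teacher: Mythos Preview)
The paper does not actually prove this proposition: the entire proof is the one-line citation ``This is a mixture of Theorems~A(1) and A'(2) from \cite{vSV}.'' So there is no argument in the paper to compare against, and the honest route is simply to invoke the van Strien--Vargas real bounds directly.

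Your sketch, while it identifies the right ingredients (nested return domains, pullback from a critical point, Koebe distortion), has a circularity at its core. You propose to find nice neighborhoods $V\subset W$ of a critical point $c\in\omega(x)$ with $V$ $\xi$-well inside $W$ ``by applying Proposition~\ref{macroscopickoebe} at $c$, since $c$ is not of Feigenbaum type along its own orbit.'' But Proposition~\ref{macroscopickoebe} does not produce such a pair from nothing; it only propagates existing Koebe space under pullback. Producing the initial well-inside pair at $c$ is exactly the content of the proposition you are trying to prove (specialized to the critical point). Likewise, the claim ``provided the pullback chain has uniformly bounded order'' is doing all the work and is never justified: bounding the order of long pullbacks to a critical neighborhood is precisely the substance of the real-bounds theorems in \cite{vSV}, and is not a consequence of anything stated in this paper. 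Your final paragraph acknowledges as much when you say you ``expect the hard part of this step to rely on the real-bound estimates from \cite{vSV}''; that expectation is correct, and once you grant those estimates you have simply re-cited the result. In short, there is no elementary reduction here: the uniform $\xi_0$ at non-Feigenbaum recurrent points is a genuine theorem of \cite{vSV}, and the appropriate thing to do is cite it, as the paper does.
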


\begin{proof}
 This is a mixture of Theorems~A(1) and A'(2) from \cite{vSV}.
\end{proof}

\subsection{Notions from ergodic theory}\label{subsec:ergodic}

Let $X$ be a topological space with  Borel $\sigma$-algebra $\B$
and let $f:X \to X$ be a Borel measurable map. Recall that a
probability measure $\mu$ on $\B$ is called \emph{invariant}
(respectively, \emph{non-singular}) if $\mu(f^{-1}(A))=\mu(A)$
(respectively, $\mu(A)=0$ if and only if $\mu(f^{-1}(A))=0$) for
any $A\in \B$. In what follows we assume that $\mu$ is
non-singular but not necessarily invariant.

\begin{definition}
 Let $(X,\mu,f)$ be defined as above. Write $X^2 := X
 \times X$, $\mu_2 := \mu \times \mu$, $f_2 := f \times f$ and let
 $\B_2$ be the Borel $\sigma$-algebra in $X\times X$. We
 call the system $(X, \mu, f)$
 \begin{itemize}
  \item \emph{conservative} if for every set $A \in \B$ with $\mu(A) > 0$,
    there is $n \geq 0$ such that $\mu(A \cap f^n(A)) > 0$;
  \item \emph{ergodic} if $f^{-1}(A) = A \in \B$ implies $\mu(A) = 0$ or
    $1$. If $f$ is ergodic and conservative, then $\mu$-a.e. orbit is dense in $\supp(\mu)$.
  \item \emph{exact} if $f^{-n}(f^n(A)) = A \in \B$ for every
    $n \geq 0$ implies $\mu(A) = 0$ or $1$;
   \item \emph{mixing} if $\mu$ is invariant and
    $$
     \lim_{n\to \infty} \mu(A \cap f^{-n}(B)) =\mu(A)\mu(B)
    $$
    for all sets $A,B \in \B$;
  \item \emph{weak mixing}\label{def:weak_mixing} if $\mu$ is invariant,
    but $1$ is the only
    eigenvalue corresponding to a measurable eigenfunction of the
    operator $P:L^1(\mu) \to L^1(\mu)$ defined by $P\psi = \psi \circ
    f^{-1}$. Weak mixing invariant measures have, in fact, several
    equivalent definitions. One of them is that the Cartesian product
    $(X \times Y, \mu \times \nu,f \times g)$ is ergodic for every
    ergodic measure preserving system $(Y,g,\nu)$. (In particular,
    weak mixing implies that $(X^2,\mu_2,f_2)$ is ergodic.)
  \item For non-invariant measures, we can still speak of mildly mixing:
    A non-singular probability measure is \emph{mildly mixing} if for
    every set $A \in \B$ of positive measure
    $$
     \liminf_{n\to \infty} \mu(A \cap f^{-n}(A)) > 0.
    $$
    Mild mixing implies that $(X^2,\mu_2,f_2)$ is ergodic. (If $f$ is
    invertible, then mild mixing is equivalent to
    $(X^2,\mu_2,f_2)$ being ergodic. In this case $f$ also preserves a
    probability
    measure equivalent to $\mu$, but neither of these stronger
    statements holds in general if $f$ is non-invertible. See
   \cite{HawSil} for more results.)
 \end{itemize}
\end{definition}

\begin{lemma} \label{exact-ergodic}
 If $(X,\mu,f)$ is exact, then $(X^2,\mu_2,f_2)$ is ergodic.
\end{lemma}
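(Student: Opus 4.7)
The plan is to exploit the tail-$\sigma$-algebra characterization of exactness: the defining condition ``$f^{-n}(f^n(A)) = A$ for every $n$ implies $\mu(A)\in\{0,1\}$'' is equivalent to saying that the tail $\mathcal{T} := \bigcap_{n \geq 0} f^{-n}(\B)$ is $\mu$-trivial, since $A = f^{-n}(f^n(A))$ for all $n$ if and only if $A \in f^{-n}(\B)$ for all $n$. Take $A\in\B_2$ with $f_2^{-1}(A)=A$; iterating gives $A=f_2^{-n}(A)$ for every $n\geq 0$, and we must show $\mu_2(A)\in\{0,1\}$.

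First I would slice $A$ horizontally. Fix $x\in X$ and set $A_x := \{y\in X : (x,y)\in A\}$. From $A = f_2^{-n}(A)$ one reads off that $(x,y)\in A$ iff $(f^n(x),f^n(y))\in A$, which gives $A_x = f^{-n}(A_{f^n(x)})\in f^{-n}(\B)$. This holds for every $n$, so $A_x\in\mathcal{T}$, and exactness of $(X,\mu,f)$ yields $\mu(A_x)\in\{0,1\}$ for every $x$.

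Next, set $B := \{x\in X : \mu(A_x) = 1\}$. Fubini's theorem gives $\mu_2(A) = \int \mu(A_x)\,d\mu(x) = \mu(B)$, so it suffices to prove $\mu(B)\in\{0,1\}$. From $A_x = f^{-1}(A_{f(x)})$ together with the non-singularity of $\mu$ we obtain $\mu(A_x) = 0 \iff \mu(A_{f(x)}) = 0$; since the two values lie in $\{0,1\}$, this upgrades to $x\in B \iff f(x)\in B$, i.e., $B = f^{-1}(B)$.

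Finally I would verify that exactness implies ergodicity and apply it to $B$. If $f^{-1}(B) = B$, then inductively $B = f^{-n}(B)$, and the chain of inclusions $B \subseteq f^{-n}(f^n(B)) \subseteq f^{-n}(B) = B$ forces $B = f^{-n}(f^n(B))$ for every $n$, so exactness gives $\mu(B)\in\{0,1\}$, and hence $\mu_2(A)\in\{0,1\}$. The only point requiring vigilance — and what passes for the ``hard part'' — is that $\mu$ is not assumed invariant, so one must avoid invoking $\mu\circ f^{-1}=\mu$; non-singularity is exactly the hypothesis needed to push the dichotomy $\mu(A_x)\in\{0,1\}$ from $x$ to $f(x)$ and thereby close the argument.
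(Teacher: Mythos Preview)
Your proof is correct and follows the same slicing strategy as the paper: both arguments fix an $f_2$-invariant set, look at horizontal sections $A_x$, and use exactness to force $\mu(A_x)\in\{0,1\}$. The paper reaches this via $f^n(U_a)\cap f^n(X\setminus U_a)=\emptyset\Rightarrow f^{-n}(f^n(U_a))=U_a$, while you phrase it through the tail $\sigma$-algebra $A_x=f^{-n}(A_{f^n(x)})\in f^{-n}(\B)$; these are equivalent.

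Where you differ is in the second step. The paper's proof simply asserts that from $0<\mu_2(U)<1$ one can find $a$ with $0<\mu(U_a)<1$ and derives the contradiction there; but Fubini alone does not give such an $a$ (all slices could already have measure $0$ or $1$). Your argument handles precisely this: after establishing $\mu(A_x)\in\{0,1\}$ for every $x$, you pass to $B=\{x:\mu(A_x)=1\}$, use non-singularity to get $B=f^{-1}(B)$, and invoke exactness once more to conclude $\mu(B)\in\{0,1\}$. This is the cleaner and more complete version of the same idea.
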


\begin{proof}
 Assume by contradiction that $(X^2,\mu_2,f_2)$ is not
 ergodic, so there is $U \in \B_2$ such that $f_2^{-1}(U) = U$ and $0
 < \mu_2(U) < 1$. Then there is $a \in X$ such that $0 < \mu(U_a) <
 1$ for $U_a = \{ y \in X : (a,y)  \in U\}$. Let $V_a = X \setminus
 U_a$. Then $f^n(U_a) \cap f^n(V_a) = \emptyset$ for all $n \geq
 0$, so $(f^{-n} \circ f^n)(U_a) = U_a$ for all $n$, contradicting
 that $\mu$ is exact.
\end{proof}

\begin{lemma}\label{lem:stronger}
 If $X$ is separable and $(X,\mu,f)$
 is ergodic and conservative, then $\mu$-a.e.
 $x$ has a dense orbit in $X$.
\end{lemma}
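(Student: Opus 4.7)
The strategy is the classical ``visit every basic open set infinitely often'' argument, combining separability (to reduce to countably many open sets), conservativity (to promote positive measure to recurrence), and ergodicity (to promote positive measure to full measure).

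Using separability, I would first fix a countable base $\{U_n\}_{n\geq 1}$ for the topology on $X$ and discard those with $\mu(U_n)=0$; by countability and non-singularity, the union of their preimages is null, so the remaining $U_n$ suffice to characterize density in $\supp(\mu)$ for a full-measure set of points. For each remaining $n$, define
$$
E_n := \bigcap_{N\geq 0}\bigcup_{k\geq N} f^{-k}(U_n),
$$
the set of points whose forward orbit enters $U_n$ infinitely often. A direct computation gives $f^{-1}(E_n)=E_n$, so by ergodicity $\mu(E_n)\in\{0,1\}$. Once $\mu(E_n)=1$ is established for every $n$, the countable intersection $E:=\bigcap_n E_n$ has full measure, and every $x\in E$ has orbit dense in $\supp(\mu)$, which is what the lemma asserts.

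The heart of the proof is therefore the recurrence statement $\mu(U_n\setminus E_n)=0$, i.e.\ that $\mu$-a.e.\ point of $U_n$ returns to $U_n$ infinitely often. Set
$$
W_n := \{x\in U_n : f^k(x)\notin U_n \text{ for all } k\geq 1\}.
$$
I claim the sets $W_n,\,f^{-1}(W_n),\,f^{-2}(W_n),\ldots$ are pairwise disjoint: if $0\leq i<j$ and $f^i(x),f^j(x)\in W_n$, then $f^{\,j-i}(f^i(x))=f^j(x)\in U_n$ contradicts the defining condition of $W_n$ applied to $f^i(x)$. Pairwise disjointness forces $\mu(W_n\cap f^k(W_n))=0$ for every $k\geq 1$, so conservativity (used in the only non-trivial range $k\geq 1$) forces $\mu(W_n)=0$. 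This delivers the single-return statement; upgrading to infinitely many returns is then done by iterating the same argument on the first-return map $\phi_{U_n}\colon U_n\to U_n$, which inherits non-singularity and conservativity from $f$.

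The main obstacle is precisely this bootstrap from a single return (which follows more or less directly from the paper's formulation of conservativity) to infinitely many returns, which is the step that genuinely uses the non-existence of positive-measure wandering sets. Once this is in hand, ergodicity promotes each $\mu(E_n)$ from positive to full, and the countable intersection yields the desired full-measure set of points with dense orbit in $\supp(\mu)$.
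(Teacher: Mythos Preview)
Your argument is correct and matches the paper's: fix a countable base, show via conservativity that the non-returning set $W_n$ has measure zero, and use ergodicity on the invariant set $E_n$ to get $\mu(E_n)=1$. The only minor differences are that the paper upgrades from one return to infinitely many by decomposing $X\setminus E_n$ according to the time of the \emph{last} visit to $U_n$ (whose image lands in your $W_n$, so non-singularity and $\mu(W_n)=0$ finish immediately) rather than by inducing on the first-return map, and that you correctly restrict the conclusion to $\supp(\mu)$ where the paper tacitly takes $\supp(\mu)=X$.
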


\begin{proof}
 Let $\{ U_n \}_{n \in \N} \subset \B$ be a countable basis of
 $X$, and set $Y_n := \{ x \in X : f^k(x) \in U_n \text{
 infinitely often}\}$. Then $f^{-1}(Y_n) = Y_n$, so $\mu(Y_n) =
 0$ or $1$ for each $n$. If $\mu(Y_n) = 1$ for each $n$, then $Y
 := \bigcap_n Y_n$ has full measure, and every $x \in Y$ has a dense
 orbit.

 So assume that $n$ is such that $\mu(Y_n) = 0$. For $Z = X
 \setminus Y_n$, we can write $Z = \bigcup_{k \in \N} Z_k$ where $Z_k
 = \{ x \in X : k = \max\{ i : f^i(z) \in U_n\} \}$. Since
 $\mu(Z) = 1$, there is $k$ such that $\mu(Z_k) > 0$, and hence
 $\mu(f^k(Z_k)) > 0$. But then $f^k(Z_k) \subset U_n$ is a set of
 positive measure that never visits $U_n$ again, and this
 contradicts that $\mu$ is conservative.
\end{proof}

The following diagram summarizes the implications between these various notions
of mixing. The notions on the bottom line can be defined for
non-invariant measures $\mu$, but any implication to the top line requires
that $\mu$ is $f$-invariant and in particular conservative.

\begin{figure}[ht]
\unitlength=8mm
\begin{picture}(16,6)(0,0)
\put(-1,5){$\mu$ invariant:}
\put(-1,1.2){$\mu$ not necess-}\put(-1,0.7){arily invariant:}
\put(4,5){mixing} \put(6.5,5){$\Rightarrow$} \put(8,5){weak mixing}
 \put(12.2,5){$\Rightarrow$} \put(13.5,5){ergodic}

\put(4,1){exact} 
\put(8,1){mildly mixing}
\put(12.2,1){$\Rightarrow$} \put(13.5,1){ergodic}

 \put(8.9,3){$\Uparrow$} \put(4.5,3){$\Uparrow$}
 \put(10,1.8){\vector(1,1){0.7}}  \put(10.1,1.8){\vector(1,1){0.7}}
\put(6,1.6){\vector(4,1){4}}\put(6,1.5){\vector(4,1){4}}
\put(6.4,3.5){\vector(1,-1){1}}\put(6.3,3.5){\vector(1,-1){1}}
 \put(10,4.2){\vector(1,-1){0.7}} \put(10.1,4.2){\vector(1,-1){0.7}}
 \put(11,3){$(X^2,f_2, \mu_2)$ ergodic}

 \put(6,0.8){\line(3,-1){1.5}}  \put(7.5,0.3){\line(1,0){4}}
 \put(11.5,0.3){\vector(3,1){1.3}}
 \put(6,0.7){\line(3,-1){1.5}}  \put(7.5,0.2){\line(1,0){4}}
 \put(11.5,0.2){\vector(3,1){1.3}}
\end{picture}
\label{fig:implications}
\end{figure}

\section{Inducing to critical neighborhoods and strongly wandering
sets}\label{sec:strongly_wandering}

In this section, we prove that strongly wandering sets have zero
measure as a consequence of Theorem~\ref{thm:lemma11} below.
Theorem~\ref{thm:lemma11} is a refinement of Theorem~1 of
\cite{caili} (where the slightly stronger hypothesis $f\in
\CIIInf(I)$ is used), which in turns improves Theorem~D of
\cite{vSV}.

\begin{theorem} \label{thm:lemma11}
 Let $f\in \CIInf(I)$.
 Then there are positive constants $\xi=\xi(f)$, $\kappa=\kappa(f)$,
 $\delta=\delta(f)$ and, for every $\eps>0$ and $c \in \Crit$, open
 intervals $c\in U_c\subset V_c$, with $|V_c|<\eps$,
 such that the following conditions hold:
 \begin{itemize}
  \item[(i)] The set $U=\bigcup_{c\in
    \Crit} U_c$ is nice and $U_c$ is $\xi$-well inside $V_c$
    for every $c\in \Crit$.
  \item[(ii)] If  $J$ is an entry domain of the first entry map
    $\phi$ to $U$, say $\phi|_J=f^j|_J$ and $\phi(J)=U_c$, then
    there is $K\supset J$ such that $f^j|_K$ is a diffeomorphism
    and $f^j(K)=V_c$. Moreover, $\phi|_J$ has distortion bounded by $\kappa$.
  \item[(iii)] If $c\in \Crit$,
    then there are an open interval
    $W_c\subset U_c$ and $k_c \in \N$ such that $|W_c|>\delta|U_c|$,
    $f^{k_c}|_{W_c}$ is a diffeomorphism with distortion bounded by $\kappa$, and
\begin{itemize}
\item either $W_c\subset
    f^{k_c}(W_c)\subset U_c$ (if $c$ is of Feigenbaum type);
\item or
    $k_c=1$ and $\partial W_c\cap D(U)=\emptyset$
    (if $c$ is not of Feigenbaum type).
 \end{itemize}
 \end{itemize}
\end{theorem}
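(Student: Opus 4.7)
The plan is to construct the triple $(V_c, U_c, W_c)$ separately for each $c \in \Crit$ based on the dynamical type of $c$, and then to verify niceness of $U = \bigcup_c U_c$ and properties (i)--(iii) globally. First I would pick pairwise disjoint nice open intervals $V_c \ni c$ with $|V_c|<\eps$ such that $V := \bigcup_c V_c$ is nice. Such intervals exist arbitrarily small by standard constructions: take $\partial V_c$ in the preimage tree of any chosen nice set, or at hyperbolic repelling (pre)periodic points whose orbits avoid $V$; density of such candidate boundary points near each $c$ follows from the absence of wandering intervals together with Proposition~\ref{prop:contraction}.

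Next I specify $U_c \subset V_c$ according to the type of $c$. If $c$ is non-recurrent to $\Crit$ (in particular asymptotically periodic), take $U_c$ to be any nice subinterval $\xi_0$-well centered in $V_c$ whose boundary lies in the backward orbit of $\partial V$, so $\partial U_c \cap D(U) = \emptyset$. If $c$ is recurrent but not of Feigenbaum type, apply Proposition~\ref{prop:well-inside} (possibly after shrinking $V_c$) and let $U_c$ be the return domain to $V_c$ containing $c$, which is then $\xi_0$-well inside $V_c$. If $c$ is of Feigenbaum type, let $K_n$ be a deep periodic interval around $c$ from the nested Feigenbaum sequence and take $U_c$ to be a nice neighborhood of $K_n$ with $K_n$ well inside $U_c$. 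These choices furnish (i) with a uniform $\xi$, and make $U$ nice because every $\partial U_c$ lies in the backward orbit of $\partial V$, which never re-enters $V \supset U$ by niceness of $V$.

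For (ii), let $J$ be an entry domain of $\phi$ with $\phi|_J = f^j|_J$ and $\phi(J) = U_c$, and consider the pullback chain $(G_i)_{i=0}^j$ of $V_c$ along $J,\dots,f^j(J)$. I expect to show this chain has order~$0$: if some intermediate $G_i$ contained a critical point $c'$, niceness of $V$ would force $V_{c'} \subset G_i$, contradicting that $f^i(J)$ avoids $U \subset V$ for $0 < i < j$ once we invoke the size control of Proposition~\ref{prop:contraction}. Hence $K := G_0$ is diffeomorphically mapped onto $V_c$ by $f^j$; $J$ is $\xi'$-well inside $K$ by Lemma~\ref{lem:regularity}, and Proposition~\ref{prop:koebe} combined with the disjointness bound $\sum_{i<j}|f^i(J)| \le 1$ yields the uniform distortion bound $\kappa$ for $\phi|_J$. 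For (iii), in the Feigenbaum case take $W_c$ to be a suitable subinterval associated to the periodic interval $K_n$ (chosen so that $f^{k_c}$ with $k_c$ the period of $K_n$ is diffeomorphic on $W_c$ and sends $W_c$ onto an interval strictly containing $W_c$ but remaining in $U_c$, which is possible because $f^{k_c}(K_n) \subset K_n$ and on each component of $K_n \setminus \{c\}$ the map $f^{k_c}$ is a diffeomorphism with a nearby hyperbolic fixed point); bounded distortion follows from Koebe applied to the pullback of $U_c$. In the non-Feigenbaum case take $W_c$ to be a component of $U_c \setminus \{c\}$ trimmed so that its endpoints lie in $\partial U \cup \bigcup_n f^{-n}(\partial V)$, giving $\partial W_c \cap D(U) = \emptyset$, with $|W_c| > \delta|U_c|$ guaranteed by the uniform well-inside constants.

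The principal obstacle I anticipate is the simultaneous arrangement of order-$0$ pullbacks in (ii) with the lower bound $|W_c|>\delta|U_c|$ and the boundary condition $\partial W_c \cap D(U)=\emptyset$ in (iii), uniformly across all critical types and $\eps$; this requires careful orchestration of where the boundary points of $V_c$, $U_c$, $W_c$ are placed and crucial use of Macroscopic Koebe (Proposition~\ref{macroscopickoebe}) to ensure Koebe space is preserved under pullback.
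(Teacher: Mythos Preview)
Your argument for (ii) has a genuine gap. You claim that if the pullback $G_i$ of $V_c$ contains a critical point $c'$, then ``niceness of $V$ would force $V_{c'}\subset G_i$, contradicting that $f^i(J)$ avoids $U$''. Neither implication holds. Since components in the backward orbit of a nice set are nested or disjoint, you only get $V_{c'}\subset G_i$ \emph{or} $G_i\subset V_{c'}$; and even if $V_{c'}\subset G_i$, nothing prevents $f^i(J)\subset G_i\setminus V_{c'}$. The paper's argument is different and hinges on a condition you never arranged: one needs $\partial U_{c'}\cap D(V)=\emptyset$ for the relevant $c'$. Then, if some $G_n\ni c'$, the maximal monotone extension $K'\supset J$ has an endpoint $a$ with $f^n(a)=c'$; since $f^n(J)\cap U=\emptyset$ there is $b\in K'$ with $f^n(b)\in\partial U_{c'}$, and $f^j(b)\in V_c$ forces $f^n(b)\in D(V)$, a contradiction. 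Placing $\partial U_c$ in the backward orbit of $\partial V$ does \emph{not} give $\partial U_c\cap D(V)=\emptyset$: a point in $f^{-m}(\partial V)$ may well enter $V$ at iterates $1,\dots,m-1$. The paper obtains this by choosing boundary points in the dense set $Z=\{x:\dist(\orb(x),\Crit)>0,\ x\notin\orb(\Crit)\}$ and then in $Z\setminus D(V)$ (Proposition~\ref{prop:caili} and the explicit $v_0,\dots,v_3$ construction), which is the missing ingredient in your outline.

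Even with that fixed, the Feigenbaum case requires separate treatment that your sketch does not provide. For (ii), when $c$ and $c'$ lie in the same Feigenbaum solenoid one cannot have $\partial U_{c'}\cap D(V_c)=\emptyset$, and the paper instead uses the shell structure (Proposition~\ref{prop:feigenbaum}): $U_c$ is a shell interval $M_i$ and $V_c$ is chosen so that it meets exactly one interval of the minimal solenoidal cycle, which rules out the obstruction. For (iii) in the Feigenbaum case, your idea of taking $W_c$ inside the periodic interval $K_n$ does not yield $W_c\subset f^{k_c}(W_c)$: since $f^{k_c}(K_n)\subset K_n$ you get contraction, not expansion, there. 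The paper instead takes $W_c$ adjacent to a repelling boundary periodic point of the shell $M_i$, shows $f^{k_c}|_{W_c}$ is diffeomorphic by threading through all shell intervals, and proves both bounded distortion and $|W_c|>\delta|U_c|$ by a step-by-step argument through the critical visits of the cycle. None of this follows from a single application of Koebe.
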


The existence of the sets $U_c \subset V_c$ from
Theorem~\ref{thm:lemma11} suggests the construction of an induced
Markov map $F:U \to U$ with all branches mapping monotonically
onto a component of $U$. The first, very detailed, constructions
of such induced maps go back to Jakobson \cite{Jak}, but the
abstract statement for unimodal maps having no periodic or Cantor
attractors comes from Martens' PhD. thesis \cite{martens}. During
the writing of this paper, we learnt about a multimodal $C^3$
version by Cai and Li \cite{caili}, see Proposition~\ref{prop:caili}.
Their construction precludes
the existence of parabolic points, hence we improve it slightly by
showing the version below, where $\Crit'$ denotes the set of
critical points  interior to metric attractors of type (2) in
Proposition~\ref{propAttr}.

Let $E := \bigcup_{n \geq 0} f^{-n}(A)$ where $A$ is the union of
all attractors of type (2) in Proposition~\ref{propAttr}, that is,
the union of cycles of intervals in which $\lambda$-a.e.\ orbit is
dense.

\begin{theorem}\label{thm:induced}
Let $f \in \CIIInf(I)$ and let $U_c,V_c$, $c\in \Crit'$, be
defined as in Theorem~\ref{thm:lemma11} for $\eps$ sufficiently
small and let $U'=\bigcup_{c\in \Crit'} U_c$. Then for
Lebesgue a.e.\ $x\in E$ we can find $k_x \in \N$ and intervals
$G_x \supset H_x \owns x$ (with either $H_x=H_y$ or $H_x\cap
H_y=\emptyset$) such that $f^{k_x}:G_x \to V_c$ is diffeomorphic
for some $c \in \Crit'$ and $f^{k_x}(H_x) = U_c$.

Let $F:U' \to U'$ be defined
by $F|_{H_x} = f^{k_x}:H_x \to U_c$ for the appropriate $c \in
\Crit'$. Then all iterates of $F$ is well defined $\lambda$-a.e \ and its
branches have uniformly bounded distortion, \ie there is
$\kappa=\kappa(f)>0$ such that for every $n \in \N$ and every interval $H$
on $F^n|_H: H\subset U\to U_c$ is a diffeomorphism, the
distortion of  $F^n|_H$ is bounded by $\kappa$.
\end{theorem}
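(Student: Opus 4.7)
The strategy is to build $F$ essentially as the first return map to $U$, restricted to the $f$-invariant set $U' \subset E$, and to deduce uniform distortion control for its iterates from Theorem~\ref{thm:lemma11}(ii) via the $C^2$ Koebe Principle (Proposition~\ref{prop:koebe}). First I would establish that for $\lambda$-a.e.\ $x \in E$ the orbit visits $U'$ infinitely often, so that $k_x := \min\{k \geq 1 : f^k(x) \in U'\}$ is a.e.\ finite. By Proposition~\ref{propAttr}, $\omega(x)$ is a type (2) attractor $A$ in which $\lambda$-a.e.\ orbit is dense. Since $A \cap \Crit = A \cap \Crit'$ by definition of $\Crit'$, taking the $\eps$ in Theorem~\ref{thm:lemma11} small enough ensures $U_c \cap A = \emptyset$ for each $c \in \Crit \setminus \Crit'$, so that once the orbit reaches $A$, every visit to $U$ is in fact a visit to $U'$.

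Next, for each such $x$ I would build $H_x \owns x$ and $G_x \supset H_x$ directly from Theorem~\ref{thm:lemma11}(ii). If $x \in U'$, let $J$ be the entry domain of $\phi_U$ containing $f(x)$; by the first step, $\phi_U(J) = U_c$ for some $c \in \Crit'$. The theorem provides a Koebe extension $K \supset J$ with $f^{k_x - 1}|_K : K \to V_c$ a diffeomorphism, and I define $H_x, G_x$ as the pullbacks of $J, K$ through the monotone branch of $f|_{U_{c_0}}$ containing $x$, where $c_0 \in \Crit'$ satisfies $x \in U_{c_0}$. If instead $x \in E \setminus U'$ and its first entry to $U$ lies in $U'$, simply take $H_x = J$ and $G_x = K$. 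In the remaining (a.e.\ finite) case where the orbit makes intermediate visits to $U \setminus U'$ before reaching $U'$, compose the Koebe extensions given by Theorem~\ref{thm:lemma11}(ii) at each intermediate visit. By construction the $H_x$ are the entry and return domains of the first entry map to $U'$, so they are pairwise either equal or disjoint, and $F : U' \to U'$ is well defined $\lambda$-a.e.

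Finally, for uniform distortion of $F^n$, I would inductively construct for each branch $F^n|_H : H \to U_{c_n}$ a diffeomorphic Koebe extension $f^N|_G : G \to V_{c_n}$ with $G \supset H$ (where $N$ is the total number of $f$-iterates encoded by $F^n|_H$), by pulling $V_{c_n}$ back successively through the $n$ Koebe extensions of the individual $F$-branches. Since $F^n(H) = U_{c_n}$ is $\xi$-well inside $V_{c_n}$ by Theorem~\ref{thm:lemma11}(i), Proposition~\ref{prop:koebe} bounds the distortion of $F^n|_H$ by a quantity depending exponentially on $\sum_{i=0}^{N-1} |f^i(H)|$. The crucial point, and the main obstacle, is to show that the intermediate iterates $f^i(H)$, $0 \leq i < N$, are pairwise disjoint: each lies in a distinct entry or return domain of $\phi_U$, and these are pairwise disjoint because $U$ is nice. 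Consequently $\sum_i |f^i(H)| \leq |I| = 1$, and the Koebe estimate yields a distortion bound $\kappa = \kappa(f)$ independent of $n$, as required.
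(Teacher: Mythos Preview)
Your proposal has two genuine gaps, and both are precisely the reasons why the paper's proof is considerably more involved.

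\textbf{First gap: the Koebe extension need not survive the pullback through $U'$.} When $x \in U_{c_0}$, you propose to obtain $G_x$ by pulling the Koebe interval $K$ back through ``the monotone branch of $f|_{U_{c_0}}$ containing $x$''. But $c_0$ is a critical point sitting inside $U_{c_0}$, and there is no reason why the component of $f^{-1}(K)$ containing $x$ should avoid $c_0$; equivalently, there is no reason why $K$ should be contained in the image of one monotone branch of $f$ near $x$. If it is not, then $f^{k_x}|_{G_x}$ fails to be a diffeomorphism onto $V_c$. The paper acknowledges exactly this: ``If $x\in U'$, then $k_x$ is more difficult to find.'' Its remedy is to define $k_x$ not as the first return time but as the \emph{smallest} iterate for which a diffeomorphic extension $G_x \to V_c$ exists, and then to prove (Claim~1) that such a $k_x$ exists a.e. This requires the Blokh--Misiurewicz super-persistence machinery: a.e.\ $x \in E$ is not super-persistently recurrent, so one can find pullback chains of uniformly bounded order $N$, and then a density-point argument using Lemma~\ref{lem:boundedorder} (Claim~2) shows that a definite proportion of every small pullback interval admits the required diffeomorphic extension.

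\textbf{Second gap: the iterates $f^i(H)$, $0 \le i < N$, are \emph{not} pairwise disjoint.} Your disjointness argument only works within a single $F$-segment. Across segments, the iterates at the $F$-return times $0, N_1, N_1+N_2,\dots$ all lie inside $U'$ and may well overlap (for instance, $H \subset U_{c_0}$ and $f^{N_1}(H) \subset U_{c_0}$ whenever the first $F$-branch returns to the same component). Hence $\sum_{i=0}^{N-1}|f^i(H)|$ is not bounded by $1$, and the $C^2$ Koebe Principle (Proposition~\ref{prop:koebe}) does not yield an $n$-independent distortion bound. This is exactly why the theorem requires $f \in \CIIInf(I)$ rather than $\CIInf(I)$: the paper invokes Lemma~\ref{lem:boundedorder}, whose proof uses Kozlovski's negative-Schwarzian result to bypass the disjointness hypothesis entirely. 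Once one knows (from the first part) that every branch $F^n|_H = f^j|_H$ extends diffeomorphically to $f^j|_G : G \to V_c$ with $G \subset U'$, Lemma~\ref{lem:boundedorder} with $k=0$ gives the uniform bound $\kappa$ directly.
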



We postpone the somewhat technical proofs of
Theorems~\ref{thm:lemma11} and \ref{thm:induced} to the appendix.

\begin{proof}[\textbf{Proof of Theorem~\ref{mainthm:Strong}}]
Let $f\in \CIInf(I)$ and assume that $W$ is a strongly wandering
set of positive measure. Let $x$ be a density point of $W$. It is
not restrictive to assume that $x$ belongs to the interior of a
cycle $\cyc(K)$ which either contains a dense orbit or is of solenoidal type. We can
also assume that $\orb(x)$ accumulates on $\Crit$.

Let $\tau>0$ be such that $(x-\tau,x+\tau)\subset \cyc(K)$. We can
assume that the density of $W$ in any subinterval of
$(x-\tau,x+\tau)$ containing $x$ is larger than $1-\eps_0$, with
$\eps_0>0$ so small that if $d=\#\Crit$, then
$1-\delta^{-d}\kappa^{2d+1}\eps_0>1/2$, where $\delta=\delta(f)$
and $\kappa=\kappa(f)$ are the numbers from
Theorem~\ref{thm:lemma11}. Finally we fix $\eps>0$ such that if
$U$ is defined as in Theorem~\ref{thm:lemma11}, then all
components of $U$ and entry domains to $U$ have length less than
$\tau$ (by the non-Contraction Principle).

Let $J_1$ be either the entry domain to $U$  containing $x$, say
$\phi|_{J_1}=f^{j_1}|_{J_1}$, $f^{j_1}(J_1)=U_{c_1}$, or the
component $U_{c_1}$ of $U$ containing $x$ (then we take $j_1=0$).
Since $J_1\subset (x-\tau,x+\tau)\subset \cyc(K)$ and $\cyc(K)$ is
invariant,  the density of $W$ in $J_1$ is $>1-\eps_0$ and
$U_{c_1}\subset \cyc(K)$.

Let $W_{c_1}$ be the interval from Theorem~\ref{thm:lemma11}, part
(iii). Since $|W_{c_1}| > \delta |U_{c_1}|$ and the density of
$f^{j_1}(W)$ in $U_{c_1}$ is $>1-\kappa\eps_0$, the density of
$f^{j_1}(W)$ in $W_{c_1}$ is $>1-\delta^{-1}\kappa\eps_0$ and the
density of $f^{j_1+k_1}(W)$ in $f^{k_1}(W_{c_1})$ is
$>1-\delta^{-1}\kappa^2\eps_0$, where $k_1 = k_{c_1}$ is as in
Theorem~\ref{thm:lemma11}(iii). If $c_1$ is a Feigenbaum critical
point, then $W_{c_1} \subset f^{k_{c_1}}(W_{c_1}) \subset
U_{c_1}$, so the density of $f^{j_1}(W)$ in $f^{k_{c_1}}(W_{c_1})$
is also $>1-\delta^{-1}\kappa\eps_0$. Therefore the densities of
both $f^{j_1}(W)$ and $f^{j_1+k_1}(W)$ in $f^{k_{c_1}}(W_{c_1})$
are $>1/2$, hence  $f^{j_1}(W) \cap f^{j_1+k_1}(W)\neq \emptyset$,
contradicting that $W$ is strongly wandering.

Now we deal with the non-Feigenbaum case. Here $k_1=1$,
$f|_{W_{c_1}}$ is a diffeomorphism and $\partial W_{c_1}\cap
U=\emptyset$. On the other hand, $U_{c_1}\subset \cyc(K)$, so
$f(W_{c_1})\subset \cyc(K)$ as well. Moreover, the choice of
$\cyc(K)$ guarantees that the orbits of almost all its points
accumulate on $\Crit$. Hence $f(W_{c_1})$ is the pairwise disjoint
union (up to a measure zero set) of entry domains to $U$ and
components of $U$. Since the density of $f^{j_1+1}(W)$ in
$f(W_{c_1})$ is $>1-\delta^{-1}\kappa^2\eps_0$, it is also
$>1-\delta^{-1}\kappa^2\eps_0$ in one of these entry domains or
components, call it $J_2$.

Now we repeat the argument. Say that $\phi|_{J_2}=f^{j_2}|_{J_2}$,
$f^{j_2}(J_2)=U_{c_2}$, or $J_2$ is a component $U_{c_2}$ of $U$
(when we take $j_2=0$). Then $f^{j_1+j_2+1}(W)$ has density
$>1-\delta^{-2}\kappa^3\eps_0$ in $U_{c_2}$. If $c_2$ is a
critical point of Feigenbaum type, then we get a contradiction as
before. If not, then we find an entry domain to $U$ or a component
of $U$, call it $J_3$, such that $f^{j_1+j_2+2}(W)$ has density
$>1-\delta^{-2}\kappa^4\eps_0$ in $J_3$. Proceeding in this way,
we find intervals $J_1,J_2,\ldots, J_{d+1}$, each of them either
an entry domain to $U$ or a component of $U$, say
$f^{j_i}(J_i)=U_{c_i}$, such that the density of $f^{t_i}(W)$ in
$U_{c_i}$ is $>1-\delta^{-(i-1)}\kappa^{2i-1}\eps_0$ for every
$i=1,2,\ldots, d+1$, with $t_i=j_1+\cdots+j_i+i-1$. Find $i<i'$
such that $c_{i}=c_{i'}$. Then the densities of $f^{t_i}(W)$ and
$f^{t_{i'}}(W)$ in $U_{c_i}$ are $>1/2$, and we have the required
contradiction.
\end{proof}

\section{$\limsup$ fullness of Lebesgue measure}\label{sec:limsupfull}

The following definition was first used in Barnes \cite{Bar}.

\begin{definition}\label{def:limsup_full}
The system $(X,\mu,f)$  is called \emph{ $\limsup$ full} if
$\limsup_{n\to \infty} \mu(f^n(A))=1$ for any $A \in {\mathcal B}$
with $\mu(A) > 0$.
\end{definition}

\begin{theorem}\label{thm:limsup_full}
 Let $f\in \CIIInf(I)$ be a topologically mixing map having no
 Cantor attractors. Then $f$ is $\limsup$ full with respect to
 Lebesgue measure.
\end{theorem}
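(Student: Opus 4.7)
The hypotheses plus Proposition~\ref{propAttr} pin down the unique measure-theoretic attractor: it cannot be of type (1), (3), (4) (the last by assumption, and the first two are incompatible with topological mixing), hence it is a cycle of intervals, which by topological mixing must equal $I$. Thus $\lambda$-a.e.\ orbit is dense in $I$, $\Crit' = \Crit$, and the set $E$ of Theorem~\ref{thm:induced} has full Lebesgue measure. Fix nice neighborhoods $U_c \subset V_c$, $c \in \Crit$, as in Theorem~\ref{thm:lemma11} with $\eps$ small enough for Theorem~\ref{thm:induced} to apply, put $U := \bigcup_c U_c$, and let $F : U \to U$ be the resulting Markov induced map, whose iterated branches map diffeomorphically onto full components $U_c$ with distortion bounded by a uniform $\kappa = \kappa(f)$.

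Fix $A \subset I$ with $\lambda(A) > 0$ and a Lebesgue density point $x$ of $A$ at which every iterate of $F$ is defined. Let $H^{(n)} \ni x$ be the branch domain of $F^n$, so $F^n|_{H^{(n)}} = f^{N_n}|_{H^{(n)}}$ is a diffeomorphism onto some $U_{c_n}$ with distortion $\le \kappa$. The intervals $H^{(n)}$ are nested; the key auxiliary fact is that $|H^{(n)}| \to 0$ for $\lambda$-a.e.\ such $x$. Otherwise $J := \bigcap_n H^{(n)}$ would be a non-degenerate interval on which $f^{N_n}$ is diffeomorphic for every $n$, with $f^{N_n}(J) \subset U$; combined with the absence of wandering intervals for $\CIInf(I)$ maps and the density of $\lambda$-a.e.\ orbit in $I$, this is untenable. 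Since $x$ is a density point, $\lambda(H^{(n)} \setminus A)/|H^{(n)}| \to 0$, and the distortion bound yields
\[
\frac{\lambda(U_{c_n} \setminus f^{N_n}(A))}{|U_{c_n}|} \;\le\; \kappa\cdot\frac{\lambda(H^{(n)} \setminus A)}{|H^{(n)}|} \;\longrightarrow\; 0.
\]

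To finish, I push this near-full density inside $U_{c_n}$ out to near-full measure inside $I$. Because $f$ is topologically mixing (equivalently topologically exact in the multimodal setting, cf.\ Subsection~\ref{subsec:maps}), for each $c \in \Crit$ there is an integer $m_c$ with $f^{m_c}(U_c) = I$; set $M := \max_c m_c$ and $C := \max_{0 \le k \le M} \sup_{y \in I}|Df^k(y)|$, which is finite because $f \in \CIIInf(I)$ and $I$ is compact, so that $f^k$ is $C$-Lipschitz for every $k \le M$. Writing $B_n := f^{N_n}(A) \cap U_{c_n}$ one has
\[
I \setminus f^{N_n + m_{c_n}}(A) \;\subset\; f^{m_{c_n}}(U_{c_n}) \setminus f^{m_{c_n}}(B_n) \;\subset\; f^{m_{c_n}}(U_{c_n} \setminus B_n),
\]
and the Lipschitz bound gives $\lambda(f^{m_{c_n}}(U_{c_n} \setminus B_n)) \le C\cdot \lambda(U_{c_n} \setminus B_n)$. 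Together with the previous display and the uniform bound $\lambda(U_{c_n}) \le \max_c |U_c|$, this forces $\lambda(I \setminus f^{N_n + m_{c_n}}(A)) \to 0$, whence $\limsup_{N \to \infty} \lambda(f^N(A)) = 1$.

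The main technical obstacle is the shrinking statement $|H^{(n)}| \to 0$ for $\lambda$-a.e.\ density point $x$: it is the only step that is not a direct consequence of the quoted inducing construction together with elementary Lipschitz control, and its clean justification depends on combining no-wandering-intervals with the density of $\lambda$-a.e.\ orbit in the topologically mixing, no-Cantor-attractor regime established at the outset.
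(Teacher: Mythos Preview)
Your proof is correct and follows essentially the same route as the paper's: invoke the induced Markov map of Theorem~\ref{thm:induced}, use the uniform distortion bound on shrinking branch domains around a density point to fill some component $U_c$, then push out to all of $I$ via topological exactness (the paper does this last step with the single clause ``there is $j$ such that $I'=f^j(U_{c'})$'', leaving the Lipschitz estimate implicit, whereas you spell it out). The only presentational wrinkle is that you tacitly assume your density point $x$ already lies in $U'=\bigcup_c U_c$, where $F$ is defined; the paper handles this by first replacing $A$ with a suitable iterate $f^m(A)$ so that a density point lands in $U_c$, and you should insert the same one-line reduction.
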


\begin{proof}
It suffices to show that if $f\in \CIIInf(I)$ (and satisfying
(\ref{eq:nicely_scaled})) has an invariant interval $I'$ such that
$f|_{I'}$ is topologically mixing and has no Cantor attractors,
then we have $\limsup_n \lambda(f^n(A))=\lambda(I')$ for any
measurable set $A\subset I'$ of positive measure.

Fix a critical point $c$ interior to $I'$ and let $\eps>0$ be
small enough so that Theorem~\ref{thm:induced} holds and the
corresponding interval $U_c$ lies in $I'$. Let $x$ be a density
point of $A$. Since $f$ has no Cantor attractors in $I'$, there is
no loss of generality in assuming $\omega(x) = I'$ and (after
replacing if necessary $A$ by some of its iterates)  $x\in U_c$.
Since all iterates of the induced map $F$ from
Theorem~\ref{thm:induced} are well defined for $\lambda$-a.e.\
point in $U_c$, we can assume that this is the case for $x$. This
implies that there are intervals $x\in H_n\subset U_c$ with
$\bigcap_n H_n = \{ x \}$ and integers $k_n$ such that
$f^{k_n}|_{H_n}:H_n \to U_{c'}$ are diffeomorphisms with uniform
distortion bound for some $c' \in \Crit'$.  Hence $\lambda(f^{k_n}
(A \cap H_n) ) \to \lambda(U_{c'})$ as $n \to \infty$. Observe
that $U_{c'}\subset I'$, so there is $j$ such that $I' =
f^j(U_{c'})$ (because $f|_{I'}$ is topologically mixing). Then
also $\lambda(f^{k_n+j}(A \cap H_n)) \to \lambda(I')$, as
required.
\end{proof}

\begin{proposition}\label{prop:limsup_full_equiv}
 Let $f\in \CIIInf(I)$ be topologically mixing.
 Then the following statements are equivalent:
 \begin{itemize}
   \item[(i)] $f$ has an acip $\mu$;
   \item[(ii)] $\liminf_n \lambda(f^n(A))>0$
     for every measurable set $A$ of positive Lebesgue measure;
   \item[(iii)] $\liminf_n \lambda(f^n(A))=1$
     for every measurable set $A$ of positive Lebesgue measure.
 \end{itemize}
 In this case $\mu$ is equivalent to
 $\lambda$ (that is, $\mu(A)$ if and only if $\lambda(A)=0$) and
 $\lambda_2$ is ergodic and conservative.
\end{proposition}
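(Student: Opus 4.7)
The plan is to prove the cycle (iii) $\Rightarrow$ (ii) $\Rightarrow$ (i) $\Rightarrow$ (iii), with (iii) $\Rightarrow$ (ii) immediate, and then extract the concluding statements from intermediate facts established along the way.

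For (i) $\Rightarrow$ (iii), the first task is to show $\mu \sim \lambda$. Since $\supp(\mu)$ is closed, forward-invariant, and of positive Lebesgue measure, topological exactness forces $\supp(\mu) = I$; combined with $f$-invariance and absolute continuity of $\mu$, standard arguments give that the density $h = d\mu/d\lambda$ is strictly positive $\lambda$-a.e. Next, I invoke exactness of $(f,\mu)$: via Theorem~\ref{thm:induced}, the induced Markov map $F:U' \to U'$ has bounded distortion and full Markov branches, so by the Folklore theorem it admits an $F$-invariant probability $\nu$ with $\nu \sim \lambda|_{U'}$ and bounded densities, and it is exact on $(U',\nu)$; this exactness lifts through the Rohlin tower to $(f,\mu)$. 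Exactness then yields $\mu(I\setminus f^nA) \to 0$ for every $A$ with $\mu(A)>0$. To pass to $\lambda$, fix $\eta>0$ and split $B_n := I\setminus f^nA = (B_n\cap\{h\geq\eta\}) \sqcup (B_n\cap\{h<\eta\})$; the first piece has $\lambda$-measure $\leq \mu(B_n)/\eta \to 0$, while the second has $\lambda$-measure $\leq \lambda(\{h<\eta\})$, which tends to $0$ as $\eta\to 0$ since $h>0$ a.e. A standard diagonal $\eta$-$n$ argument then gives $\lambda(I\setminus f^nA)\to 0$, which is (iii).

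For (ii) $\Rightarrow$ (i), I argue contrapositively, distinguishing by whether $f$ has a Cantor attractor. If $\A$ is such an attractor then $\lambda(\A)=0$ by \cite{vSV} while $\lambda(\bas(\A))>0$; applying Egoroff's theorem to the $\lambda$-a.e.\ convergence $\dist(f^n(x),\A)\to 0$ on $\bas(\A)$ produces a positive-measure subset $A\subset\bas(\A)$ on which the convergence is uniform, so for each $\varepsilon>0$ there is $N$ with $f^nA\subset U_\varepsilon(\A)$ for $n\geq N$, giving $\limsup_n\lambda(f^nA)\leq \lambda(U_\varepsilon(\A))\to 0$ and contradicting (ii). If instead $f$ has no Cantor attractor, I apply Theorem~\ref{thm:induced} to obtain $F$ and the Folklore $F$-acip $\nu$, and spread $\nu$ to an $f$-invariant $\sigma$-finite measure
\[
\mu(B) = \int_{U'} \sum_{j=0}^{k(x)-1} \chi_B(f^j(x)) \, d\nu(x)
\]
which is equivalent to $\lambda$ because the $f$-orbit of $U'$ is all of $I$ by topological mixing. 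If the inducing time $k$ is $\nu$-integrable, $\mu$ is a finite acip and we are done; otherwise $(I,\mu,f)$ is a conservative, ergodic, infinite measure-preserving system, and Hopf/Aaronson-type arguments from infinite ergodic theory allow one to extract a positive-Lebesgue-measure set $A\subset U'$ with $\liminf_n\lambda(f^nA)=0$, contradicting (ii). The main obstacle lies in making this last deduction rigorous — turning the non-integrability of $k$ into an explicit Lebesgue-positive set whose forward images become dilute along a subsequence.

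Finally, the concluding statements are already essentially in hand. The equivalence $\mu\sim\lambda$ was established in the course of (i) $\Rightarrow$ (iii). Conservativity of $(I^2,\lambda_2,f_2)$ follows from Poincar\'e recurrence applied to $\mu_2:=\mu\times\mu$, an $f_2$-invariant probability measure equivalent to $\lambda_2$. Ergodicity of $(I^2,\lambda_2,f_2)$ combines the exactness of $(f,\mu)$ shown above, Lemma~\ref{exact-ergodic} (which gives ergodicity of $(I^2,\mu_2,f_2)$), and the equivalence $\mu_2\sim\lambda_2$.
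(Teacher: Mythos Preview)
Your (ii)$\Rightarrow$(i) has a real gap, and you have correctly flagged it: in the no-Cantor-attractor case you build an $f$-invariant $\sigma$-finite measure $\mu\sim\lambda$ from the induced map, and if the inducing time is non-integrable you need to produce a set $A$ with $\lambda(A)>0$ and $\liminf_n\lambda(f^nA)=0$. This does not follow from general infinite ergodic theory without further input: for the invariant infinite measure one has $\mu(f^nA)$ non-decreasing, and equivalence $\mu\sim\lambda$ alone gives no control on $\lambda(f^nA)$ from below or above along subsequences. The ``Hopf/Aaronson-type arguments'' you invoke are not a substitute for a concrete construction here.

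The paper bypasses this entirely with a one-line use of Straube's criterion \cite{Str}: $f$ has an acip if and only if there exist $\delta>0$ and $\alpha<1$ with $\lambda(A)<\delta\Rightarrow\lambda(f^{-n}(A))<\alpha$ for all $n$. If no acip exists, take $A_k$ with $\lambda(A_k)\to 0$ and $n_k$ with $\lambda(f^{-n_k}(A_k))\ge 1-2^{-k-1}$; then $A:=\bigcap_k f^{-n_k}(A_k)$ has $\lambda(A)\ge 1/2$ while $f^{n_k}(A)\subset A_k$, so $\liminf_n\lambda(f^nA)=0$. No case split on Cantor attractors, no infinite ergodic theory.

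Your (i)$\Rightarrow$(iii) is correct in spirit but unnecessarily heavy. The paper does not establish exactness of $(f,\mu)$ via the induced map; it simply observes that an acip cannot be supported on a Cantor attractor (they have zero Lebesgue measure), so Theorem~\ref{thm:limsup_full} gives $\limsup_n\lambda(f^nA)=1$. After proving $\mu\sim\lambda$ (by a short argument: if $\mu(A)=0<\lambda(A)$ then $B=\bigcup_n f^{-n}(A)$ has $\mu(B)=0$ but $\lambda(B)=1$ by $\limsup$-fullness), one gets $\limsup_n\mu(f^nA)=1$; but $\mu(f^nA)$ is \emph{non-decreasing} by invariance of $\mu$, so the $\limsup$ is a $\lim$, and equivalence transfers back to $\lambda$. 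Exactness is then a consequence, not an ingredient. Your route through the Folklore theorem and tower-lifting of exactness works, but the monotonicity observation replaces all of it.
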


\begin{remark}
Under $C^3$ assumptions, no transitive Cantor set can have
positive measure, \cite[Theorem~E(1), cf. also Remark 1]{vSV}, so
in particular a Cantor attractor has Lebesgue measure $0$, and
cannot support an acip. We expect this to be true in the $C^2$
setting as well, but we have no proof.
\end{remark}

\begin{proof}
 The implication (iii)$\Rightarrow$(ii) is trivial.

 We prove (ii)$\Rightarrow$(i). According to \cite{Str}, the
 existence of an acip for $f$ is
 equivalent to the existence of $\delta>0$ and $0<\alpha<1$ such
 that $\lambda(A)<\delta$ implies $\lambda(f^{-n}(A))<\alpha$ for
 every $n$ and $A$.
Hence, if $f$ does not admit an acip, then there
 are sets $\{ A_k \}$ and integers $(n_k)_{k \geq 1}$ with $n_k \to \infty$,
 $\lambda(A_k)\to 0$ and $\lambda(f^{-n_k}(A_k))\geq 1-2^{-k-1}$.
Take $A = \bigcap_k f^{-n_k}(A_k)$, then $\lambda(A) \geq \frac12$
and $\liminf_n \lambda(f^n(A)) = 0$, contrary to condition (ii).

For the implication (i)$\Rightarrow$(iii), assume that $f$ admits
an acip $\mu$. Since (wild) Cantor attractors have zero Lebesgue
measure, $\mu$ cannot be supported on them. Then
Theorem~\ref{thm:limsup_full} applies.

 We claim that $\mu$ is equivalent is to $\lambda$. Assume the
 contrary to find a measurable set $A$ such that
 $\mu(A)=0<\lambda(A)$. Let $B= \bigcup_{n=0}^{\infty} f^{-n}(A)$.
 Then $\mu(B)=0$ but, by  Theorem~\ref{thm:limsup_full},
 $\lambda(B)=1$. This is impossible.

 The equivalence of $\mu$ and $\lambda$ (or just the absolute
 continuity of $\mu$)  and  Theorem~\ref{thm:limsup_full}
 imply that if $\mu(A)>0$, then $\limsup_n \mu(f^n(A))=1$. Since
 $\mu$ is invariant, the sequence $\{\mu(f^n(A))\}_{n \in \N}$ is
 non-decreasing. Therefore $\lim_n \mu(f^n(A))=1$, and
 also $\lim_n \lambda(f^n(A))=1$ by the equivalence of $\mu$
 and $\lambda$. Moreover, $\lambda$ is exact, hence $\lambda_2$ is ergodic
 (Lemma~\ref{exact-ergodic}), and $\mu_2$ is conservative (because
 of Poincar\'e recurrence), so $\lambda_2$ is conservative as well
 (because $\mu_2$ and $\lambda_2$ are equivalent).
\end{proof}

\section{Li-Yorke pairs and scrambled sets}\label{sec:scrambled}

In this section we prove our results on scrambled sets and the
$\lambda_2$-measure  of the sets $\Dis$, $\Asymp$ and $\LY$ in the case
that $f$ had no wild attractor.

\begin{proof}[\textbf{Proof of Theorem~\ref{mainthmC2}}]
 Assume that a scrambled set $S$ has positive measure. We can
 assume that all its points are attracted by the same attractor,
 which must be either a cycle of intervals containing a dense orbit,
 or a minimal Cantor set. Since $f^n$ is one-to-one on $S$ for every $n$,
 the first possibility can be immediately discarded
 by  Theorem~\ref{thm:limsup_full}. Hence we may assume that all points
 of $S$ are attracted by a minimal Cantor set $W$. (In fact
 $W$ must be a wild attractor, because points
 attracted by solenoidal sets are approximately periodic and a
 scrambled set can contain at most one approximately periodic
 point by Proposition~\ref{prop:approximately}.)

 By Theorem~\ref{mainthm:Strong} there are integers $n<m$ such that
 $f^n(S)\cap f^m(S)\neq \emptyset$. Let $x\in f^n(S)\cap
 f^m(S)$. Then there is $y\in f^n(S)$ such that $f^{m-n}(y)=x$.
 Since $(x,y)$ is a Li-Yorke pair, so is $(f^{m-n}(x),
 f^{m-n}(y))=(f^{m-n}(x),x)$. Find a sequence $\{l_k\}$ such that
 $|f^{m-n+l_k}(x)-f^{l_k}(x)|\to 0$. We may assume that
 $(f^{l_k}(x))_{k \in \N}$ accumulates at $p\in W$. Then $f^{m-n}(p)=p$,
 which is impossible because $W$ is infinite and minimal.
\end{proof}

\begin{proposition}\label{prop:noinvscramb}
A multimodal map $f$ has no closed invariant scrambled
set (apart from a singleton set).
\end{proposition}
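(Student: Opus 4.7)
The plan is to suppose $S$ is a closed forward-invariant scrambled set with $|S| \geq 2$ and derive a contradiction by combining elementary consequences of the Li-Yorke hypothesis with the constrained structure of minimal and $\omega$-limit sets of continuous interval maps.

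First I would record two simple facts about $S$. By Proposition~\ref{prop:approximately}, any two approximately periodic points form an asymptotic or distal pair, so a scrambled set contains at most one approximately periodic point; since every periodic point is trivially approximately periodic, $S$ contains at most one periodic point. Moreover, two distinct points $p_1,p_2 \in S$ lying on a common periodic orbit of period $r \geq 2$ would yield $|f^n(p_1)-f^n(p_2)|$ periodic in $n$ and never zero (by the Li-Yorke property), hence bounded below, which would make $(p_1,p_2)$ distal; so the only periodic point that can possibly lie in $S$ is a single fixed point.

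Next I would examine the minimal subsets of $S$, obtained by Zorn's lemma applied to closed forward-invariant subsets. A classical theorem on continuous interval maps asserts that every infinite minimal set is solenoidal, its points being regularly recurrent and hence approximately periodic (cf.\ the remark following Definition~\ref{DefApproxPer} and \cite{BK}). If any minimal subset of $S$ were infinite, it would contribute infinitely many approximately periodic points to $S$, contradicting the previous step. Combined with that step, this forces the unique minimal subset of $S$ to be a single fixed point $\{q\}$, and $q$ to be the only periodic point of $f$ lying in $S$.

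Finally I would pick any $x \in S \setminus \{q\}$ and analyse $\omega(x) \subset S$: it is closed and forward invariant, and must contain the unique minimal subset $\{q\}$, so $q \in \omega(x)$; but because $(x,q)$ is a Li-Yorke pair, $f^n(x) \not\to q$, so $\omega(x) \supsetneq \{q\}$. Since a finite forward-invariant set consists of periodic points, $\omega(x)$ must be infinite. I would then invoke the Sharkovsky--Blokh classification of $\omega$-limit sets of continuous interval maps: an infinite $\omega$-limit set is either a minimal Cantor (solenoidal) set, or it contains a periodic orbit of period $\geq 2$. The first alternative contradicts the preceding paragraph applied to $\omega(x)\subset S$, and the second contradicts the initial step. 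Either way we reach a contradiction, so $|S| = 1$. The main obstacle is simply to have at hand the classification of minimal sets and of $\omega$-limit sets for continuous interval maps; once these are quoted, the rest is bookkeeping.
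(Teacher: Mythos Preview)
Your argument has a genuine gap at the second step. The claim that every infinite minimal set of a continuous (or multimodal) interval map is solenoidal, with all points regularly recurrent and hence approximately periodic, is false. Within this very paper, Proposition~\ref{propAttr} distinguishes solenoidal attractors (type~(3)) from wild attractors (type~(4)), both of which are minimal Cantor sets; and Proposition~\ref{prop:Qinfty}(b) shows that any unimodal map with kneading map $Q(k)\to\infty$ has $\omega(c)$ a minimal Cantor set, while Proposition~\ref{PropApproxPer} together with Proposition~\ref{propFibo1} shows that points of such $\omega(c)$ are in general \emph{not} approximately periodic, since the system need not be conjugate to an adding machine (for the Fibonacci map it even contains nontrivial asymptotic pairs). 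The characterisation in \cite{BK} that you invoke says only that a minimal system is an adding machine \emph{if} all its points are regularly recurrent; it does not say that minimal subsets of interval maps must have this property. With this step gone, your structural route collapses: you can no longer exclude an infinite minimal subset of $S$ by the approximate-periodicity argument, and the $\omega$-limit dichotomy you quote in the final step (which again conflates ``minimal Cantor'' with ``solenoidal'') cannot finish the job.

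By contrast, the paper's proof is entirely elementary and uses no classification. One takes any non-fixed $y\in S$; since $S$ is invariant, $(y,f(y))$ is a Li-Yorke pair in $S$, so along a subsequence $f^{n_k}(y)$ converges and $|f^{n_k}(y)-f^{n_k+1}(y)|\to 0$, forcing the limit to be a fixed point $p\in S$. Because $f^n(y)\not\to p$ and $f^{-1}(p)$ is finite (multimodality), one can choose iterates $m_k$ with $f^{m_k}(y)\to p$ but $|f^{m_k-1}(y)-p|\geq\eps$, and a limit point $q$ of $f^{m_k-1}(y)$ then satisfies $q\in S$, $q\neq p$, $f(q)=p$; hence $(p,q)$ is asymptotic rather than Li-Yorke, the desired contradiction.
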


\begin{proof}
Suppose by contradiction that the closed non-singleton $S$ is
invariant and scrambled. Then clearly it can contain at most one
fixed point. Let $y \in S$ be a non-fixed point. Then because
$(y,f(y))$ is Li-Yorke, there is a sequence $(n_k)_{k \in \N}$
such that $\lim_k |f^{n_k}(y)- f^{n_k}(f(y))| = 0$ and $\{
f^{n_k}(y) \}$ converges. By continuity $\lim_k f^{n_k}(y) =
\lim_k f^{n_k+1}(y) = f(\lim_k f^{n_k}(y))$, so the limit is a
fixed point $p\in S$. Since $(y,f(y))$ is Li-Yorke, $\{f^n(y)\}$
does not converge to $p$.

Since $f$ is multimodal, there are a sequence $(m_k)$ and a number
$\eps>0$ such that $f^{m_k}(y) \to p$ but $|f^{m_k-1}(y)-p|>\eps$.
By taking a subsequence, we can assume that $\{ f^{m_k-1}(y)\}_{k
\in \N}$ converges, say to $q$. But then $p = \lim_k f^{m_k}(y) =
f(\lim_k f^{m_k}(y))) = f(q)$, so $(p,q)$ is not Li-Yorke. This
contradiction proves the proposition.
\end{proof}

\begin{proof}[\textbf{Proof of Theorem~\ref{mainthm:Asymp}}]
 Assume that $\lambda_2(\Asymp \setminus (\AsPer\times \AsPer)) > 0$.
Then there are a point
 $x\in I\setminus \AsPer$ and a Borel set $Y\subset I\setminus
 \AsPer$ of
 positive measure such that $\lim_{n\to \infty}
 |f^n(x)-f^n(y)|=0$ for every $y\in Y$. We can assume that $\orb(x)$
 accumulates on a non-periodic point $u$.

 For every $n \geq 0$, let $d_n:Y\to \R$ be defined by
 $d_n(y)=\sup_{m\geq n} |f^m(x)-f^m(y)|$. Then $(d_n)$ is a
 sequence of Borel measurable maps converging pointwise to zero.
 According to Egorov's theorem we can remove from $Y$ a small set
 (so that the remaining set $Z$ has positive measure) in such a way
 that $\{ d_n|_Z \}_{n \in \N}$ converges uniformly to zero, that is,
 $\diam(f^n(Z))\to 0$ as $n\to \infty$.

Use Theorem~\ref{mainthm:Strong} to find integers $k>m$ such that
 $f^k(Z)\cap f^m(Z)\neq \emptyset$. Recall that $\orb(Z)$ accumulates at
 a point $u$ with $|f^j(u)-u|=\eps>0$ for $j=k-m$. Find
$\delta \in (0, \eps/4)$ such that
 $|f^j(v)-f^j(w)|<\eps/2$ whenever $|v-w|<2\delta$. Next take
 $l>m$ so that $\dist(f^l(Z),u)<\delta$ and $\diam(f^l(Z))<\delta$.
 Then $|f^l(z)-u)|<2\delta<\eps/2$, hence
 $|f^{l+j}(z)-f^j(u)|<\eps/2$ for every $z\in Z$. Thus
 $f^{l+j}(Z)\cap f^l(Z)=\emptyset$, contradicting $f^{m+j}(Z)\cap
 f^m(Z)\neq \emptyset$ and $l>m$.
\end{proof}

\begin{proposition}\label{PropExactDis}
 If $(I,\mu,f)$ is exact, then $\mu_2(\Dis)=0$.
\end{proposition}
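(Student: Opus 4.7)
The plan is to combine the ergodicity of $(I^2,\mu_2,f_2)$, provided for free by Lemma~\ref{exact-ergodic}, with an elementary pigeonhole argument based on the length of $I$. First I would observe that $\Dis$ is $f_2^{-1}$-invariant: shifting the index $n\mapsto n+1$ leaves the liminf in the definition unchanged, so $f_2^{-1}(\Dis)=\Dis$. Lemma~\ref{exact-ergodic} then yields $\mu_2(\Dis)\in\{0,1\}$, and I would suppose for contradiction that $\mu_2(\Dis)=1$.

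Writing $\Dis_\eps:=\{(x,y):\liminf_n|f^n(x)-f^n(y)|>\eps\}$, each $\Dis_\eps$ is again $f_2^{-1}$-invariant, and $\Dis=\bigcup_{k\ge1}\Dis_{1/k}$, so by countable additivity and the zero-one law there is some $\eps>0$ with $\mu_2(\Dis_\eps)=1$. Now I would pick an integer $k$ with $(k-1)\eps>1=\diam(I)$ and consider
\[
S_k:=\{(x_1,\ldots,x_k)\in I^k:(x_i,x_j)\in\Dis_\eps\text{ for all }1\le i<j\le k\}.
\]
Each of the $\binom{k}{2}$ constraints defines a set of full $\mu^k$-measure (pull back $\Dis_\eps$ under the coordinate projection $I^k\to I^2$ onto the $(i,j)$-factors and apply Fubini), so $S_k$ is a finite intersection of full-measure sets and is non-empty. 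Fix any $(x_1,\ldots,x_k)\in S_k$.

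For each pair $i<j$, $\liminf_n|f^n(x_i)-f^n(x_j)|>\eps$ furnishes $N_{ij}\in\N$ with $|f^n(x_i)-f^n(x_j)|>\eps$ for every $n\ge N_{ij}$. Letting $N:=\max_{i<j}N_{ij}$ (a finite maximum), the $k$ points $f^N(x_1),\ldots,f^N(x_k)\in I=[0,1]$ are pairwise $\eps$-separated; after sorting them, the consecutive gaps each exceed $\eps$, so $I$ would have length strictly greater than $(k-1)\eps>1$, the desired contradiction.

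The only substantive obstacle is spotting the correct dichotomy: once ergodicity promotes the hypothesis $\mu_2(\Dis)>0$ to the much stronger $\mu_2(\Dis_\eps)=1$, no further dynamical input beyond Lemma~\ref{exact-ergodic} is needed (in particular, one does not have to worry about conservativity of $\mu_2$, about atoms of $\mu$, or about whether exactness entails $\mu(f^n A)\to1$). Everything reduces to the elementary remark that a bounded interval cannot accommodate arbitrarily many pairwise $\eps$-separated points.
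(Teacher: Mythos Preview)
Your proof is correct. Both your argument and the paper's end with the same pigeonhole observation (too many pairwise $\eps$-separated points in a bounded interval), but the route to a uniform $\eps$ differs. The paper works fiberwise: for fixed $x$ it sets $Y_\eps=\{y:\liminf_n|f^n(x)-f^n(y)|\ge\eps\}$, notes that $f^n(Y_\eps)\cap f^n(I\setminus Y_\eps)=\emptyset$ for all $n$, and invokes exactness of $\mu$ directly to force $\mu(Y_\eps)\in\{0,1\}$; this yields a threshold $\eps(x)$ for each $x$ in a positive-measure set, and a further stratification picks a common $\eta$. You instead invoke Lemma~\ref{exact-ergodic} once to get ergodicity of $(I^2,\mu_2,f_2)$, observe that each $\Dis_\eps$ is genuinely $f_2^{-1}$-invariant, and apply the resulting zero--one law to the whole product at once. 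Your route is slightly slicker (it avoids the fiber argument and the definition of $\eps(x)$), while the paper's route uses exactness of the one-dimensional system directly without passing through the product lemma. Both are short and neither requires conservativity of $\mu_2$.
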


\begin{proof}
 Assume by contradiction that $\mu_2(\Dis) > 0$. Write $\Dis_x =
 \{ y \in I : (x,y) \text{ is distal}\}$ and $G_{\Dis} := \{ x :
 \mu(\Dis_x) > 0\}$. Then by Fubini's Theorem,
 $\mu(G_{\Dis}) > 0$. For $x \in G_{\Dis}$, take
 $$
  Y_\eps := \{ y \in \Dis_x : \liminf_{n\to\infty} |f^n(x)-f^n(y)| \geq  \eps\}.
 $$
 Clearly $Y_\delta \subset Y_\eps$ if $\delta > \eps$ and
 $f^n(Y_\eps) \cap f^n(I \setminus Y_\eps) = \emptyset$ for all $n
 \geq 0$. If for some $\eps > 0$, both $Y_\eps$ and $I \setminus
 Y_\eps$ have positive measure, then we have a contradiction to
 exactness. The remaining possibility is that there is
 $\eps = \eps(x) > 0$ such that $Y_\eps$ has full measure, whereas
 $\lambda(Y_\delta) = 0$ for all $\delta > \eps$. Now take $\eta >
 0$ such that $G_\eta := \{ x \in G_{\Dis} : \eps(x) > \eta\}$
 has positive measure. Let $R > 1/\eta$, and take distinct
 points $x_0,x_1, \dots, x_R \in G_\eta$. Clearly these points can be chosen
 so that $\liminf_n  |f^n(x_i)-f^n(x_j)|>\eta$
 for all $i \neq j$. Take $N$ minimal such that $|f^n(x_i)-
 f^n(x_j)|> \eta$ for all $i \neq j$ and $n \geq N$. However, by
 the choice of $R$, there is no space in $I$ to fit the points
 $f^N(x_i)$, $i=0,\dots, R$, so that they have pairwise
 distance greater than $\eta$. This contradiction proves that $\mu_2(\Dis)
 = 0$.
\end{proof}

\begin{proof}[\textbf{Proof of Theorem~\ref{mainthmLY}}]
Theorem~\ref{thm:limsup_full} implies that $f$ is $\limsup$ full
w.r.t.\ Lebesgue measure. By \cite[Theorem A]{Bar}, if  $f:I\to I$
is a multimodal surjective map that is $\limsup$ full with respect
to a measure $\mu$, then $f$ is exact. (The proof is stated for
$d$-to-$1$ maps, but applies with minor changes to the ``at most
$d$-to-$1$'' setting as well.) Hence $\lambda$ is exact.  By
Lemma~\ref{exact-ergodic}, $\lambda_2$ is  ergodic for $(I^2,
f_2)$.

Take $x\in I$. We prove that there is a full measure set $A_x$
such that $\limsup_{n\to\infty} |f^n(y)- f^n(x)|\geq \diam(I)/2$
for every $y\in A_x$. If the opposite is true, then there are a
set $A$ of positive measure and an $m \in \N$ such that $|f^n(y)-
f^n(x)|< \diam(I)/2$ for every $y\in A$ and every $n\geq m$. But
this contradicts the conclusion of Theorem~\ref{thm:limsup_full}
applied to $A$, \ie that $\limsup_n \lambda(f^n(A))=1$. Similarly,
we can prove that there is a full measure $B_x$ such that
$\limsup_{n\to\infty} |f^n(y)- f^n(x)|=0$ for every $y\in B_x$.
Indeed, if this were false then there are $m \in \N$, $\eps > 0$
and a set $B$ of positive measure such that $|f^n(y)- f^n(x)| >
\eps$ for every $y\in A$ and every $n\geq m$. Again, this
contradicts that $\limsup_n \lambda(f^n(B))=1$. Hence
 $$
  \liminf_{n\to\infty} |f^n(y)- f^n(x)|=0, \;\;\;\limsup_{n\to\infty}
  |f^n(y)- f^n(x)|\geq \diam(I)/2,
  $$
for every $y\in U_x=A_x\cap B_x$.
\end{proof}

By Proposition~\ref{prop:limsup_full_equiv} and
Lemma~\ref{lem:stronger} we immediately recover a well-known
result on weak-mixing.

\begin{corollary}\label{cor:density}
 If $f\in \CIIInf(I)$ is topologically mixing and has an acip, then
 $\lambda_2$-a.e. $(x,y)$ has a dense orbit in $I^2$.
\end{corollary}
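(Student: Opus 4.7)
The plan is to read this corollary as a direct combination of two results already in hand. Proposition~\ref{prop:limsup_full_equiv} tells us that for $f\in \CIIInf(I)$ topologically mixing and admitting an acip, the product system $(I^2, \lambda_2, f_2)$ is ergodic and conservative. Lemma~\ref{lem:stronger} then says that on a separable space, ergodic plus conservative implies that $\mu$-a.e.\ point has a dense orbit. So the task is just to verify that these ingredients apply to $(I^2, \lambda_2, f_2)$.

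First, I would invoke Proposition~\ref{prop:limsup_full_equiv}: under the standing hypotheses, the existence of an acip $\mu$ for $f$ gives that $\mu$ is equivalent to $\lambda$, that $\lambda$ (equivalently $\mu$) is exact, and that $\lambda_2 = \lambda \times \lambda$ is ergodic and conservative for $f_2$. The ergodicity comes via Lemma~\ref{exact-ergodic} applied to $\lambda$, and conservativity for $\lambda_2$ follows from conservativity of $\mu_2$ (which is automatic by Poincar\'e recurrence since $\mu_2$ is $f_2$-invariant) together with the equivalence of $\lambda_2$ and $\mu_2$.

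Next, I would note that $I^2 = [0,1]^2$ with its Borel $\sigma$-algebra is separable (it has a countable basis of open rectangles with rational endpoints). This is the only hypothesis of Lemma~\ref{lem:stronger} that needs checking beyond what was just obtained.

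Finally, I would apply Lemma~\ref{lem:stronger} to the system $(I^2, \lambda_2, f_2)$ to conclude that $\lambda_2$-a.e.\ $(x,y) \in I^2$ has a dense $f_2$-orbit in $I^2$. There is no real obstacle here; the only subtle point worth stating explicitly is that the dense-orbit conclusion requires \emph{both} ergodicity and conservativity (ergodicity alone for $\lambda_2$ would not suffice, as the argument in Lemma~\ref{lem:stronger} uses conservativity to rule out the bad case where a positive-measure set escapes a basic open set). Both are supplied by Proposition~\ref{prop:limsup_full_equiv}, so the corollary follows immediately.
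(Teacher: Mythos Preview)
Your proof is correct and follows exactly the paper's approach: the paper simply states that the corollary follows ``by Proposition~\ref{prop:limsup_full_equiv} and Lemma~\ref{lem:stronger}'', which is precisely the combination you spell out. Your additional remarks on separability of $I^2$ and the necessity of conservativity are accurate elaborations of this one-line argument.
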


\begin{remark}
Probably the earliest result in this direction dates back to
Ledrappier  who proves in \cite[Theorem 1]{Ledrap} that a certain
class of interval maps is weak Bernoulli. Keller \cite{keller}
proved (weak-)mixing for acips $\mu$ of multimodal maps assuming
negative Schwarzian derivative. he also showed that
$d\mu/d\lambda$ is bounded away from zero on $\supp(\mu)$. One can
prove that $\mu$ is also mixing by means of an induced map $F:U
\to U$ as in Theorem~\ref{thm:induced} which possesses an acip
$\nu$, cf. \cite{Y}. In fact, by taking an appropriate power of
$F^N$, $U$ decomposes into a finite number of $F^N$-invariant
parts on which $\nu$ is invariant and mixing. Pulling back $\nu$
to the original system, we recover $\mu$ and by the topologically
mixing condition, $\mu$ can have only one mixing component.
\end{remark}

Conservativity of $\lambda_2$ is crucial in
Lemma~\ref{lem:stronger}, and even if $\lambda$ itself is
conservative for $(I,f)$, this does not guarantee that $\lambda_2$
is conservative for the Cartesian product. It is for this reason
that Proposition~\ref{PropExactDis} and Theorem~\ref{mainthmLY}
are not just direct consequences of ergodicity of $\lambda_2$ from
Lemma~\ref{exact-ergodic}. The following conjecture suggests
conditions under which $\lambda_2$-a.e.\ pair is Li-Yorke, but has
no dense orbit.

\begin{conjecture}
We think that if $(I,f, \lambda)$ is conservative and has no acip,
but instead the induced time $k_x$ associated to the induced map
in Theorem~\ref{thm:induced} is non-integrable w.r.t.\ Lebesgue,
and in fact the tail $\lambda(\{ x : k_x > s\}) \ge 1/\log s$,
then the product system $(I^2, f_2, \lambda_2)$ is dissipative.
\end{conjecture}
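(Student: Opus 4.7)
The plan is to reduce dissipativity of $(I^2,f_2,\lambda_2)$ to a Borel--Cantelli estimate driven by the renewal process attached to the induced map $F:U'\to U'$ of Theorem~\ref{thm:induced}. Write $k:U'\to\N$ for its induced time, so $F(x)=f^{k(x)}(x)$, and set
\[
 R_x \;:=\; \{k(x)+k(Fx)+\cdots+k(F^{j-1}x): j\ge 1\}\cup\{0\},
\]
the set of times at which the orbit of $x$ visits $U'$. Since $\lambda$ is conservative, $\lambda$-a.e.\ $x\in I$ enters $U'$ infinitely often, and complete dissipativity of $\lambda_2$ will follow once we show that for $\lambda_2$-a.e.\ $(x,y)\in U'\times U'$ the intersection $R_x\cap R_y$ is finite: any $f_2$-invariant conservative set would have to meet $U'\times U'$ in positive measure, contradicting finiteness of $R_x\cap R_y$ there. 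By the first Borel--Cantelli lemma it thus suffices to prove
\[
 \sum_{n=1}^{\infty}\lambda_2\bigl(\{(x,y)\in U'\times U' : n\in R_x\cap R_y\}\bigr) \;<\;\infty.
\]

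The first step is a one-sided renewal bound for the induced return process. Theorem~\ref{thm:induced} gives $F$ bounded distortion on a countable Markov partition, placing us in a Gibbs--Markov framework, while the hypothesis $\lambda(\{k>s\})\ge 1/\log s$ puts us squarely in the infinite-mean regime of Garsia--Lamperti/Erickson renewal theory. Indeed, the truncated first moment satisfies $\sum_{s=0}^{n-1}\lambda(\{k>s\})\gtrsim n/\log n$, and the expected output of infinite-mean renewal theory is the density bound
\[
 u_n \;:=\; \lambda\bigl(\{x\in U' : n\in R_x\}\bigr)\;\lesssim\;\frac{\log n}{n}.
\]
Transferring this from an i.i.d.\ reference model to our dynamical cocycle relies on bounded distortion and the Markov property of $F$ to compare consecutive return increments with an i.i.d.\ sequence of distribution $k$.

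The second and hardest step is to decouple the two coordinates. Because $f_2=f\times f$ uses the same $f$ on both factors, the events $\{n\in R_x\}$ and $\{n\in R_y\}$ are a priori correlated under $\lambda_2$. The natural strategy is to pass to the product induced system $\hat F = F\times F:U'\times U'\to U'\times U'$, which inherits bounded distortion from its one-dimensional factors, and to use its Markov structure to establish the quasi-independence estimate
\[
 \lambda_2\bigl(\{(x,y): n\in R_x\cap R_y\}\bigr) \;\le\; C\, u_n^{2}\;\lesssim\;\frac{(\log n)^{2}}{n^{2}}.
\]
The right-hand side is summable, so Borel--Cantelli combined with the opening reduction closes the argument.

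The principal obstacle is this decoupling. Absent an acip there is no spectral gap for the transfer operator of $F$, so the standard route through exponential decay of correlations on a Young tower is not directly available; one must work on a product tower and extract enough mixing information in an infinite-measure setting, which is notoriously delicate. A further subtlety is that the cocycle $k$ could a priori carry arithmetic resonances, arising for instance from the critical-orbit combinatorics, that force $R_x$ and $R_y$ to synchronize on a positive-density set; ruling out such resonances using only the tail hypothesis $\lambda(\{k>s\})\ge 1/\log s$ may well require structural information on $F$ beyond Theorem~\ref{thm:induced}, and this is where we expect the conjecture to be genuinely hard.
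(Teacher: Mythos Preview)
The statement you are attempting to prove is labeled a \emph{Conjecture} in the paper, and the paper does not contain a proof of it. In fact, the LaTeX source contains a commented-out block (inside \verb|\iffalse...\fi|) with a Proposition and ``proof'' following essentially the same renewal/Borel--Cantelli strategy you outline: model the induced map as a renewal process, estimate the one-sided renewal probability $u_n$, square it for the product, and sum. The authors explicitly annotated that block with ``The following Proposition and its proof are at best conjectural. Unless a true proof emerges, or at least one example, we have to take it out.'' So you have independently reconstructed the authors' own approach, and you have also correctly identified the point at which it breaks down.

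Your proposal is not a proof, and you say as much. The reduction to simultaneous returns to $U'\times U'$ and the appeal to infinite-mean renewal asymptotics for $u_n$ are the right framing. The genuine gap is precisely the decoupling step $\lambda_2(\{n\in R_x\cap R_y\})\le C\,u_n^2$: without an acip there is no spectral gap for the transfer operator of $F$, and bounded distortion alone does not yield the quasi-independence you need. The commented-out argument in the paper simply asserts the product bound without justification, and this is why it was excised. Your diagnosis of possible arithmetic synchronization between $R_x$ and $R_y$ is apt; ruling this out would likely require additional structural input on the induced map beyond what Theorem~\ref{thm:induced} provides. In short: your plan matches the authors' intended line of attack, and both founder at the same unjustified step, which is why the statement remains a conjecture.
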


\section{Li-Yorke chaos in the presence of Cantor attractors}\label{sec:attractors}

In this section, we concentrate on $C^2$ unimodal maps with Cantor
attractors $\A = \omega(c)$ for the unique critical point $c$.
If $f$ is infinitely renormalizable (\ie type (3) in
Proposition~\ref{propAttr}), then the situation regarding Li-Yorke
pairs is well-known: there are none. Instead, the attractor
is Lyapunov stable and conjugate to an {\em adding machine}
$(\Omega, g)$. In other words, $\Omega = \{ (\omega_j)_{j \geq 1}
: 0 \leq \omega_j < p_j \}$, for some sequence $(p_i)_{i \geq 1}$
of integers $p_i \geq 2$ (where $p_1 \cdots p_i$ are the periods of
the periodic intervals)
such that is equipped with product topology and the map
$g$ of ``adding $1$ and carry'':
\begin{equation}\label{eq:addingmachine}
\a( \omega_1,\omega_2, \dots) = \left\{ \begin{array}{ll}
0,0,,\dots, 0,\omega_k+1,\omega_{k+1},\omega_{k+2},\dots & \text{
if }
k = \min\{ i : \omega_i < p_i-1\}; \\
0,0,0,0,\dots & \text{ if } \omega_i = p_i-1 \text{ for all } i
\geq 1.
\end{array}\right.
\end{equation}

The following classification is due to \cite[Proposition 5.1]{BrJi}.

\begin{proposition}\label{PropApproxPer}
Let $f:I\to I$ be a continuous map and $x\in I$. The
system $(\omega(x),f)$ is conjugate to some $(p_i)$-adic adding
machine if and only if $x$ is approximately but not
asymptotically periodic, see Definition~\ref{DefApproxPer}.
\end{proposition}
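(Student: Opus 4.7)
The plan is to establish both implications, with the substance of the argument lying in the reverse direction.

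For the forward implication, assume $(\omega(x), f)$ is conjugate to a non-trivial adding machine $(\Omega, \a)$. The adding machine is equicontinuous and carries a natural nested sequence of clopen cylinder partitions of depth $k$, each of cardinality $P_k = p_1 p_2 \cdots p_k$, whose pieces are cyclically permuted by $\a$. Transferring this structure through the conjugacy yields, for every $k$, a clopen partition $\{Y_k^{(j)}\}_{j=0}^{P_k-1}$ of $\omega(x)$ cyclically permuted by $f$. Using continuity of $f$ I would thicken these sets to pairwise disjoint closed intervals $J_k^{(j)} \supset Y_k^{(j)}$ satisfying $f(J_k^{(j)}) \subset J_k^{((j+1)\bmod P_k)}$, producing a cycle of periodic intervals of period $P_k$. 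Since equicontinuity transfers through the conjugacy, $\omega(x)$ is Lyapunov stable, so the cycle diameter $\max_j |J_k^{(j)}|$ tends to zero as $k\to\infty$. The intermediate value theorem applied to $f^{P_k}|_{J_k^{(0)}}$ then yields a periodic point $p_k \in J_k^{(0)}$ of period dividing $P_k$. Given $\eps>0$, choose $k$ with $\max_j |J_k^{(j)}| < \eps$ and $N$ with $f^N(x) \in J_k^{(0)}$; for every $n\geq 0$ the points $f^{N+n}(x)$ and $f^{N+n}(p_k)$ lie in the same $J_k^{(n\bmod P_k)}$, giving approximate periodicity. That $x$ is not asymptotically periodic is immediate because $\omega(x)$, being a Cantor set, cannot coincide with the orbit of any periodic point.

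For the reverse implication, assume $x$ is approximately but not asymptotically periodic and extract periodic points $p_k$ of period $r_k$ with $\limsup_n |f^n(x)-f^n(p_k)| < \eps_k \to 0$. My first step is to verify that $r_k \to \infty$: otherwise, along a subsequence the $r_k$ would be constant equal to some $r$ and the $p_k$ would accumulate on a periodic point $p$ of period dividing $r$, so combining $|f^n(x)-f^n(p_k)| < \eps_k$ with the bound $|f^n(p_k)-f^n(p)| < \eta$ (which is valid for every $n$ by the periodicity of $p_k$ and $p$ once $k$ is large) would force $|f^n(x)-f^n(p)| \to 0$, contradicting the hypothesis. The main task is then to show that $(\omega(x),f|_{\omega(x)})$ is a minimal, equicontinuous Cantor system containing no periodic orbit. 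The approximation of $\orb(x)$ by $\orb(p_k)$ at scale $\eps_k$ should transfer, by passing to limits of subsequences, to a uniform equicontinuity statement on $\omega(x)$; minimality follows because every orbit in $\omega(x)$ must visit the cells around $\orb(p_k)$ in the same cyclic pattern, so no proper closed invariant subset can exist; and $\omega(x)$ is infinite because $r_k\to\infty$ precludes finite $f$-invariant subsets. Once these properties are in hand, the classical characterization of minimal equicontinuous Cantor systems (see \cite{BK}) identifies $(\omega(x),f)$ with the $(p_i)$-adic adding machine for a sequence $(p_i)$ determined by how the $r_{k+1}/r_k$ behave.

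The main obstacle is the equicontinuity step, since the approximate-periodicity hypothesis directly controls only the forward orbit of $x$ itself, not arbitrary points of $\omega(x)$. To transfer the property, for $y,z \in \omega(x)$ close enough one would pick times $n_y,n_z$ with $f^{n_y}(x)$ near $y$ and $f^{n_z}(x)$ near $z$; both are within $\eps_k$ of $\orb(p_k)$, so they must be approximated by iterates of the \emph{same} point of $\orb(p_k)$, and iterating forward using the periodicity of $p_k$ keeps $|f^n(y)-f^n(z)|$ small uniformly in $n$. A related delicate point is ruling out a periodic orbit $q$ inside $\omega(x)$: such a $q$ would be shadowed at arbitrarily small scale by iterates of $x$, hence by iterates of $p_k$, which is incompatible with $r_k\to\infty$ once the scale is smaller than the minimal distance between distinct points of $\orb(p_k)$.
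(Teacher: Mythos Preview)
The paper does not give a proof of this proposition; it is simply quoted from \cite[Proposition~5.1]{BrJi}. So there is no argument in the present paper to compare against, and I comment on your attempt directly.

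\textbf{Forward direction.} Your thickening step is incorrect as stated. You want pairwise disjoint closed intervals $J_k^{(j)}\supset Y_k^{(j)}$ with $f(J_k^{(j)})\subset J_k^{((j+1)\bmod P_k)}$; but such a cycle would make $J_k^{(0)}$ a periodic interval of period $P_k$ containing a point of $\omega(x)$, and for the \emph{strange adding machines} the paper explicitly discusses (see \cite{BKM,Btree}) the map $f$ is non-renormalizable, so no such small periodic intervals exist. Relatedly, $\omega(x)$ need not be Lyapunov stable, so you cannot invoke that either. The repair is easy: take $J_j=\mathrm{conv}(Y_k^{(j)})$ and use the \emph{opposite} inclusion $f(J_j)\supset J_{j+1}$ (valid because $f(J_j)$ is an interval containing $f(Y_k^{(j)})=Y_k^{(j+1)}$). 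A standard pull-back/covering argument then yields a periodic point $p_k$ with $f^j(p_k)\in J_j$ for every $j$, and since $\max_j|J_j|\to 0$ this gives approximate periodicity after a suitable shift.

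\textbf{Reverse direction.} Your outline is right, but the sketch for equicontinuity does not close. Picking a single time $n_y$ with $f^{n_y}(x)$ near $y$ gives no control on $|f^n(y)-f^{n+n_y}(x)|$ for large $n$, so ``iterating forward using the periodicity of $p_k$'' does not keep $|f^n(y)-f^n(z)|$ uniformly small. The missing observation is this: for $y\in\omega(x)$, choose $n_i\to\infty$ with $f^{n_i}(x)\to y$ and, passing to a subsequence, with $n_i\bmod r_k$ constant; set $q=f^{n_i}(p_k)\in\orb(p_k)$ (which is then independent of $i$). For every $n\ge 0$ one has
\[
|f^n(y)-f^n(q)|=\lim_i |f^{n+n_i}(x)-f^{n+n_i}(p_k)|\le \eps_k,
\]
so each $y\in\omega(x)$ is itself approximately periodic with the same witnesses $p_k$. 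In particular $|f^{jr_k}(y)-y|\le 2\eps_k$ for all $j\ge 0$, which is precisely regular recurrence; the Block--Keesling characterization \cite{BK} then identifies $(\omega(x),f)$ with an adding machine (non-trivial, since $x$ is not asymptotically periodic forces $\omega(x)$ to be infinite). This route also avoids having to compare $\eps_k$ with the minimal gap in $\orb(p_k)$, a comparison your sketch implicitly needs but cannot guarantee.
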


However, there are several constructions leading to {\em strange
adding machines}, \ie (critical) omega-limit sets that are
conjugate to adding machines, but not involving periodic intervals,
see \cite{BKM, BrJi, Btree}. In \cite{Btree} it is shown that a
strange adding machine can still be an attractor, but in this case
it is a wild and not a solenoidal attractor.

The dynamics on such attractors can frequently be understood in
terms of generalized adding machines as done in \cite{BKS} and in the
proof of Theorem~\ref{thm:DistalAttractor} below. Such generalized adding
machines are based on the sequence of cutting time, which we will define now.

Let $Z_n(x)$ be the \emph{$n$-cylinder}, \ie maximal interval
containing $x$ on which $f^n$ is monotone. Unless $x \in \cup_{m <
n} f^{-m}(c)$, $Z_n(x)$ is well-defined, but the critical point itself has two
sets $Z_n^{\pm}(c)$ on either side of it, with $\D_n :=
f^n(Z_n^\pm(c))$ independent of $\pm$.
One can show that for
every $x$ and $n \geq 1$, there is $m \leq n$ such that $f^n(Z_n(x)) =
\D_m$; if $x \in f^{-m}(c)$, then $x$ is the common boundary point of two
$n$-cylinder sets $Z_n^\pm(x)$, but
$f^n(Z_n^\pm(x)) = \D_m$ independently of $\pm$.
We say that $n$ is a {\em cutting time} if $\D_n \owns c$,
and we list them in increasing order $1 = S_0 < S_1 < \dots$.
Write $c_n := f^n(c)$. By a short induction proof one can show
that $\D_1 = [0,c_1]$ and for $n \geq 2$
\begin{equation}\label{eq:beta}
\D_n = [c_n, c_{\beta(n)}] \text{ and } \D_n \subset
\D_{\beta(n)},
\end{equation}
where the map $\beta:\N \to \N$ is defined as $\beta(n) = n -
\max \{ S_k : S_k < n\}$. In the special case that $n = S_k$ is a
cutting time, this means that $S_k - S_{k-1}$ is again a cutting
time, so one can define the {\em kneading map} $Q: \N \to \N \cup
\{ 0, \infty \}$ by
\[
S_k = S_{k-1} + S_{Q(k)},
\]
see \cite{hofbauer, Bknead}.
Here $Q(k) = \infty$ means that $S_k$ does not exist; in this case $c$ is attracted to an orbit of period $S_{k-1}$ or $2S_{k-1}$.
Since we assumed that $f$ has a Cantor attractor, $Q(k) < \infty$
for all $k \in \N$.
Some properties of the kneading map related to $\omega(c)$ are as follows.

\begin{proposition}\label{prop:Qinfty}
(a) The minimal $n$ such that $Z^\pm_n(c) \subset [c, c_{S_k}]$
is $n = S_{Q(k+1)}$. \\
(b) If $f$ is a unimodal map with kneading map $Q(k) \to \infty$,
then the length $|\D_n| \to 0$ and $\omega(c)$ is a minimal Cantor set.
\\[1mm]
(c) If $\max_k k - Q(k) \leq B$, then
$\omega(c) \subset \cup_{n = 1+S_k}^{S_{k+B}} \D_n$ for every $k$. \\[1mm]
(d) The map is renormalizable if and only if there is $k \geq 1$ such
that $k = Q(k+1) \leq Q(k+j)$ for all $j > 1$. In this case $S_k$
is the period of renormalisation.
\end{proposition}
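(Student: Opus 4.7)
My plan rests on working out how the cylinder sets $Z_n^{\pm}(c)$ change with $n$: they shrink strictly only at $n = S_j + 1$ for each cutting time $S_j$, because the preimage of $c$ under $f^{S_j}$ inside $Z_{S_j}^{\pm}(c)$ forces a new boundary. For $S_{j-1} < n \leq S_j$, one has $Z_n^{\pm}(c) = [c, y_{j-1}]$ with $y_{j-1}$ the preimage of $c$ under $f^{S_{j-1}}$ lying in $Z_{S_{j-1}}^{\pm}(c)$, and the images satisfy $f^{S_j}([c, y_{j-1}]) = \D_{S_j} = [c_{S_j}, c_{S_{Q(j)}}]$. All four parts build on this structural picture.

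For part (a), the identity $S_{k+1} = S_k + S_{Q(k+1)}$ translates into the statement that $f^{S_k}$ maps the cylinder $Z_{S_{k+1}}^{\pm}(c)$ monotonically onto $[c, c_{S_k}]$ on the side of $c$ occupied by $c_{S_k}$. Composing with $f^{S_{Q(k+1)}}$ then yields the image $\D_{S_{k+1}}$, so $[c, c_{S_k}]$ is precisely a monotonicity interval for $f^{S_{Q(k+1)}}$ around $c$. Thus $Z_{S_{Q(k+1)}}^{\pm}(c)$ agrees with $[c, c_{S_k}]$ up to a boundary point, and no smaller cylinder order works because for $n < S_{Q(k+1)}$ the preimage of $c$ that would shrink the cylinder past $c_{S_k}$ has not yet been realized by any cutting time. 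I would verify this by tracing the $f^{S_j}$-orbit of $c$ and $c_{S_k}$ and applying the minimality built into the definition of $Q$.

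For part (b), $Q(k) \to \infty$ combined with Proposition~\ref{prop:contraction} and the no-wandering-interval property of $\CIInf$ maps forces the nested cylinders $[c, y_k]$ and their images $\D_{S_k}$ to shrink to $\{c\}$: any accumulation on a non-degenerate interval would, by the contraction principle and the absence of attracting periodic orbits relevant here, contradict the no-wandering-interval theorem. For general $n$, iterate $\beta$ to reach a cutting time $S_j$; as $n \to \infty$ one has $j \to \infty$, so $|\D_n| \leq |\D_{S_j}| \to 0$. Minimality of $\omega(c)$ then follows from the standard argument that the orbit of $c$ is dense in $\omega(c)$ and $Q(k) \to \infty$ rules out invariant periodic-interval substructure. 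For part (c), each $c_n$ lies in $\bigcap_j \D_{\beta^j(n)}$; the $\beta$-orbit strictly decreases and eventually lands in $[1, S_{k+B}]$, and the hypothesis $\max_k (k - Q(k)) \leq B$ caps the decrements $\beta(m) = m - S_{j-1}$ so that once the $\beta$-orbit drops below $S_{k+B}$, it is already in $[1+S_k, S_{k+B}]$, giving the cover.

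Part (d) converts the topological condition of renormalizability at period $S_k$ into the combinatorial one $Q(k+j) \geq k$ for all $j \geq 1$, with equality at $j=1$ meaning $S_{k+1} = 2S_k$. Assuming renormalizability on a periodic interval $J \owns c$ of period $S_k$, the restriction $f^{S_k}|_J$ is itself a unimodal map whose cutting times are $S'_j = S_{k+j} - S_k$, and the requirement that its critical orbit remain in $J$ forces each later cutting time of $f$ to decompose as $S_{k+j} = S_{k+j-1} + S_{Q(k+j)}$ with $Q(k+j) \geq k$. Conversely, the inequality $Q(k+j) \geq k$ allows one to pull back a nice neighborhood of $c$ at the $S_k$-scale, producing a nested sequence of monotonicity intervals whose intersection is a periodic interval of period $S_k$. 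The main obstacle I expect is the orientation bookkeeping in part (a), since which side of $c$ the preimages land on switches with each cutting-time pullback; once (a) is in hand, parts (b)--(d) follow essentially formally from the nested cylinder structure and the no-wandering-interval result.
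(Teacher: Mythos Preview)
Your treatments of (a) and (d) are sound and agree with the paper's approach: (a) is exactly the observation that $S_{Q(k+1)}$ is the first time $f^n([c,c_{S_k}])\ni c$, and for (d) the paper simply cites \cite[Proposition~1(iii)]{Bknead}, which is what your sketch reconstructs.

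There is, however, a genuine gap in the $\beta$-iteration step you use in both (b) and (c). Your claim is that iterating $\beta$ from a large $n$ lands on a cutting time $S_j$ with $j\to\infty$, respectively lands in the window $[1+S_k,S_{k+B}]$. This fails already for $n=1+S_l$: since $\max\{S_i:S_i<n\}=S_l$, one has $\beta(1+S_l)=1$, so the $\beta$-orbit drops straight to $1$ regardless of how large $l$ is. In (b) this means the bound $|\D_n|\le|\D_{\beta^j(n)}|$ only gives $|\D_{1+S_l}|\le|\D_1|$, which is useless; in (c) it means the $\beta$-orbit of $1+S_{k+B}$ skips the window $[1+S_k,S_{k+B}]$ entirely. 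The hypothesis $k-Q(k)\le B$ controls $\beta$ only at cutting times (where $\beta(S_l)=S_{Q(l)}\ge S_{l-B}$), not at the integers immediately following them.

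The paper repairs both points by going forward rather than backward. For (b): once $|\D_{S_k}|\to 0$ is established, note that for $S_{k-1}<n\le S_k$ the map $f^{S_k-n}$ carries $\D_n$ monotonically onto $\D_{S_k}$, so $\D_n$ sits inside a component of $f^{-(S_k-n)}(\D_{S_k})$ and the non-Contraction Principle (Proposition~\ref{prop:contraction}) forces $|\D_n|\to 0$. For (c): part (a) gives $[c,c_{S_l}]\subset[c,c_{S_k}]$ whenever $Q(l+1)\ge k+1$, hence $\D_{1+S_l}\subset\D_{1+S_k}$ for all $l\ge k+B$; applying $f^{m-1}$ yields $\D_{m+S_{k+B}}\subset\D_{m+S_k}$ for every $m\ge 1$, and iterating this inclusion shows $\bigcup_{n>S_k}\D_n\subset\bigcup_{n=1+S_k}^{S_{k+B}}\D_n$. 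Your invocation of Proposition~\ref{prop:contraction} was the right tool for (b); it just needs to be applied to the forward pullback of $\D_{S_k}$ rather than to the $\beta$-chain.
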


\begin{proof}
By definition of the kneading map, $n = S_{Q(k+1)}$ is the smallest positive iterate such that $f^n([c,c_{S_k}]) \owns c$, implying statement (a).
\\
Under the assumption that $Q(k) \to \infty$, this means that $|c_{S_k} - c| \to 0$
and also $|c_{S_{Q(k)}} - c| \to 0$ as $k \to \infty$.
Since $\D_{S_k} = [c_{S_k}, c_{S_{Q(k)}}]$, the non-Contraction Principle
shows that $|\D_n| \to 0$ as well. Minimality of $\omega(c)$ follows as in \ie
\cite[Proposition 2]{Bknead} (which proves that $c$ is persistently recurrent) and \cite[Lemma 8]{Bknead}.
This proves statement (b).
\\
For statement (c), observe that $\D_{1+S_l} \subset \D_{1+S_k}$
for all $l \geq k+B$ because of the assumption $\max_k \{ k - Q(k) \} \leq B$
and statement (a).
It follows that $\D_{m + S_{k+B}} \subset \D_{m+S_k}$
for all $m \geq 1$, and $\cup_{n \geq 1+S_k} \D_n \subset \cup_{n = 1+S_k}^{S_{k+B}} \D_n$.
By definition, $\omega(c) \subset \Cl \cup_{n \geq 1+S_k} \D_n \subset \Cl \cup_{n = 1+S_k}^{S_{k+B}} \D_n
= \cup_{n = 1+S_k}^{S_{k+B}} \D_n$ as asserted.
\\
Part (d) is \cite[Proposition 1(iii)]{Bknead}.
\end{proof}

Special types of unimodal maps are the Feigenbaum map ($S_k =
2S_{k-1}$) and the Fibonacci map ($S_k = S_{k-1} + S_{k-2}$). We
call $f$ {\em Fibonacci-like} if $\{ k - Q(k)\}_k$ is bounded.

The proof of the existence of wild attractors was first
established in \cite{BKNS} for Fibonacci maps with sufficiently
critical order $\ell$. In \cite{BTams}, this result was extended
to $C^3$ unimodal maps with negative Schwarzian derivative, sufficiently
large critical order and eventually non-decreasing kneading map $Q$ such
that $\limsup_k k-Q(k) \leq B$.
If the cutting times increase
more slowly than Fibonacci-like, then no unimodal
map with finite critical order can have a wild attractor.

Before we continue, let us give a short exposition how the
existence of wild attractor is proved for unimodal maps. There are
two approaches, both based on a random walk on a Markov graph. In
\cite{BKNS, BTams, BrHaw}, this Markov graph of $I$ is based on
preimages of the orientation reversing fixed point $p$. The states
of that Markov system are pairs of intervals $U_k \subset
(Z^+_{S_k}(c) \cup Z^-_{S_k}(c)) \setminus ( Z^+_{S_{k+B}}(c) \cup
Z^-_{S_{k+B}}(c) )$ and $\partial U_k$ belongs to the backward
orbit of $p$. The induced Markov map defined by $G|_{U_k} =
f^{S_k}$ preserves the partition $\{ U_k \}$ of $[\hat p, p]$,
where $f^{-1}(p) = \{ p, \hat p\}$. Viewing the dynamics of $G$ as
a random walk, we define ``random variables'' $\chi_n$ by
\begin{equation}\label{eq:drift}
\chi_n(x) = k \text{ if } G^n(x) \in U_k.
\end{equation}
It is then shown that this process has {\em positive
drift}, \ie the expectations (measured with respect to
Lebesgue measure)
\[
\E(\chi_{n+1} - k | \chi_n = k) \geq \eta > 0
\]
uniformly in $n$ and $l$. The second moments $\E((\chi_{n+1} -
k)^2 | \chi_n = k)$ are shown to be bounded as well. It follows
that $\chi_n(x) \to \infty$ for $\lambda$-a.e.\ $x$ and hence
$G^n(x) \to c$. Since $U_k  \subset (Z^+_{S_k}(c) \cup
Z^-_{S_k}(c))$, $f^{S_k}(U_k) \subset \D_{S_k}$ and since $|\D_n|
\to 0$ as $n \to \infty$, this means for the original map
 that $f^n(x) \to \omega(c)$ for $\lambda$-a.e.\ $x$,
so  $\A = \omega(c)$ is an attractor.

In \cite[Theorem 5.2]{BrHaw}, a further conclusion is drawn from
the positive drift, namely a Borel-Cantelli Lemma argument shows that for
$\lambda$-a.e.\ $x \in \bas(\A)$, there is $k_0 = k_0(x)$ such that
such that for all $k \geq k_0$
\begin{equation}\label{eq:driftCB}
\text{ if } G^m(x) \in U_k,
\text{ then } G^{m+j}(x) \notin U_k
\text{ for } j > k.
\end{equation}

In this paper, we will use a second approach from \cite{Bruin, BKS},
where the Markov system is a disjoint union
 $\hat I = \sqcup_{n \geq 2} \D_n$.
(the {\em Hofbauer tower}) for intervals $\D_n$ defined above.
Let $\hat f : \hat I \to \hat I$ be defined by
$$
\hat f(\D_n) = \left\{ \begin{array}{ll}
\D_{n+1} & \text{ if $n$ is not a cutting time;} \\[1mm]
\D_{1+S_k} \sqcup ( \D_{1+S_{Q(k)}} \setminus \{c_1\}) & \text{ if $n = S_k$ is a cutting time.}
\end{array} \right.
$$
Then $i \circ \hat f = f \circ i$, where $i : \hat I \to I$ is
the inclusion map, and $f$ is continuous, except at the points
$c \in \D_{S_k}$, $k \geq 1$, which are mapped to $c_1 \in \D_{1+S_k}$.
Since these are only countable many points, this has no effect on the
Lebesgue typical behavior.
The collection $\{ \D_n \}_{n \geq 2}$ is a {\em Markov partition}
of $\hat I$, because $\hat f$ maps each $\D_n$ to the union of partition elements
(ignoring again the point $c_1 \in \D_{1+S_{Q(k)}}$).
Starting in some $\D_n$, the subsequence intervals visited are unique
determined up to the moment we reach some $\D_{S_k}$, where we have a choice
between $\D_{1+S_k}$ and $\D_{1+S_{Q(k)}}$.
Therefore it suffices to consider the transitions from interval
$E_l := \D_{1+S_l}$ realized by the $S_{Q(l+1)}$-th image of $f$,
see Figure~\ref{fig:Markov}.
\begin{figure}[ht]
\unitlength=8.8mm
\begin{picture}(20,5)(-1,0.5)
\thicklines
\put(1,2.5){\line(0,1){2}} \put(0.7,4.8){$\D_{1+S_{ Q(l)-1 } }$}
\put(4,2.5){\line(0,1){2}} \put(3.8,4.8){$\D_{1+S_{Q(l)}}$}
\put(7,2.5){\line(0,1){2}} \put(6.7,4.8){$\D_{1+S_{Q(l)+1}}$}

\put(11,2.5){\line(0,1){2}} \put(10.7,4.8){$\D_{1+S_{l-1}}$}
\put(14,2.5){\line(0,1){2}} \put(13.8,4.8){$\D_{1+S_l}$}

\thinlines
\put(0.3,3){$\dots$}
 \put(1.5,3){\vector(1,0){2}} \put(2,3.3){$f^{S_{Q^2(l)}}$}
 \put(4.5,3){\vector(1,0){2}} \put(4.8,3.3){$f^{S_{Q(Q(l)+1)}}$}
 \put(7.5,3){\vector(1,0){2}} \put(7.8,3.3){$f^{S_{Q(Q(l)+2)}}$}
\put(9.6,3){$\dots$}

 \put(11.5,3){\vector(1,0){2}} \put(12,3.3){$f^{S_{Q(l)}}$}
 \put(14.5,3){\vector(1,0){2}} \put(15,3.3){$f^{S_{Q(l+1)}}$}
\put(16.6,3){$\dots$}

 \put(10.7,2.5){\line(-2,-1){2}}
 \put(8.7,1.5){\line(-1,0){2}}
 \put(6.7,1.5){\vector(-2,1){2}}
\put(7.2,0.8){$f^{S_{Q(l)}}$}

 \put(13.7,2.5){\line(-2,-1){2}}
 \put(11.7,1.5){\vector(-1,0){2}}
\put(10.2,0.8){$f^{S_{Q(l+1)}}$}

 \put(6.7,2.5){\line(-2,-1){2}}
 \put(4.7,1.5){\vector(-1,0){2}}
\put(3.2,0.8){$f^{S_{Q(Q(l)+2)}}$}

 \put(3.7,2.5){\line(-2,-1){2}}
 \put(1.7,1.5){\vector(-1,0){2}}
\put(0.2,0.8){$f^{S_{Q(Q(l)+1)}}$}

\end{picture}
\caption{The transitions from $\D_{1+S_l}$, $l \geq 1$.
Backward transitions go from $\D_{1+S_l}$ to $\D_{1+S_{Q(l+1)} }$.
\label{fig:Markov} }
\end{figure}
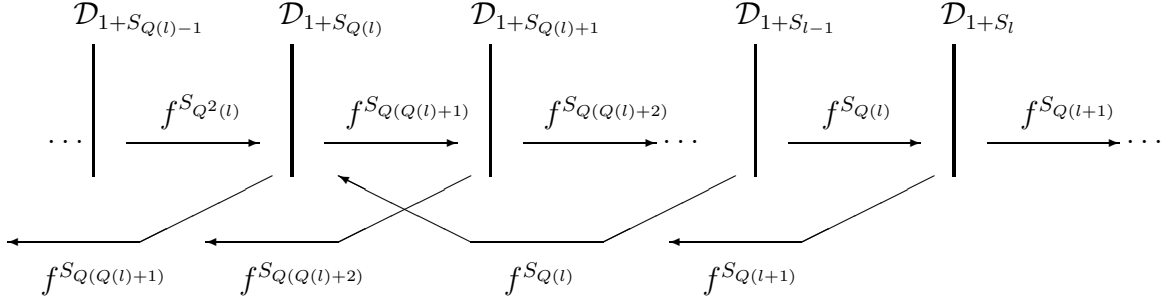
It follows that $\{ E_l \}_{l \geq 1}$
is a Markov partition for the induced map
\[
F: \sqcup_{l \geq 0} E_l \to \sqcup_{l \geq 0} E_l, \quad
F|_{E_l} =  \hat f^{S_{Q(l+1)}}.
\]
One can show that if $f$ is renormalizable of period $S_k$, then
$\sqcup_{l \geq k-1} E_l$ is a trapping region for $F$, see part (d)
of Proposition~\ref{prop:Qinfty}.
Now we need to translate the results on positive drift to the current
set-up. Write $\hat \chi_n(y) = l$ if $F^n(y) \in E_l$.

\begin{lemma}\label{lem:hatchi}
If $\{ \chi_n \}_{n\geq 0}$ has positive drift as in
\eqref{eq:drift} (and bounded second moments), then for
$\lambda$-a.e.\ $y \in \sqcup_{l \geq 1} E_l$, we have $\hat
\chi_n(y) \to \infty$. Furthermore, there is $\hat k_0 = \hat
k_0(y)$ and $C > 0$ such that for all $\hat k \geq \hat k_0$, if
$\hat\chi_n(y) = \hat k$, then $\hat\chi_{n+j}(y) > \hat k$ for
all $j > C\hat k$.
\end{lemma}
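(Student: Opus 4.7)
The plan is to transfer the positive-drift statements about the random walk $\{\chi_n\}$ on $\{U_k\}$ (induced by $G$) to the analogous walk $\{\hat\chi_n\}$ on $\{E_l\}$ (induced by $F$) on the Hofbauer tower. The two Markov systems are really two ways of encoding the same dynamics near $\omega(c)$: if $x \in U_k \subset Z^\pm_{S_k}(c)$, then $f^{S_k}(x) \in \D_{1+S_k} = E_k$, so the inclusion map $i:\hat I \to I$ conjugates the $F$-orbit of $\hat x := f^{S_k}(x)$ (up to the countably many points of the critical backward orbit, which form a $\lambda$-null set) to a subsequence of the $f$-orbit of $x$. Each step of $F$ from $E_l$ to $E_{l'}$ is realized by $f^{S_{Q(l+1)}}$, and this corresponds to a single step (or a bounded number of steps, after the first renormalization-free entry) of $G$ between the matching $U$-levels.

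First I would make this correspondence precise by showing that, for a.e.\ $x$ in the trapping region, the sequence $(\chi_n(x))$ and $(\hat\chi_n(\hat x))$ differ only by a uniformly bounded index shift, and that the number of $G$-iterates needed to go from one visit to a given $U_k$ to the next is comparable (up to a constant depending only on the bound $B$ on $k-Q(k)$ and the diameter of the strip $U_k$) to the number of $F$-iterates between the corresponding visits to $E_l$. Absolute continuity of the transition between the two Markov encodings then pushes forward the positive-drift hypothesis: $\{\hat\chi_n\}$ has positive drift with bounded second moments with respect to Lebesgue measure on $\sqcup_l E_l$.

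With positive drift in hand, the standard random walk argument (say, Chebyshev applied to $\hat\chi_n - \hat\chi_0$ combined with Borel--Cantelli, exactly as used to derive \eqref{eq:driftCB}) gives $\hat\chi_n(y) \to \infty$ for $\lambda$-a.e.\ $y \in \sqcup_l E_l$. For the second conclusion, I would apply the Borel--Cantelli argument directly to $\hat\chi_n$: the probability of returning from level $\hat k$ to level $\le \hat k$ after $j$ steps decays exponentially (or at least summably) in $j/\hat k$ thanks to the drift and second-moment bound, so the expected number of returns to $E_{\hat k}$ within a window of size $C\hat k$ is summable in $\hat k$ for $C$ sufficiently large, and Borel--Cantelli furnishes the required $\hat k_0(y)$.

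The main obstacle will be step one: pinning down the precise quantitative dictionary between $\{U_k\}$-visits and $\{E_l\}$-visits. One must verify that the index shift $k \leftrightarrow l$ is uniformly bounded (which relies on the fact that $U_k$ lies in the $S_k$-cylinder and that the Markov partition elements $\D_{1+S_l}$ are in bijective correspondence with the levels traversed by the $G$-walk, modulo the $k - Q(k) \le B$ bound that keeps $\omega(c)$ inside finitely many $\D_n$ between consecutive cutting times, cf.\ Proposition~\ref{prop:Qinfty}(c)), and that the time reparametrization between $F$-steps and $G$-steps is controlled linearly in the current level $\hat k$. Once this dictionary is set up, the probabilistic conclusions transfer with essentially no new estimates.
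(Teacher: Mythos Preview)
Your overall strategy --- set up a dictionary between the $\{U_k\}$/$G$ encoding and the $\{E_l\}$/$F$ encoding, then transfer the drift conclusions --- is the same as the paper's. But there are two places where your plan diverges from what actually works, and one of them is a genuine gap.

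First, the claim that $(\chi_n(x))$ and $(\hat\chi_n(\hat x))$ ``differ only by a uniformly bounded index shift'' is not correct. One step of $G$ from $U_k$ to $U_l$ corresponds not to a bounded number of $F$-steps but to roughly $|l-k|$ of them (the paper gets $l-k-B \le u_n \le l-k+B$), because climbing from $E_k$ (or $E_{Q(k)}$) to $E_l$ (or $E_{Q(l)}$) in the Hofbauer tower requires passing through all intermediate levels. So the time reparametrization is unbounded; what \emph{is} bounded is the discrepancy between the $\chi$-level and the $\hat\chi$-level at the matching times, namely $|\chi_n(x) - \hat\chi_{m_n}(\hat x)| \le B$ for a strictly increasing sequence $(m_n)$.

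Second, and this is the real gap: you propose to ``push forward the positive-drift hypothesis'' to $\{\hat\chi_n\}$ and then rerun the Chebyshev/Borel--Cantelli argument. But positive drift for $\hat\chi$ is a statement about the one-step transition probabilities out of each $E_l$, and nothing you wrote explains why absolute continuity of the encoding map would give that; drift is not preserved under an unbounded time change. The paper sidesteps this entirely: it never establishes drift for $\hat\chi$. Instead it uses the two \emph{consequences} of drift that have already been proved for $\chi$, namely $\chi_n(x)\to\infty$ and the recurrence bound \eqref{eq:driftCB}, and translates those. Since during the $u_n$ intermediate $F$-steps the $\hat\chi$-level never drops below $k-B$, one gets $\hat\chi_n\to\infty$ immediately. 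For the second statement, the paper counts: the $k$ steps of $G$ allowed by \eqref{eq:driftCB} before $\chi$ leaves level $k$ for good correspond to at most $\sum u_j$ steps of $F$, and a short argument bounds this sum by $2Bk \le 3B\hat k$, giving $C=3B$. This is much more direct than re-proving drift; your route would require an independent geometric estimate on the transition probabilities $E_l \to E_{l+1}$ versus $E_l \to E_{Q(l+1)}$, which you have not supplied.
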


\begin{proof}
The positive drift of $\{ \chi_n \}$ ensures that for
$\lambda$-a.e.\ $x$ and $k \in \N$, there is $n_k$ such that
$G^n(x) \in \cup_{j \geq k} U_j$ for all $n \geq n_k$. Since $U_k
\subset Z^+_{S_k}(c) \cup Z^-_{S_k}(c)$, the image $f^{S_k}(U_k)
\subset \D_{S_k}$ and $f^{1+S_k}(U_k) \subset \D_{1+S_k} \cup
\D_{1+S_{Q(k)}} = E_k \cup E_{Q(k)}$. This means that $y :=
f^{1+S_k} \circ G^n(x) \in  E_k \cup E_{Q(k)}$. If consequently
$G^{n+1}(x) \in U_l$, $l > k$, then $f^{S_l}(y) \in E_l \cup
E_{Q(l)}$ but the move from $E_k$ or $E_{Q(k)}$ up to $E_l$ or
$E_{Q(l)}$ in $\sqcup_{j \geq 0} E_j$ involves passages through
the intermediate states $E_i$ as well, but ``lower'' states $E_i$,
$i < k-B\leq \min_{j \geq k} Q(j)$ are avoided. Therefore
$\hat\chi_n(f(x)) \to \infty$ for all $n \to \infty$.

For the second statement, observe that the passage from $E_k$ or
$E_{Q(k)}$ up to $E_l$ or $E_{Q(l)}$ requires $u_n$ iterates of
$F$ for some $l-k-B \leq u_n \leq l-k+B$. Suppose $G^m(x) \in
U_k$, $k \geq k_0(x)$ as in \eqref{eq:driftCB}, then $G^{m+j}(x)
\notin U_k$ for $j \geq k$. Take $\hat k \in \{ k, Q(k) \}$ such
that $y = f^{1+S_k}(G^m(x)) \in E_{\hat k}$. The iterates $m+1,
\dots, m+k$ of $G$ correspond to $\sum_{j = m+1}^{m+k} u_j$
iterates of $F$. Each $u_j \geq 1$, and if $\chi_{m+j}(x) <
\chi_{m+j-1}(x)$, then this single iterate of $G$ corresponds to a
single iterate of $F$, reducing the index of the state by at most
$B$. If $\chi_{m+j}(x) \gg \chi_{m+j-1}(x)$, then one iterate of
$G$ corresponds to many iterates of $F$, but if some of these
iterates brings $y$ above state $E_{\hat k + kB}$, then it would
take more than $k$ steps of $G$ to return, hence this will not
occur. Thus, if the $\sum_{j = m+1}^{m+\hat k} u_j$ iterates of
$F$ (corresponding $k$ iterates of $G$) keep $y$ close to state
$E_{\hat k}$, then $\sum_{j = m+1}^{m+\hat k} u_j \leq 2Bk \leq
3B\hat k$, where the last inequality follows because $k-B \leq
\hat k \leq k$. This proves the second statement for $C = 3B$.
\end{proof}

The dynamics of wild attractors has been investigated in
\cite{BKS}. In that paper, various combinatorial types are
presented for which $(\A, f)$ is semi-conjugate to a
monothetic group $(G,g)$, where $g:G \to G$ is an isometry for
which every orbit is dense. The best known example goes back to
Lyubich and Milnor \cite{LM}; it is the Fibonacci map and its
omega-limit set $\omega(c)$
factorizes over the golden mean circle rotation. In
\cite{BKS}, similar examples are shown factorising onto tori of
any dimension, and even onto a solenoid. On the other hand, \cite{BKS}
gives examples for which $(\A, f$) is weak mixing with respect to the
unique invariant probability measure $\mu$ supported on
$\A$. In \cite{BrHaw}, the simplest such example is shown
to be Lebesgue exact as well.

Let $\fp( x ) := x - \mbox{round}(x) \in [-\frac12,\frac12)$ be
the signed distance of $x$ to the nearest integer.

\begin{theorem}\label{thm:DistalAttractor}
Assume that a unimodal map has a wild attractor with positive
drift. If there exists $\rho$ such that the cutting times satisfy
$$
\sum_k k \max_{i \geq k-B} |\fp( \rho S_k )|  < \infty,
$$
for $B = \limsup_k k-Q(k)$, then $\lambda_2(\Dis) = 1$.
\end{theorem}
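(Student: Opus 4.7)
The plan is to construct a measurable ``rotation coordinate'' $\pi: \bas(\A) \to \R/\Z$ satisfying $\pi \circ f = R_\rho \circ \pi$ almost everywhere (with $R_\rho$ rotation by $\rho$), and then to deduce that for $\lambda_2$-a.e.\ pair $(x,y)$ one has $\pi(x) \neq \pi(y)$. Distality of such pairs then follows because the rotation is an isometry, so the quotient distance $|\fp(\pi(x)-\pi(y))|$ is preserved under $f$; combined with continuity of $\pi$ on $\A$, this fixed $\R/\Z$-distance translates into a positive $\liminf$ of $|f^n(x)-f^n(y)|$.

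\textbf{Construction of $\pi$.} Assign to each tower level $\D_n$ the naive label $\ell_n := n\rho \pmod 1$. Non-cutting-time transitions $\D_n \to \D_{n+1}$ respect these labels, and at a cutting time $n=S_k$ the forward branch $\D_{S_k}\to \D_{1+S_k}$ is consistent, whereas the backward branch $\D_{S_k}\to \D_{1+S_{Q(k)}}$ introduces a discrepancy of magnitude $|\fp(\rho S_{k-1})|$. For $x\in \bas(\A)$ with canonical lift $\hat x\in \hat I$, define the phase
\[
L_n(x) := \ell_{\hat f^n(\hat x)} - n\rho \pmod 1,
\]
which is piecewise constant in $n$ with jumps bounded by $|\fp(\rho S_{k-1})|$ at each backward-branch event at level $k$. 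By the positive-drift conclusion of Lemma~\ref{lem:hatchi}, for $\lambda$-a.e.\ $x$ one has $\hat\chi_n(x)\to\infty$ and once $F^n(x)$ reaches level $\hat k$ it revisits levels $\leq \hat k$ at most $O(\hat k)$ more times; combined with the bound $B$ on $k-Q(k)$ and the Markov structure of Figure~\ref{fig:Markov}, the total discrepancy accumulated from levels $\geq k$ is at most a constant multiple of $k\cdot \max_{i\geq k-B}|\fp(\rho S_i)|$, which is precisely the summand in the hypothesis. Borel--Cantelli then yields $L_n(x)\to L_\infty(x)$ a.s., and $\pi(x):= -L_\infty(x)$ satisfies $\pi\circ f = R_\rho\circ \pi$ a.e.

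\textbf{Label separation and main obstacle.} Summability of $|\fp(\rho S_k)|$ forces $\rho$ to be irrational, so $\pi_*(\lambda|_{\bas(\A)})$ is an $R_\rho$-quasi-invariant probability measure on $\R/\Z$; by exactness of $f$ on $\bas(\A)$ it is $R_\rho$-ergodic and non-atomic, and Fubini yields $\lambda_2\{(x,y): \pi(x)=\pi(y)\}=0$. The main obstacle is converting $\pi(x)\neq \pi(y)$ into $\liminf_n |f^n(x)-f^n(y)|>0$: the restriction of $\pi$ to $\A$ must be shown to coincide with the canonical continuous semi-conjugacy $\A\to \R/\Z$ from the wild adding-machine-type construction in \cite{BKS}, after which uniform continuity of that semi-conjugacy --- together with the fact that $\lambda$-a.e.\ $x\in\bas(\A)$ approaches $\A$ in the Hausdorff sense along its forward orbit, and that $|\D_m|\to 0$ by Proposition~\ref{prop:Qinfty}(b) --- shows that $f^n(x)$ and $f^n(y)$ cannot come asymptotically close in $I$ unless their $\pi$-labels agree. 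Since the exceptional set where $\pi(x)=\pi(y)$ has $\lambda_2$-measure zero, $\lambda_2$-a.e.\ pair is distal, so $\lambda_2(\Dis)=1$.
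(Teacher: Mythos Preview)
Your construction of the factor map $\pi:\bas(\A)\to\R/\Z$ via tower-level labels and the drift estimate of Lemma~\ref{lem:hatchi} is essentially the paper's Step~1 (there written through $\tilde\pi_n(x)=-\rho(b_n(x)-n)\bmod 1$), and the key summability estimate you isolate is exactly the one used. However, two later steps have genuine gaps.

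\emph{Measure-zero fibers.} You deduce non-atomicity of $\pi_*\lambda$ from ``exactness of $f$ on $\bas(\A)$''. This is not available: the very existence of the measurable rotation factor $\pi$ rules out Lebesgue exactness, since for any arc $J\subset\S^1$ the set $A=\pi^{-1}(J)$ satisfies $f^{-n}(f^n(A))=A$ (up to null sets) while $0<\lambda(A)<\lambda(\bas(\A))$. Nor does mere ergodicity of $\lambda$ suffice, because irrational rotations admit atomic ergodic quasi-invariant measures (e.g.\ any summable positive weighting of a single orbit). The paper instead argues directly: if a fiber $W=\pi^{-1}(s)$ had positive measure, then $f^m(W)\cap f^n(W)=\emptyset$ for $m\neq n$ (since $R_\rho$ is a bijection and $\pi\circ f=R_\rho\circ\pi$), so $W$ would be a strongly wandering set of positive measure, contradicting Theorem~\ref{mainthm:Strong}.

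\emph{From $\pi(x)\neq\pi(y)$ to distality.} Your plan is to pass through the continuous semiconjugacy $\pi|_\A$ from \cite{BKS}: if $f^{n_j}(x),f^{n_j}(y)$ accumulate at the same $a\in\A$, uniform continuity of $\pi|_\A$ should force $\pi(x)=\pi(y)$. The missing link is that $\tilde\pi$ is \emph{not} continuous on $\bas(\A)$ (the paper says so explicitly), so closeness of $f^{n_j}(x)$ to $a\in\A$ does not by itself give closeness of $\tilde\pi(f^{n_j}(x))$ to $\pi(a)$; you never connect the basin-defined $\tilde\pi$ to the attractor-defined $\pi$ at nearby points. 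The paper bypasses this by working with the approximants $\tilde\pi_n$: using Proposition~\ref{prop:Qinfty}(c), if $|f^i(x)-f^i(y)|$ is small enough then $f^i(x),f^i(y)$ lie in a common $\D_n$ with $n$ a large cutting time; the drift bound \eqref{eq:driftCBbis} then forces $\langle b_n(x)-i\rangle$ and $\langle b_n(y)-i\rangle$ to differ in at most $Ck$ entries of index $\geq k-B$, whence $|\tilde\pi_n(x)-\tilde\pi_n(y)|\leq Ck\max_{i\geq k-B}|\fp(\rho S_i)|$, which is small by the hypothesis. This is the same estimate you used for convergence of $\tilde\pi_n$, but applied to a \emph{pair} of points sharing a tower level --- that simultaneous comparison is the substance of the argument and is absent from your sketch.
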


\begin{proof} {\bf Step 1: Construction of the factor map.}\\
An {\em enumeration scale} is a symbolic system resembling an
adding machine as in \eqref{eq:addingmachine} based on, in this
case, the sequence of cutting times. Any non-negative integer $n$
can be written in a canonical way as a sum of cutting times: $n =
\sum_j e_j S_j$, where
\[
e_j := \left\{
\begin{array}{ll}
1 &\text{ if } j = \max\{ k; S_k \leq n- \sum_{m > k} e_m S_m \},\\
0 &\text{ otherwise.}
\end{array} \right.
\]
In particular $e_j = 0$ if $S_j > n$. In this way we can code the
non-negative integers as zero-one sequences with a finite
number of ones: $n \mapsto \langle n \rangle \in \{ 0,1 \}^{\N}$.
Let $E_0 = \langle \N \cup \{ 0 \} \rangle$ be the set of such
sequences, and let $E$ be the closure of $E_0$ in the product
topology. This results in
$$
E := \{ e \in \{ 0,1 \}^{\N}\ ;\ e_i = 1 \Rightarrow e_j = 0 \text{
for } Q(i+1) \leq j < i \}.
$$
The condition in this set follows because if $e_i = e_{Q(i+1)} =
1$, then this should be rewritten to $e_i  = e_{Q(i+1)} = 0$ and
$e_{i+1} = 1$. It follows immediately that for each $e \in E$ and
$j \geq 0$,
\begin{equation}\label{rule}
e_0S_0 + e_1 S_1 + \dots + e_j S_j < S_{j+1}.
\end{equation}
We denote by $\a$ the standard addition of $1$ by means of ``add and carry'',
cf.\ \eqref{eq:addingmachine}.
Let $\langle n \rangle$ be the representation of $n \in \N
\cup \{ 0 \}$ in the enumeration scale based on $\{ S_k \}_{k \geq
0}$. Obviously
$\a(\langle n \rangle) = \langle n+1 \rangle$. Under the condition
that $Q(k) \to \infty$, $\a:E \to E$ is continuous, and is
invertible on $E \setminus \{ \langle 0 \rangle \}$, see
\cite{BKS,GLT}.
Since $\sum_k |\fp( \rho S_k )| < \infty$, we can define a continuous
projection $\pi_\rho:E\to \S^1$ by
$$
\pi_\rho(e) = \sum_k e_k \fp(\rho S_k ) \bmod 1,
$$
and $\pi_\rho \circ g = R_\rho \circ \pi_\rho$  for the circle rotation $R_\rho:x \mapsto x+\rho \bmod 1$.
At the same time, the map $P:E \to \A$ defined as the continuous
extension of $P(\langle n \rangle) = f^n(c)$ satisfies $P \circ g = f \circ P$.
\begin{figure}[ht]
\unitlength=10mm
\begin{picture}(6,5)(0,0.7)
\put(2.8,5){$(E,g)$}
\put(3.8,4.8){\vector(1,-1){1}} \put(4.7,4.3){$\pi_\rho$}
\put(2.8,4.8){\vector(-1,-1){1}}  \put(1.8,4.3){$P$}
\put(0.3,3){$(\A,f)$}
\put(4.8,3){$(\S^1,R_\rho)$}
\put(2,3){\vector(1,0){2.5}} \put(2.2,3.2){$\pi = \pi_\rho \circ P^{-1}$}
\put(1,2){$\bigcap$}
\put(0.1,1){$(\bas(\A),f)$}\put(2.8,1.6){\vector(2,1){1.7}}
 \put(3.3,2.1){$\tilde \pi$}
\end{picture}
\label{fig:com_diag}
\end{figure}
We know from \cite{BKS} that there is semi-conjugacy $\pi =
\pi_\rho \circ P^{-1}:\A \to \S^1$ such that $\pi \circ f = R_\rho \circ
\pi$, provided $\sum_k |\fp( \rho S_k )| < \infty$.
A more direct way to construct $\pi:\A \to \S^1$ is by setting
$$
\pi(x) = \left\{ \begin{array}{rl}
\rho n \bmod 1 & \text{ if } x = c_n, \\
\lim_{j\to \infty} \rho n_j  \bmod 1 &
\text{ if } x \in \A \setminus \orb(c) \text{ and } (n_j)_{j \in \N}
\text{ is such that } x \in \cap_j \D_{n_j}.
\end{array} \right.
$$
In \cite{BrHaw} it
was shown how to extend the map $\pi_\rho \circ P^{-1}$ to a measurable
factor map $\tilde \pi:\bas(\A) \to \S^1$.
Here we will give a construction of $\tilde \pi$ which is more closely connected
to \cite{BKS}.

For any $x$ with $\hat \chi_n(x) \to \infty$, the number $b_n(x)$
defined as
$$ \label{eq:bn}
b_n(x) := \max_j \{ j : f^j(Z_j(x)) = \D_n \}
$$
exists. If $x \in f^{-k}(c)$ for some $k \geq 0$, then we need to write
$Z^\pm_n(x)$ for $n > k$, because $x$ is the common boundary point of two
cylinder sets, and $f^n(x) = c_{n-k} \in \D_{n-k}$ for $n$ sufficiently large.
So $b_n(x)$ is well-defined in this case too.
Set
\begin{equation} \label{eq:pin}
\tilde \pi_n(x) := -\sum_k \fp( \rho \langle b_n(x)-n \rangle_k S_k )
\bmod 1 = -\rho(b_n(x)-n) \bmod 1.
\end{equation}
If $n+1$ is not a cutting time, then $b_{n+1}(x) = b_n(x) + 1$; in
this case $\tilde \pi_{n+1}(x) = \tilde \pi_n(x)$. If $n+1 = S_k$ is a cutting
time, $f^{b_n(x)}(Z_{b_n(x)}(x)) = \D_{S_k-1}$ and
$f^{b_n(x)+1}(Z_{b_n(x)}(x)) = \D_{S_k}$, but $b_{n+1}(x)$ can be
strictly larger than $b_{n}(x)+1$. In this case, however,
\[
b_{n+1}(x) = b_{n}(x)+1 + \sum_{j \geq k}  d_j S_{Q(j)}
\]
for some non-negative integers $d_j$.
Recall from Lemma~\ref{lem:hatchi} that for
$\lambda$-a.e.\ $x \in \bas(\A)$, there is $\hat k_0 = \hat k_0(x)$ such that
for all $k > \hat k_0$
\begin{equation}\label{eq:driftCBbis}
\text{ if } F^m(x) \in E_k, \text{ then } F^{m+j}(x) \notin E_k
\text{ for all } j > Ck.
\end{equation}
This means that $\sum_{j \geq k} d_j \leq k$, and therefore
$\langle b_n(x)-n \rangle$ and $\langle b_{n+1}(x)- (n+1) \rangle$
are sequences
which differ by at most $C k$ entries and the indices of these
entries are $\geq k-B$. This means that by the definition of
\eqref{eq:pin}
\begin{equation}\label{eq:Ckmax}
|\tilde \pi_n(x) - \tilde \pi_{n+1}(x)| \leq C k
\max_{i \geq k-B}| \fp( \rho S_i ) |,
\end{equation}
which is summable over $k$ by assumption.
It follows that $\{ \tilde \pi_n(x)\}_n$ is a Cauchy sequence in
$\S^1$ for $\lambda$-a.e.\ $x
\in \bas(\A)$, and hence $\tilde \pi(x) := \lim_{n \to \infty} \tilde \pi_n(x)$
exists.

Let us complete Step 1 by showing that $\tilde \pi \circ f = R_\rho \circ \tilde \pi$ for $\lambda$-a.e.\ $x \in \bas(\A)$.
Assume that $x \notin \cup_j f^{-j}(c)$, then $f(Z_j(x)) = Z_{j-1}(f(x))$
for all sufficiently large $j$.
Therefore $b_n(f(x)) = b_n(x) - 1$, so substituting
into \eqref{eq:pin} gives $\tilde \pi_n(f(x)) = \tilde \pi_n(x) + \rho$ for each
$n$. In the limit, $\tilde \pi \circ f = R_\rho \circ \tilde \pi$.

\begin{remark}
It can be shown that $\tilde \pi$ is well-defined on $\A$ and
coincides with $\pi$, but since it plays no role in Theorem~\ref{thm:DistalAttractor}, we will omit the proof.
\end{remark}

{\bf Step 2: The measure of \boldmath $\Dis$. \unboldmath}\\
We will show that if $\tilde \pi(x) \neq \tilde \pi(y)$, then
$(x,y)$ form a distal pair. This is more involved than in
Proposition~\ref{propFibo1} below, because $\tilde
\pi|_{\bas(\A)}$ is not continuous. With the exception of a set of
measure zero, we can assume that $x$ and $y$ satisfy
\eqref{eq:driftCBbis}; let $k_1 = \max\{ \hat k_0(x),\hat k_0(y)
\}$. Suppose that $\tilde \pi(x) \neq \tilde \pi(y)$ and take $N
\in \N$ and $\eta > 0$ such that $|\tilde \pi_n(x) - \tilde
\pi_n(y)|
> 2 \eta$ for all $n \geq N$. 
Take $k_2 \geq k_1$ so large that
$C k \max_{i \geq k-B} | \fp( \rho S_i ) | < \eta$
for all $k \geq k_2$ and $C$ as in Lemma~\ref{lem:hatchi}.

We know that $\A \subset \cup_{n \geq S_{k_2}} \D_n$, but by
Proposition~\ref{prop:Qinfty}, part (c),
$\omega(c) \subset \cup_{n = 1+S_{k_2}}^{S_{k_2+B}}
\D_n$. Take $\eps > 0$ so small that every two intervals
$\D_n,\D_{n'}$, $S_{k_2}< n, n' \leq S_{k_2+B}$ either are at least
$\eps$ apart or their intersection has length at most $\eps$.

For $\lambda$-a.e. $x \in \bas(\A)$, we have that
$\hat\chi_n(x)\to \infty$. Hence $f^i(x) \in \mathcal D$ for $i$
sufficiently large, even though $\D$ only contains a
``one-sided'' open neighbourhood of $\omega(c)$. (In fact it is
possible to prove that the same statement is true for all $x \in
\bas(\A)$, but this is enough for our purposes.)

Suppose now by contradiction that $(x,y)$ is not distal. Then
there is $i \geq N$ such that $f^i(x), f^i(y) \in \mathcal D$ and
$|f^i(x)-f^i(y)| < \eps$. So $f^i(x)$ and $f^i(y)$ belong to the
same interval $\D_n$ for some $n \geq S_{k_2}$, and taking $i$
larger if necessary, we can assume that $n$ is a cutting time. By
\eqref{eq:driftCBbis}, $\langle b_n(x)-i \rangle$ and $\langle
b_n(y)-i\rangle$ are sequences which differ by at most $C k$
entries and the indices of these entries are $\geq k-B$. Similar
to \eqref{eq:Ckmax}, we have
$$
|\tilde \pi_n(x) - \tilde \pi_n(y)| \leq C k
\max\{ |\fp( \rho S_i )| : k-B \leq i \leq 2k-B \}
< \eta.
$$
This contradiction to the
choice of $\eta$ and $N$ proves that $(x,y)$ is distal.
Therefore proximal pairs $(x,y)$ can only exist within fibers of $\tilde \pi$.

Finally, if $W = \tilde \pi^{-1}(s)$ with $\lambda(W) > 0$ for some $s
\in \S^1$, then since $\tilde \pi \circ f = R_\rho \circ \tilde \pi$ and
$R_\rho$ is invertible, it follows that $f^m(W) \cap f^n(W) =
\emptyset$ for all $0 \leq m < n$, and this contradicts the
non-existence of strongly wandering sets. Therefore each fiber has
measure zero. This completes the proof.
\end{proof}

\begin{corollary}\label{Cor:d=234}
If $S_k = S_{k-1} + S_{k-d}$ for $d = 2,3,4$, and $f$ is a map
with cutting times $\{ S_k\}_{k \geq 0}$ and sufficiently large
critical order so that $\A$ is a wild attractor with
positive drift, then $\lambda_2(\Dis) = 1$.
\end{corollary}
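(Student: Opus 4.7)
The plan is to apply Theorem~\ref{thm:DistalAttractor}, which requires us to exhibit a $\rho \in \R$ such that $\sum_k k \max_{i \geq k-B} |\fp(\rho S_i)| < \infty$. First, the recurrence $S_k = S_{k-1} + S_{k-d}$ combined with the defining identity $S_k = S_{k-1} + S_{Q(k)}$ forces $Q(k) = k-d$ for all $k$ past the initial data, hence $B = \limsup_k (k - Q(k)) = d$. So it suffices to find $\rho$ for which $|\fp(\rho S_k)|$ decays exponentially in $k$.

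The characteristic polynomial of the linear recursion is $p(x) = x^d - x^{d-1} - 1$. For $d \in \{2,3,4\}$, a direct check shows that $p$ has a single real root $\alpha > 1$ while the remaining roots $\alpha_2,\dots,\alpha_d$ lie strictly inside the open unit disk; that is, $\alpha$ is a Pisot number. (For $d=2$ this is the golden mean; for $d=3,4$ one verifies the claim by an elementary computation with Vieta's relations or by inspecting the discriminant.) Consequently we can write
$$
S_k \;=\; A\alpha^k + \epsilon_k, \qquad \epsilon_k \;:=\; \sum_{j=2}^{d} A_j \alpha_j^k, \qquad |\epsilon_k| \leq C \mu^k,
$$
where $\mu := \max_{2 \leq j \leq d} |\alpha_j| < 1$ and the constants $A, A_j$ are determined by the initial cutting times.

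Now set $\rho := 1/\alpha$. A direct substitution gives
$$
\rho S_k - S_{k-1} \;=\; \frac{A\alpha^k + \epsilon_k}{\alpha} - A\alpha^{k-1} - \epsilon_{k-1} \;=\; \frac{\epsilon_k}{\alpha} - \epsilon_{k-1},
$$
so $|\rho S_k - S_{k-1}| \leq C' \mu^{k-1}$. Since $S_{k-1} \in \Z$, for all $k$ large enough that $C' \mu^{k-1} < 1/2$ we have $\text{round}(\rho S_k) = S_{k-1}$, and hence $|\fp(\rho S_k)| = |\rho S_k - S_{k-1}| = O(\mu^k)$. Consequently
$$
\sum_k k \max_{i \geq k-B} |\fp(\rho S_i)| \;\leq\; C'' \sum_k k\, \mu^{k-d} \;<\; \infty,
$$
and the hypothesis of Theorem~\ref{thm:DistalAttractor} is satisfied, giving $\lambda_2(\Dis) = 1$.

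The main obstacle, and the reason for the restriction $d \in \{2,3,4\}$, is the verification that $\alpha$ is Pisot: this property of $p(x) = x^d - x^{d-1} - 1$ fails for sufficiently large $d$, at which point $|\fp(\rho S_k)|$ need no longer decay (let alone exponentially) for any choice of $\rho$, and a genuinely different argument would be required.
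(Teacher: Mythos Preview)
Your proof is correct and follows essentially the same route as the paper: identify the Pisot property of the leading root of $x^d - x^{d-1} - 1$ for $d=2,3,4$, deduce exponential decay of $|\fp(\rho S_k)|$, and invoke Theorem~\ref{thm:DistalAttractor}. The only cosmetic difference is that the paper takes $\rho$ to be the Pisot root $\alpha$ itself (using $\alpha S_k - S_{k+1} = O(\mu^k)$), whereas you take $\rho = 1/\alpha$ (using $S_k/\alpha - S_{k-1} = O(\mu^k)$); both choices work for the same reason.
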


\begin{proof}
We know from \cite{BKS} that the dynamics  $(\A, f)$  is
semi-conjugate to a minimal rotation on a $d-1$-dimensional torus.
If fact, the characteristic equation  $\lambda^d = \lambda^{d-1}
+ 1$ of the recursive relation $S_k = S_{k-1} + S_{k-d}$ has a
leading root $\rho$ which is a Pisot number, \ie all its algebraic
conjugates lie within the unit disk. It follows easily that $\fp(
\rho S_k )$ is exponentially small in $k$ so that the condition
$\sum_k k |\fp( \rho S_k )| < \infty$ is obviously satisfied, and
Theorem~\ref{thm:DistalAttractor} shows that $\lambda_2(\Dis) = 1$.
By defining $\Pi:\bas(\A) \to \T^{d-1}$ as $\Pi(x) =
(\tilde \pi_\rho(x), \dots , \tilde \pi_{\rho^{d-1}}(x))$
(which is well-defined $\lambda$-a.e.), we obtain a factor
map onto $(\T^{d-1}, R_{\rho,\dots,\rho^{d-1}})$ with Haar
measure, which is the maximal automorphic factor of
$(\bas(\A), \lambda, f)$.
\end{proof}

\begin{proposition}\label{prop:scrambled_fibers}
Under the conditions of Theorem~\ref{thm:DistalAttractor}
and if there is a continuous factor of
$(\A, f)$ onto a $d$-dimensional torus $\T^d$
(with $d \geq 1$),
then the fiber $\tilde \pi^{-1}(\tau) \subset \bas(\A)$ for each
$\tau \in \T^d$ contains an uncountable
$\eps$-scrambled set for some $\eps > 0$.
\end{proposition}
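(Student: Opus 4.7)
The plan is to fix $\tau \in \T^d$ and construct, by symbolic dynamics on the Hofbauer tower, an uncountable family $\{x_\alpha\}_{\alpha \in \{0,1\}^\N} \subset \tilde\pi^{-1}(\tau)$ that is pairwise $\eps$-Li--Yorke for some $\eps > 0$ depending only on $f$. First I would extract a $\pi$-fibre of positive diameter: since $\pi: \A \to \T^d$ is a continuous surjection from a totally disconnected Cantor set onto a connected torus it cannot be injective, so the function $\sigma \mapsto \diam(\pi^{-1}(\sigma))$ is upper semicontinuous and attains a positive maximum $3\eps$ at some $\sigma_0$; choose $p_0, q_0 \in \pi^{-1}(\sigma_0) \cap \A$ with $|p_0 - q_0| \geq 3\eps$. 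Minimality of the torus rotation $R$ then produces a sequence $n_k \to \infty$ with $R^{n_k}\tau \to \sigma_0$.

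Next I would parametrise $\tilde\pi^{-1}(\tau)$ symbolically. Every $x \in \bas(\A)$ with $\hat\chi_n(x) \to \infty$ has an itinerary under the induced Markov map $F:\sqcup_l E_l \to \sqcup_l E_l$, and by \eqref{eq:pin} the value $\tilde\pi(x)$ depends on this itinerary only through the tail of the integer sequence $b_n(x) - n$ via the absolutely convergent series involving $\fp(\rho_j S_k)$, one for each generator $\rho_j$ of the torus factor. Perturbations of the itinerary supported on entries of index $\geq k-B$ change each coordinate of $\tilde\pi$ by at most $Ck \max_{i \geq k-B}|\fp(\rho_j S_i)|$, which is summable by hypothesis; the two branches at each level in the transition diagram of Figure~\ref{fig:Markov} thus give enough freedom to stay within $\tilde\pi^{-1}(\tau)$ while altering orbits at arbitrarily high levels.

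With this freedom I would carry out a Cantor-scheme construction: for each $\alpha \in \{0,1\}^\N$, select the $F$-branches of $x_\alpha$ so that at time $n_k$ the iterate $f^{n_k}(x_\alpha)$ lies within distance $\eps/2$ of $p_0$ when $\alpha_k = 0$ and of $q_0$ when $\alpha_k = 1$, while at interspersed times $m_k$ both $x_\alpha$ and any other $x_\beta$ are forced to share a common Hofbauer cylinder $\D_n$ of arbitrarily large index. By Proposition~\ref{prop:Qinfty}(b) and the non-Contraction Principle (Proposition~\ref{prop:contraction}), $\diam(\D_n) \to 0$, so $|f^{m_k}(x_\alpha) - f^{m_k}(x_\beta)| \to 0$; for $\alpha \neq \beta$ the infinitely many $k$ with $\alpha_k \neq \beta_k$ give $|f^{n_k}(x_\alpha) - f^{n_k}(x_\beta)| \geq 2\eps$. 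Injectivity of $\alpha \mapsto x_\alpha$ then makes $\{x_\alpha\}$ an uncountable $\eps$-scrambled subset of $\tilde\pi^{-1}(\tau)$, and Li--Yorke sensitivity on $\bas(\A)$ follows by translating such sets through $\bas(\A)$ via backward iterates.

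The principal difficulty is to realise this combinatorial schedule while keeping $\tilde\pi(x_\alpha) = \tau$ exactly: each choice between the two branches of $F|_{E_l}$ that is forced by the bit $\alpha_k$ also alters the remainder of the series defining $\tilde\pi$, and compensating for this at subsequent levels requires precisely the tail bookkeeping used in the proof of Theorem~\ref{thm:DistalAttractor} and rests on the same summability hypothesis on $\{S_k\}$. Once this compensation is implemented step by step in a nested-interval construction inside each $E_l$, the resulting Cantor set of endpoints indexed by $\{0,1\}^\N$ is the required uncountable $\eps$-scrambled subset of $\tilde\pi^{-1}(\tau)$.
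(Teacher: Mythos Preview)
Your overall architecture is right --- code points of $\tilde\pi^{-1}(\tau)$ by itineraries in the induced Markov system $\{E_l\}$, use the branching at each $E_l$ to build a $\{0,1\}^\N$-family, and arrange that different codes produce orbits that separate by a definite amount infinitely often while agreeing on a common deep cylinder infinitely often. You also correctly isolate the crux: each binary choice perturbs the series defining $\tilde\pi$, and one must end up exactly at $\tau$ for every code.

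The gap is that your resolution of this crux (``compensating at subsequent levels'') is not a mechanism, and it is not clear it can be made into one. The freedom you have at level $l$ is a single binary choice between $E_{l+1}$ and $E_{Q(l+1)}$; the resulting shifts in $\tilde\pi$ form a very specific sequence, and there is no reason they should span the corrections you need while simultaneously meeting your schedule of visits to neighbourhoods of $p_0$ and $q_0$ at the externally prescribed times $n_k$ (which you chose from the torus rotation, not from the tower). The paper sidesteps compensation entirely by a trick you are missing: it chooses the two alternatives at stage $j$ to be \emph{loops that use exactly the same set of Markov arrows, merely in a different order}. Concretely, it finds $a\neq\hat a$ in the same $\pi$-fibre (via $f(a)=f(\hat a)$, which forces $\pi(a)=\pi(\hat a)$), locates nearby levels $\kappa_j,\hat\kappa_j$ with $|\kappa_j-\hat\kappa_j|\le B$, and builds a loop $E_{\kappa_j}\to E_{\kappa_j}$ and a loop $E_{\hat\kappa_j}\to E_{\hat\kappa_j}$ that traverse the \emph{same} forward and backward edges in different order, followed by a common path $E_{\hat\kappa_j}\to E_{\kappa_j}$. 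Because the edge sets coincide, both extended loops take the same number $s_j+t_j$ of $f$-iterates and contribute the \emph{identical} increment to each $\tilde\pi_n$. Hence the total shift $\eps=\tilde\pi(y)-\tilde\pi(x_\sigma)$ is independent of $\sigma$, and one simply starts from a base path $y$ with $\tilde\pi(y)=\tau+\eps$. No compensation is needed, and the separating times $r_j$ are dictated by the loop structure, not by the rotation. Your plan would become a proof if you replaced the vague compensation step by this ``same arrows, different order'' device; without it the construction does not close.
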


\begin{proof}
Assume for simplicity that $d=1$ and take $\rho \in \R$ such that
$\tilde \pi \circ f = \tilde \pi + \rho \bmod 1$ for $\lambda$-e.a.\ $x \in \A$.
Because a zero-dimension set cannot be mapped injectively onto a set
of higher dimension, see \cite{Engel},
the continuity of the factor map $\pi:\A \to \T^d$ implies
that $\pi$ cannot be injective.
Therefore we can find $a \neq \hat a \in \A$
such that $f(a) = f(\hat a)$.
By Proposition~\ref{prop:Qinfty}, part (c),
we can find, for every $k$,
integers $\kappa$ and $\hat \kappa$ with $k < \kappa, \hat \kappa \leq k+B$
and $N \leq \min(S_\kappa, S_{\hat \kappa})$,
such that $\D_{S_{\kappa} - N} \owns a$ and
$\D_{S_{\hat \kappa} - N} \owns \hat a$.

Hence we can find two sequences $(\kappa_j)_{j \in \N}$ and
$(\hat \kappa_j)_{j \in \N}$ with $|\kappa_j - \hat \kappa_j| \le B$, but
possibly $\kappa_{j+1} \gg \kappa_j$, and another sequence $(N_j)_{j \in \N}$
such that $\D_{S_{\kappa_j} - N_j} \owns a$
and $\D_{S_{\hat\kappa_j} - N_j} \owns \hat a$.

Recall the Markov map was defined as
$F: \sqcup_{l \geq 0} E_l \to \sqcup_{l \geq 0} E_l$.
For each $j$, we will create loops from $E_{\kappa_j}$ to itself
and  from $E_{\hat \kappa_j}$ to itself, as indicated in
Figure~\ref{fig:loops}. Both loops require the same steps under $F$,
only arranged in a different order, hence they involve the same number
$s_j$ of iterates of $f$.
Because $\limsup_k k - Q(k) \leq B$, both loops involve no more than $2B$ steps,
and the width (highest vertex minus smallest vertex) is less than $2B$ as well.

\begin{figure}[ht]
\unitlength=8mm
\begin{picture}(16,3.5)(1.5,0.0)
\put(4,3){\circle*{0.18}}
\put(3.8,3.4){$E_{\kappa_j}$} \put(6.8,3.4){$E_{\hat \kappa_j}$}
\put(4.2,3){\vector(1,0){0.6}} \put(5,3){\circle{0.18}}
\put(5.2,3){\vector(1,0){0.6}} \put(6,3){\circle{0.18}}
\put(6.2,3){\vector(1,0){0.6}} \put(7,3){\circle*{0.18}}
\put(6.8,2.95){\vector(-2,-1){1.6}} \put(5,2){\circle{0.18}}
\put(4.8,1.95){\vector(-2,-1){1.6}} \put(3,1){\circle{0.18}}
\put(3.2,1){\vector(1,0){0.6}} \put(4,1){\circle*{0.18}}
 \put(3,0.2){loop $E_{\kappa_j} \to E_{\kappa_j}$}
\put(12,3){\circle*{0.18}}
\put(11.8,2.95){\vector(-2,-1){1.6}} \put(10,2){\circle{0.18}}
\put(9.8,1.95){\vector(-2,-1){1.6}} \put(13,1){\circle{0.18}}
\put(8.2,1){\vector(1,0){0.6}} \put(9,1){\circle*{0.18}}
\put(8,0.2){path $E_{\hat \kappa_j} \to E_{\kappa_j}$}
\put(13.8,3.4){$E_{\kappa_j}$} \put(16.8,3.4){$E_{\hat \kappa_j}$}
\put(17,3){\circle*{0.18}}
\put(16.8,2.95){\vector(-2,-1){1.6}} \put(15,2){\circle{0.18}}
\put(14.8,1.95){\vector(-2,-1){1.6}} \put(13,1){\circle{0.18}}
\put(13.2,1){\vector(1,0){0.6}} \put(14,1){\circle*{0.18}}
\put(14.2,1){\vector(1,0){0.6}} \put(15,1){\circle{0.18}}
\put(15.2,1){\vector(1,0){0.6}} \put(16,1){\circle{0.18}}
\put(16.2,1){\vector(1,0){0.6}} \put(17,1){\circle*{0.18}}
\put(13,0.2){loop $E_{\hat \kappa_j} \to E_{\hat \kappa_j}$}
\end{picture}
\caption{Different loops ending at $E_{\kappa_j}$ and $E_{\hat \kappa_j}$
respectively. Both loops contain the same path from
$E_{\hat \kappa_j}$ to $E_{\kappa_j}$, depicted in the middle.
Backward arrows go from $E_l$ to $E_{Q(l+1)}$.}
\label{fig:loops}
\end{figure}
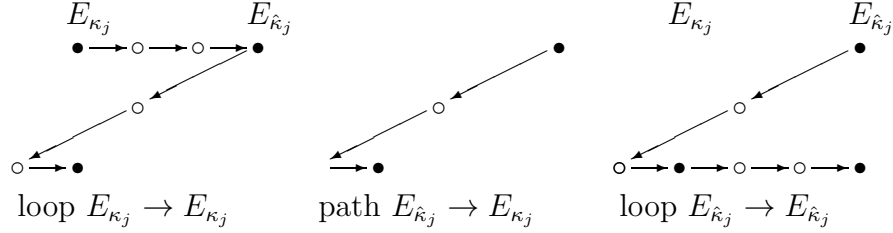

Assume that $\kappa_j < \hat \kappa_j$ and $E_{\kappa_j} \supset E_{\hat \kappa_j}$. (The other three cases can be treated similarly.)
Since $\{ E_l\}_{l \geq 0}$ is a Markov partition for $F$, there are intervals
$J_j \subset E_{\kappa_j}$ and
$\hat J_j \subset E_{\hat \kappa_j}$
such that $f^{s_j}: J_j \to E_{\kappa_j}$ and
$f^{s_j}: \hat J_j \to E_{\hat \kappa_j}$ are diffeomorphic and onto.
There is a similar interval $\hat K_j \subset E_{\hat \kappa_j}$
representing the path from $E_{\hat \kappa_j}$ to $E_{\kappa_j}$,
that is if the path from $E_{\hat \kappa_j}$ to $E_{\kappa_j}$
requires $t_j$ iterates of $f$, then
$f^{t_j}: \hat K_j \to E_{\kappa_j}$ is diffeomorphic and onto.

Combining the two, we find intervals $H_j \subset J_j$
such that
$$
\left\{ \begin{array}{ll}
(a) & f^{s_j}(H_j) = \hat K_j \subset E_{\hat \kappa_j} \subset  E_{\kappa_j}, \\[1mm]
(b) & f^{s_j+t_j}(H_j) =  E_{\kappa_j}, \text{ and } \\[1mm]
(c) & f^{s_j-(N_j+1)}(H_j) \text{ contains, or is close to, } a. \end{array} \right.
$$
Similarly, we can find $\hat H_j \subset \hat J_j$ such that
$$
\left\{ \begin{array}{ll}
(\hat a) & f^{s_j}(\hat H_j) = \hat K_j \subset E_{\hat \kappa_j},\\[1mm]
(\hat b) & f^{s_j+t_j}(\hat H_j) =  E_{\kappa_j}, \text{ and } \\[1mm]
(\hat c) & f^{s_j-(N_j+1)}(\hat H_j) \text{ contains, or is close to, } \hat a.
\end{array} \right.
$$
Let $\Sigma \subset \{ 0, 1\}^\N$ be an uncountable
scrambled subset of the full shift.
The idea is now, for each $\tau \in \T$ and each $\sigma \in \Sigma$,
to find a point $x \in \pi^{-1}(\tau)$ such that
\begin{equation}\label{eq:x}
f^{r_j}(x) \in \left\{  \begin{array}{ll}
\D_{S_{\kappa_j} - N_j} \owns a  & \text{ if } \sigma_j = 0, \\
\D_{S_{\hat \kappa_j}-N_j}\owns \hat a & \text{ if } \sigma_j = 1,
\end{array}\right.
\end{equation}
where the sequence $r_j$ depends of $t$ but not on $\sigma$.

Start with some $y \in \D_2 = E_0$ with $\tilde \pi(y) = \tau+\eps$ where
$\eps$ will be determined later.
Then, when the orbit of $y$ under iteration of $F$ goes from $E_{\kappa_j}$ to
$E_{\hat \kappa_j}$, we insert one of the loops as in Figure~\ref{fig:loops}
according to whether $\sigma_j = 0$ or $1$, and the extra path from $E_{\kappa_j}$ to $E_{\hat \kappa_j}$.
That is, when $q_1$ is such that $f^{q_1}(y) \in E_{\kappa_j}$,
we insert one of the extended loops, both taking $s_1 + t_1$ iterates,
and iterate $r_1 := q_1+s_1-(N_1+1)$ brings the path close to $a$ or $\hat a$,
depending on whether $\sigma_1 = 0$ or $1$.
Then, when $y$ visit $E_{\kappa_2}$, the new extended loop takes $s_1 + t_1$ iterates more to reach it; call this number $q_2$, insert the
appropriate extended loop of $s_2+t_2$ iterates, and find that after
$r_2 = q_2+s_2-(N_2+1)$ iterates,
the path will be close to $a$ or $\hat a$, etc.

Due to the Markov property, there is some $x \in E_0$ whose infinite path
under $F$ is precisely the path we have created, so $x$ satisfies \eqref{eq:x}.
Furthermore, the values $b_n(x) = b_n(y) + \sum_{j, q_j + B < n} s_j + t_j$.
When we compute $\tilde \pi(x)$, the contribution $\eps$ of all the inserted
extended loops bounded by
$$
\sum_j 3B \max_{\kappa_j - B \leq i \leq \kappa_j+B} |\fp(\rho S_i)| < \infty.
$$
Here we used that each loop requires at most $2B$ steps
and each path from $E_{\kappa_j}$ to $E_{\hat \kappa_j}$ (or vice versa)
has at most
$B$ steps, and that all these steps are taken within a distance
$B$ from the step $E_{\kappa_j} \to E_{\kappa_j+1}$.

In particular, $\tilde \pi(x)$ is well-defined, and
$\tilde \pi(x) = \tilde \pi(y) - (\tilde \pi(y) - \pi(x)) = \tilde \pi(y) - \eps = \tau$, so $x \in \tilde \pi^{-1}(\tau)$.

Finally, if $\sigma \neq \sigma'$ both belong to $\Sigma$, there are infinitely many
$j$s such that $\sigma_j \neq \sigma'_j$, and consequently
$\limsup_{j\to\infty} | f^{r_j}(x) - f^{r_j}(x') |
= |a - \hat a| > 0$.
On the other hand, $\lim_{j\to\infty} | f^{q_j}(x) - f^{q_j}(x') | = 0$,
because  $f^{q_j}(x)$ and $f^{q_j}(x')$ both belong to $E_{\kappa_j}$ or to
 $E_{\hat \kappa_j}$
Therefore $\{ x(\sigma) : \sigma \in \Sigma \}$ is an
$|a - \hat a|$-scrambled subset of $\pi^{-1}(\tau)$, as required.
\end{proof}

\begin{proposition}\label{propFibo1}
Let $f$ be a unimodal map with kneading map $Q(k) = \max\{ k-d, 0
\}$ for $d = 2, 3, 4$, and let $\mu$ denote the unique
invariant probability measure supported by $\omega(c)$. Then
\begin{itemize}
\item $\mu_2$-a.e.\ pair of points is distal;
\item if $d = 2$ (the Fibonacci map) then
$\omega(c)$ contains no Li-Yorke pair, and the only asymptotic
pairs $(x,y)$ are such that $f^n(x) = f^n(y) = c$ (only one such pair for
each $n \ge 1$);
\item if $d = 3,4$ then there are uncountably many Li-Yorke pairs in $\omega(c)$.
\end{itemize}
\end{proposition}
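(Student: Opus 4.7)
My plan hinges on the continuous factor $\pi:\A\to\T^{d-1}$ constructed in Corollary~\ref{Cor:d=234}, which intertwines $f|_\A$ with the minimal isometric rotation $R_{(\rho,\dots,\rho^{d-1})}$ on the torus. First I would show that pairs with distinct $\pi$-image are distal: the isometry $R$ preserves the torus distance $d_{\T^{d-1}}(\pi f^n x,\pi f^n y)$, and the uniform continuity of $\pi$ on the compact set $\A$ turns this into a lower bound on $|f^n x-f^n y|$ independent of $n$. Since $\mu$ is the unique $f$-invariant probability on $\A$, $\pi_*\mu$ must coincide with Haar measure, which is non-atomic; a disintegration/Fubini argument then yields $\mu_2(\{\pi(x)=\pi(y)\})=0$ and proves the first bullet.

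For the Fibonacci case $d=2$, the first step reduces the task to describing the fibres of $\pi:\A\to\S^1$. I would analyse the enumeration scale $(E,g)$ from the proof of Theorem~\ref{thm:DistalAttractor} together with the character $\pi_\rho(e)=\sum_k e_k\,\fp(\rho S_k)\bmod 1$. The golden-mean identity $\rho S_k=\rho S_{k-1}+\rho S_{k-2}$ (exact once $k$ is large enough) implies that the only $\Z$-linear relations $\pi_\rho(e)=\pi_\rho(\hat e)$ in $E$ come from Fibonacci rewriting, and pulling these back via $P:E\to\A$ I expect them to be exactly the pairs $(x,y)\in\A^2$, $x\neq y$, with $f^m(x)=f^m(y)=c$ for some $m\geq 1$. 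Each such pair is asymptotic by definition, and for each $m$ there is a unique such pair, namely $(\tau_c(z),z)$ with $\{z\}=f^{-(m-1)}(c)\cap\A$. Combined with the first bullet, this rules out Li-Yorke pairs in $\A^2$.

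For $d=3,4$ the target torus has dimension $\geq 2$ while $\A$ is totally disconnected, so $\pi$ cannot be injective; by the unimodal symmetry $\tau_c$ one may in fact pick $a\neq\hat a\in\A$ with $f(a)=f(\hat a)$. I would then adapt the loop-insertion construction from the proof of Proposition~\ref{prop:scrambled_fibers} to work intrinsically inside $\A$: for each $\sigma$ in an uncountable scrambled set $\Sigma\subset\{0,1\}^{\N}$, prescribe at each visit to the partition element $E_{\kappa_j}$ the loop forcing the orbit close to $a$ when $\sigma_j=0$ and close to $\hat a$ when $\sigma_j=1$. The Pisot character of $\rho$ for $d=3,4$ makes $\sum_k k\max_{i\geq k-B}|\fp(\rho^j S_i)|<\infty$ for every $j=1,\dots,d-1$, so the estimates from the proof of Theorem~\ref{thm:DistalAttractor} still apply and every partial itinerary has a well-defined $\pi$-image. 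Realising the itinerary as a nested intersection of critical cylinders produces a genuine point $x(\sigma)\in\A$. For $\sigma\neq\sigma'$ in $\Sigma$ the pair $(x(\sigma),x(\sigma'))$ then satisfies $\limsup_n|f^n x(\sigma)-f^n x(\sigma')|\geq|a-\hat a|>0$ (from indices $j$ with $\sigma_j\neq\sigma_j'$) and $\liminf_n|f^n x(\sigma)-f^n x(\sigma')|=0$ (from the synchronisation at the common partition returns), producing uncountably many Li-Yorke pairs in $\A$.

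The principal obstacle is the last step: the loop construction of Proposition~\ref{prop:scrambled_fibers} naturally lives in $\bas(\A)$, and not every admissible itinerary in $\sqcup_l E_l$ descends to a point of the minimal Cantor set $\A$ itself. The technical heart of the argument is thus to check that the specific loops needed here can be realised as nested cylinders whose intersection is a single point of $\A$, and that the resulting correspondence $\sigma\mapsto x(\sigma)$ remains injective on an uncountable $\Sigma$. In the Fibonacci case, the same technical point runs in the opposite direction: one must verify that no pair of admissible sequences in $E$ other than the Fibonacci rewrites produces the same $\pi_\rho$-value, so that the list of non-trivial fibres really is as small as claimed.
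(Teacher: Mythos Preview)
Your approach to the first bullet is essentially the paper's: factor through the torus rotation, use that $\pi(x)\neq\pi(y)$ forces distality (this is the continuous version of the argument in Step~2 of the proof of Theorem~\ref{thm:DistalAttractor}, and is simpler here because $\pi|_{\A}$ is genuinely continuous), and note that $\pi_*\mu$ is Haar so the diagonal has $\mu_2$-measure zero. The paper simply cites \cite{BKS} for this last step, but your uniqueness-plus-disintegration argument is correct and amounts to the same thing.

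For the remaining two bullets the paper takes a different and much shorter route than you propose. Rather than analysing $\pi_\rho$-coincidences in $E$ directly or running a loop-insertion construction, the paper invokes a structural fact about the fibres of $\pi$ (referenced to \cite{BKS} and \cite{Btree}): each fibre $\pi^{-1}(b)$ has at most $d$ points, and the fibres with the full $d$ points are precisely those whose elements $a_1,\dots,a_d$ satisfy $f^n(a_1)=\dots=f^n(a_d)=c$ for some $n\ge 0$. For $d=2$ this already says every non-singleton fibre is a single such asymptotic pair, which gives the Fibonacci statement with no further work. For $d=3,4$ there exist non-singleton fibres \emph{not} of this pre-critical type, and the paper asserts that Li-Yorke pairs live inside such fibres, deferring the details to the proximality analysis of substitution shifts in \cite{BD}. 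No explicit construction of scrambled points inside $\A$ is attempted.

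Your proposed route for $d=3,4$ runs into exactly the obstacle you name, and it is a real one: the loop construction of Proposition~\ref{prop:scrambled_fibers} exploits the freedom in $\bas(\A)$ to insert arbitrary admissible loops into an itinerary, but points of the minimal set $\A=\omega(c)$ have their $E_l$-itineraries rigidly determined by the substitution structure, so one cannot freely splice loops and stay in $\A$. The paper sidesteps this by appealing to the substitution-shift description rather than building points by hand. If you want your construction to go through, you would effectively need to re-derive the incidence/proximality results of \cite{BD} for the particular substitutions coming from $Q(k)=k-d$; that is feasible but substantially more than the paper does, and the cleaner path is to use the fibre-cardinality description directly.
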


\begin{proof}
As in Corollary~\ref{Cor:d=234}, there is a continuous map
$\pi:\omega(c) \to \T^{d-1}$ onto the $d-1$-dimensional torus and
an irrational rotation $R := R_{\rho, \dots, \rho^d}: \T^d \to \T^d$ such
that $\pi \circ f = R \circ \pi$.
If $\pi(x) \neq \pi(y)$ then
$(x,y)$ is distal, as in Corollary~\ref{Cor:d=234}.
It follows from \cite{BKS} that this happens for $\mu_2$-a.e.\ $(x,y)$.

Furthermore, $\pi^{-1}(b)$ consists of at most $d$
points $a_1, \dots , a_d$ for any $b \in \T^{d-1}$.
If there are indeed $d$ distinct points, then there is $n \geq 0$ such that
$f^n(a_1) = f^n(a_2) = \dots
= f^n(a_d) = c$, cf.\ \cite{Btree}.
For the Fibonacci map, this accounts for all non-distal pairs.
For $d = 3,4$, other non-singleton fibers $\pi^{-1}(b)$ are possible,
and Li-Yorke pairs exists within such fibers.
They are related to incidences in the substitution shift description, as
described implicitly in \cite{BD}.
\end{proof}

The question is whether the situation is the same for the ``next''
Fibonacci-like map with $Q(k) = \max\{ k-5,0 \}$. In this case,
the system of $(\omega(c), f)$ with its unique probability measure
is weak mixing, so there is no continuous (or even measurable)
factor map onto a group rotation. The difference with the
previous cases is that the characteristic equation of the
recursive relation
\[
0 = \lambda^5 - \lambda^4 - 1 =
(\lambda^2-\lambda+1)(\lambda^3-\lambda-1)
\]
is reducible, and more decisively, its leading root is not a
Pisot number. The following curiosity about the cutting times
holds in this case:
\begin{equation}\label{eq5Fib}
S_k = S_{k-2} + S_{k-3} + \left\{
\begin{array}{rl}
+ 1 & \text{ if } k \equiv 2,3 \bmod 6; \\
- 1 & \text{ if } k \equiv 5,0 \bmod 6; \\
  0 & \text{ if } k \equiv 1,4 \bmod 6.
\end{array} \right.
\end{equation}
Note that the same algebraic
curiosity holds for any characteristic equation $\lambda^{6m-1} -
\lambda^{6m-2} - 1 = 0$, because in each such case,
$\lambda^2-\lambda+1$ (with solutions $\lambda = \frac{1\pm
i\sqrt{3}}{2}$ on the unit circle) divides the equation. As an
example, the case $m=2$ gives:
\[
\lambda^{11} - \lambda^{10} - 1 = (\lambda^2-\lambda+1)
(\lambda^9-\lambda^7 - \lambda^6 + \lambda^4 +
\lambda^3-\lambda-1),
\]
and one can indeed check that
\[
S_k = S_{k-2} + S_{k-3} - S_{k-5} - S_{k-6} + S_{k-8} + S_{k-9}
 + \left\{
\begin{array}{rl}
+ 1 & \text{ if } k \equiv 2,3 \bmod 6; \\
- 1 & \text{ if } k \equiv 5,0 \bmod 6; \\
  0 & \text{ if } k \equiv 1,4 \bmod 6.
\end{array} \right.
\]

\begin{proposition}\label{propLY}
If $S_k = S_{k-1} + S_{Q(k)}$ for $Q(k) = \max\{0,k-5\}$ and $f$
is a map with cutting times $\{ S_k\}_{k \geq 0}$ and a wild
attractor with positive drift, then $\lambda_2(\LY_\eps) = 1$
for some $\eps > 0$.
\end{proposition}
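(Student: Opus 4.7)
The plan is to exploit the weak mixing of $(\A,f,\mu)$ — which holds in the present case by \cite{BKS} because the relevant characteristic polynomial factors as $\lambda^5-\lambda^4-1=(\lambda^2-\lambda+1)(\lambda^3-\lambda-1)$, whose non-real roots sit on the unit circle and obstruct the construction of any topological-group factor as in Theorem~\ref{thm:DistalAttractor}. From this weak mixing I will first extract $\LY_\eps$ on $\A\times\A$ w.r.t.\ $\mu\times\mu$, and then transport the conclusion to Lebesgue-typical pairs in $\bas(\A)^2$ via an induced Markov map on the Hofbauer tower.

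First, I would invoke \cite{BKS} to assert that $(\A,f,\mu)$ is weak mixing w.r.t.\ its unique $f$-invariant probability $\mu$. By the definition in Subsection~\ref{subsec:ergodic}, this gives that the Cartesian square $(\A\times\A,\mu\times\mu,f\times f)$ is ergodic; it is obviously conservative, since $\mu\times\mu$ is a probability and $\A\times\A$ is compact metric, so Lemma~\ref{lem:stronger} yields that $\mu\times\mu$-a.e.\ pair $(x,y)\in\A\times\A$ has a dense $f\times f$-orbit in $\supp(\mu\times\mu)=\A\times\A$. Any such orbit visits every neighborhood of the diagonal of $\A$, giving $\liminf_n|f^n(x)-f^n(y)|=0$, and also visits pairs $(u,v)$ with $|u-v|\ge\diam(\A)/2$, giving $\limsup_n|f^n(x)-f^n(y)|\ge\diam(\A)/2$. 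Hence $\mu\times\mu$-a.e.\ pair in $\A\times\A$ lies in $\LY_{\eps_0}$ for $\eps_0=\diam(\A)/2$.

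Next, to transfer this to $\lambda_2$ on $\bas(\A)^2$, I would use the induced Markov framework of Section~\ref{sec:attractors}: the positive-drift hypothesis (Lemma~\ref{lem:hatchi}) ensures that for $\lambda$-a.e.\ $x\in\bas(\A)$, the orbit is eventually absorbed into the tower $\sqcup_{l\ge 0}E_l$ and that the induced Markov map $F$ has well-defined iterates there. With the uniform distortion bounds of Theorem~\ref{thm:lemma11}(iii), a Folklore-Theorem argument produces an $F$-invariant probability $\tilde\mu$ equivalent to Lebesgue on the induced-map domain $D\subset I$; the natural Markov projection identifies $(\A,f,\mu)$ as a measure-theoretic factor of $(D,F,\tilde\mu)$, so the weak mixing of the base lifts to give weak mixing of $(F,\tilde\mu)$ and consequently ergodicity of $(F\times F,\tilde\mu\times\tilde\mu)$. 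Because $\tilde\mu\sim\lambda|_D$, Lemma~\ref{lem:stronger} applied to $F\times F$ on $D\times D$ gives that $\lambda_2$-a.e.\ pair in $D\times D$ has dense $(F\times F)$-orbit, and therefore its $(f\times f)$-orbit visits every open subset of $\A\times\A$. Repeating the geometric argument of the previous paragraph, $\liminf_n|f^n(x)-f^n(y)|=0$ and $\limsup_n|f^n(x)-f^n(y)|\ge\eps$ for $\lambda_2$-a.e.\ $(x,y)\in\bas(\A)^2$, where $\eps=\diam(\A)/3$.

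Finally, because $\A$ is a wild (not solenoidal) attractor, no point of $\bas(\A)$ is approximately periodic, so $\bas(\A)\cap\AsPer=\emptyset$, and Theorem~\ref{mainthm:Asymp} yields $\lambda_2(\Asymp\cap\bas(\A)^2)=0$. Combined with the previous step and $\lambda_2(\bas(\A)^2)=1$, this proves $\lambda_2(\LY_\eps)=1$ as required. The main obstacle is the lift of weak mixing from $(\A,f,\mu)$ to the induced system $(D,F,\tilde\mu)$: since $\mu$ is singular with respect to Lebesgue, one cannot reason by absolute continuity alone, and the Markov suspension must be set up with enough care that the spectral type of $f$ on $(\A,\mu)$ — in particular the absence of non-trivial measurable eigenvalues that would survive the $\pm 1$ corrections in the relation $S_k=S_{k-2}+S_{k-3}\pm\delta_k$ — is genuinely inherited by $F$; this is exactly the point where the non-Pisot nature of the leading root of $\lambda^3-\lambda-1$ (unlike the cases in Corollary~\ref{Cor:d=234}) must be used decisively.
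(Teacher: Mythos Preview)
Your approach has a genuine gap in the transfer step, and it is precisely the step you flag as ``the main obstacle.'' You claim that a Folklore-Theorem argument produces an $F$-invariant probability $\tilde\mu$ equivalent to Lebesgue on the induced domain $D=\sqcup_{l\ge 0}E_l$. But the positive-drift hypothesis means exactly the opposite: for $\lambda$-a.e.\ point the state $\hat\chi_n$ tends to infinity, so Lebesgue measure on $D$ is dissipative for $F$ and there is \emph{no} $F$-invariant probability equivalent (or even absolutely continuous) with respect to $\lambda$. Indeed, the existence of a wild attractor is incompatible with an acip for $f$, and any finite $F$-invariant measure $\tilde\mu\ll\lambda$ would project to an acip. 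The only natural $F$-invariant probability in this setting is the lift of the singular measure $\mu$ on $\A$, which tells you nothing about $\lambda_2$-typical pairs. Consequently your lift of weak mixing from $(\A,\mu)$ to an $(F,\tilde\mu)$ with $\tilde\mu\sim\lambda$ cannot be carried out, and the argument stalls before reaching $\lambda_2(\LY)=1$.

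The paper bypasses this entirely. It quotes \cite{BrHaw} for the much stronger fact that $(\bas(\A),f,\lambda)$ is \emph{Lebesgue exact} (a statement about $\lambda$ itself, not about $\mu$), and then Proposition~\ref{PropExactDis} immediately gives $\lambda_2(\Dis)=0$; combined with Theorem~\ref{mainthm:Asymp} this yields $\lambda_2(\LY)=1$. The upgrade to $\LY_\eps$ is obtained via the combinatorial loop construction of Proposition~\ref{prop:scrambled_fibers}. So the key external input you need is Lebesgue exactness from \cite{BrHaw}, not merely weak mixing of $\mu$ from \cite{BKS}; the latter is about the wrong measure. (Incidentally, the leading root of $\lambda^3-\lambda-1$ \emph{is} Pisot---it is the plastic number; what fails is that the full characteristic polynomial $\lambda^5-\lambda^4-1$ has the extra roots $e^{\pm i\pi/3}$ on the unit circle, which is what obstructs the summability in Theorem~\ref{thm:DistalAttractor}.)
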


\begin{proof}
As pointed out before, it was shown in \cite{BTams} that $\A = \omega(c)$ is a
wild attractor, and from \cite{BrHaw} it follows that dynamics on the
basin of the attractor is Lebesgue exact. Thus
Proposition~\ref{PropExactDis} implies that $\lambda_2(\LY) = 1$.
In fact, the construction of Proposition~\ref{prop:scrambled_fibers} can also
be used here to show that there is $\eps > 0$ such that $\lambda_2$-a.e.\ pair belongs to $\LY_\eps$.
\end{proof}

\begin{example}\label{ExamLYDIS}
Let $f$ be a unimodal map with cutting times satisfying
$$
S_0 = 1,\ S_1 = 2,\  S_2 = 3,\  S_3 = 4,\  S_4 = 6,\  S_5 = 8,\
S_6 = 10,\  S_7 = 12
$$
and
$$
S_k = S_{k-1} +  S_{k-5} \text{ for } k \geq 8.
$$
This means that the cutting times $S_k$ are even for $k \geq 3$
and  eventually are twice the numbers occurring in the example of
Proposition~\ref{propLY}. Assume also that the critical order of
$\ell$ is so large that $f$ has a wild attractor $\A$. Then $\A$
decomposes into two disjoint Cantor sets $\A_0$ and $\A_1$ which
are permuted by $f$. Note, however, that $f$ is not
renormalizable, see Proposition~\ref{prop:Qinfty}, and therefore
$f$ is topologically mixing on $[c_2, c_1]$.

Let $B_0$ and $B_1$ be disjoint neighborhoods of $\A_0$ and
$\A_1$; for example we can take $B_0 = [c_2, c_{14}] \cup [c_4, c_6]$
and $B_1 = [c_3, c_{15}] \cup [c_5, c_1]$.
Every point in the basin of $\A$ will eventually be
trapped in $B_0 \cup B_1$. But every pair $(x,y)$ with $x \in B_0$
and $y \in B_1$ such that $\orb(x), \orb(y) \subset B_0 \cup B_1$
is distal. On the other hand $f^2|_{\A_0}$ and $f^2|_{\A_1}$
behave like the example of Proposition~\ref{propLY}, so
$\lambda_2$-a.e.\ every pair $(x,y) \in B_0 \times B_0$ (or $(x,y)
\in B_1 \times B_1$) such that $\orb(x), \orb(y) \subset B_0 \cup
B_1$ is Li-Yorke.
\end{example}

\begin{proof}[\textbf{Proof of Theorem~\ref{mainthm:Classification}}]
From Theorem~\ref{mainthm:Asymp} we know that $\lambda_2(\Prox) = 0$.
Also, if some $x \in \bas(\A)$ is approximately periodic, then by
Proposition~\ref{PropApproxPer}, $\A$ is conjugate to an adding
machine, so all points in $\bas(\A)$ are approximately periodic.
By Proposition~\ref{prop:approximately}, $\bas(\A)$ contains no Li-Yorke
pairs.
Therefore (a)-(d) are the only possibilities, and they all occur:\\
(a) The strange adding machine case as wild attractor, see \cite{Btree}.\\
(b) The Fibonacci-like map with kneading map $Q(k) = \max\{ k-d, 0
\}$, $d = 2,3,4$,
see Theorem~\ref{thm:DistalAttractor} and Corollary~\ref{Cor:d=234}.\\
(c) The Fibonacci-like map with kneading map $Q(k) = \max\{ k-5, 0
\}$, see Proposition~\ref{propLY}.\\
(d) See Example~\ref{ExamLYDIS}.\\
Proposition~\ref{prop:scrambled_fibers} implies the existence of $\eps$-scrambled sets in
the fibers $\tilde \pi^{-1}(\tau)$ for all $\tau \in \S^1$ and factor maps $\tilde \pi$ in case
(b), and for cases (c) and (d), the $\eps$-scrambled set is also immediate.
Li-Yorke sensitivity follows as well.
\end{proof}

\begin{remark} In case (a) and (b) of
Theorem~\ref{mainthm:Classification}, $(\A,\mu_{\A}, f)$ is not weakly
mixing. Instead, there is an $f_2$-invariant set of positive
$\mu_2$-measure that is bounded away from the diagonal of $\A_2$.
We expect that case (c) always corresponds to the weakly mixing
case, cf. Proposition~\ref{propLY}, and in particular,
$\mu_\A \times \mu_\A$-a.e.\ pair $(x,y)$ has a dense orbit in $\A \times \A$,
and hence is Li-Yorke.
\end{remark}

\begin{remark}
In case (a), points $x$ in the basin of $\A$ have a distinct
{\em target} point $t_x \in \A$ such that $\dist(f^n(x), f^n(t_x)) \to 0$,
see Proposition~\ref{prop:approximately}.
 In case (b), such target points $t_x \in \A$ do not exist in general,
cf.\ the proof of Proposition~\ref{prop:scrambled_fibers} and
also \cite[Remark 2]{BrHaw}.
\end{remark}

\section{Appendix} \label{sec:appendix}

In this appendix, we prove the more technical results of the
paper, divided into four parts. First we give a result on the
structure of solenoidal sets of Feigenbaum type, next we prove
Theorem~\ref{thm:lemma11}, then we formulate an improved $C^3$
Koebe distortion lemma, and finally we prove
Theorem~\ref{thm:induced}.

\subsection{Neigborhoods of Solenoidal Sets}

Assume that $S$ is a solenoidal set and $\cyc(K)$ is a solenoidal
cycle of period $r$ containing $S$. Let $S_i=S\cap f^i(K)$, $0\leq
i<r$. Since the critical points in $\cyc(K)$ belong to $S$, the
convex hulls $J_i$ of $S_i$ form a cycle of intervals, that is,
$f(J_i)=J_{i+1}$ for every $i$ (with this we also mean
$f(J_{r-1})=J_0$). We call it the \emph{$r$-minimal solenoidal
cycle covering $S$}. We emphasize that the intervals $J_i$ are
pairwise disjoint (because a solenoid contains no periodic
points). Moreover, if they are ordered in the real line as
$J_{i_1}<J_{i_2}<\cdots<J_{i_r}$, then there is a union of
periodic orbits $P=\{p_i\}_{i=1}^{r-1}$ such that
$J_{i_1}<p_1<J_{i_2}<p_2<\cdots <p_{r-1}<J_{i_r}$ (see \cite{MT}
or \cite{AJS}).

The existence of these periodic orbits of smaller period
interlaced among the intervals $J_i$ allows us to prove
immediately that the union $L_i$ of all periodic intervals of
period $r$ containing $J_i$ is also periodic of period $r$.
Clearly, $f(L_i)\subset L_{i+1}$ and $f(\partial L_i)\subset
\partial L_{i+1}$ for every $i$. However, although the intervals
$L_i$ have pairwise disjoint interiors, they do not form a cycle
of intervals because $f(L_i)=L_{i+1}$ need not hold. In fact
$(L_i)_{i=0}^r$ is the pullback chain of $L_r:=L_0$ along
$J_0,J_1,\ldots,J_r=J_0$. Let $M_i=\Inte L_i$. We call
$M=\bigcup_{i=0}^{r-1} M_i$ the \emph{$r$-shell covering $S$}.
Then $M$ is nice and  $J_i\subset M_i$ for every $i$.

\begin{proposition} \label{prop:feigenbaum}
Let $f\in \CIInf(I)$ and let $S$ be a
 solenoidal set of Feigenbaum type. Let $T$ be
 the $r$-minimal solenoidal cycle covering $S$ for some $r$,
 $T=\bigcup_{i=0}^{r-1} f^i(J)$, and let $M=\bigcup_{i=0}^{r-1} M_i$
 be the $r$-shell covering $S$, with $f^i(J)\subset M_i$ for
 every $0\leq i <r$. Then there is $\xi=\xi(f)>0$
 such that $f^i(J)$ is $\xi$-well centered
 in $M_i$ for every $i$.
\end{proposition}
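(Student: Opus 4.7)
My plan is to exploit the period-doubling structure of Feigenbaum-type solenoids together with the Macroscopic Koebe Principle to obtain uniform bounded geometry across the cascade.

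First I would identify $\partial L_i$: by maximality of $L_i$ as a periodic interval of period $r$ containing $J_i$, we have $f^r(\partial L_i)\subset\partial L_i$, so $\partial L_i$ consists of periodic points of $f$ of period dividing $r$. Since $S$ is a Cantor set that contains no periodic points while the endpoints of $J_i$ lie in $S$, this already gives $J_i\subsetneq L_i$.

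The heart of the argument is to establish well-centering at a distinguished index $i_0$ chosen so that some critical point $c\in\Crit\cap S$ lies in $J_{i_0}$. The Feigenbaum condition places $J_{i_0}$ and its companion $f^{r/2}(J_{i_0})$ as the two intervals permuted by $f^{r/2}$ inside a coarser periodic interval of period $r/2$, separated by a fixed point $p$ of $f^{r/2}$ that necessarily lies on $\partial L_{i_0}$; the opposite endpoint of $L_{i_0}$ is a fixed point of $f^r$ inherited from an earlier Feigenbaum level. I would show, by induction on the level of the cascade, that both components of $L_{i_0}\setminus J_{i_0}$ have length comparable to $|J_{i_0}|$ up to a constant $\xi_0=\xi_0(f)>0$. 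The inductive step uses the Macroscopic Koebe Principle (Proposition~\ref{macroscopickoebe}) applied to the first-return dynamics of $f^{r/2}$ on the shell at the coarser level: this transfers well-centering from level $r/2$ to level $r$ with bounded loss, quantified by the non-flat model $f(x)=\pm|\varphi_c(x)|^{\ell_c}+f(c)$ around $c$. The base of the induction is handled by compactness, since $f$ has only finitely many critical points and hence only finitely many Feigenbaum cascades to consider.

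For an arbitrary index $i$, I transfer the estimate at $i_0$ via the pullback chain $(L_{(i+j)\bmod r})_{j=0}^{k_i}$ with $k_i=(i_0-i)\bmod r$, which connects $L_i$ to $L_{i_0}$. Since only shells $L_j$ meeting $\Crit$ count toward the order of the chain and $\#\Crit<\infty$, this chain has order at most $\#\Crit$. Applying Corollary~\ref{cor:disjointness} together with Lemma~\ref{lem:regularity} in its well-centered form then converts the $\xi_0$-well centering of $J_{i_0}$ in $L_{i_0}$ into $\xi$-well centering of $J_i$ in $L_i$, with $\xi=\xi(\xi_0,\#\Crit,f)>0$ depending only on $f$.

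The main obstacle is the inductive step at the critical index — establishing bounded Feigenbaum geometry uniformly in the cascade level in the $C^2$ multimodal setting. The non-flat hypothesis is essential here: without quantitative control of the fold at $c$, successive applications of the Macroscopic Koebe Principle could accumulate loss and the ratios $|J^{(n)}|/|J^{(n-1)}|$ might degenerate to $0$ or $1$ as $n\to\infty$.
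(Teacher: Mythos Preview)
Your overall strategy --- establish well-centering first at an index containing a critical point, then propagate to all other indices via pullback --- matches the paper's. However, both steps have genuine gaps as you have written them.

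\textbf{At the critical index.} Your inductive scheme does not close. Proposition~\ref{macroscopickoebe} gives, for each $\xi>0$, a $\xi'=\xi'(\xi,f)>0$; iterating $\xi\mapsto\xi'$ along the cascade may drive the constant to $0$. Saying the loss is ``quantified by the non-flat model'' is not a mechanism preventing accumulation --- that is precisely the content of the real-bounds theorem, which is nontrivial. The paper does not attempt to reprove real bounds; it invokes \cite[Theorem~A'(1)]{vSV} directly to obtain a single $\xi_0=\xi_0(f)$ and a nested sequence of nice periodic intervals $N^m\ni c$ of periods $s2^m$ with $N^{m+1}$ $\xi_0$-well inside $N^m$. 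It then identifies $M_0=N^m$ and the next-level nice periodic interval $(\tau_c(p),p)=N^{m+1}$, pulls $(\tau_c(p),p)$ back through $f^r$ using Corollary~\ref{cor:disjointness}, and does a short length computation to upgrade the two well-inside statements to $J$ being well \emph{centered} in $M_0$. You should either cite the real-bounds input or supply an argument of comparable strength; the Macroscopic Koebe Principle alone is not enough.

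\textbf{Propagation to the other indices.} Corollary~\ref{cor:disjointness} gives only well-insidedness, not well-centeredness; the paper says so explicitly. To transport well-centering you need bounded distortion on the diffeomorphic stretches between consecutive critical visits, and for that you need Koebe space \emph{outside} $M_i$. The paper builds this outer space by citing \cite[Lemmas~2 and 3]{vSV}: $M_0$ is well inside an interval $G_r$ meeting at most a bounded number of the $M_i$, and the pullback chain $(G_i)$ has order bounded by a constant depending only on $\#\Crit$. Then Corollary~\ref{cor:disjointness} gives bounded distortion of $f^l|_{M_i}$ on each diffeomorphic stretch $M_i\to M_{i+l}$, which preserves well-centering, while Lemma~\ref{lem:regularity} handles the single critical step. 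Your proposal applies Corollary~\ref{cor:disjointness} directly to the chain $(L_i)$ without any outer space, so it cannot yield well-centering.
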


\begin{proof}
Since $S$ is a solenoid, there is a turning point $c\in S$.
 There
 is no loss of generality in assuming that $c\in J$.
 Also, we can assume that $r$ is large enough so that each
 interval $M_i$ contains
at most one critical point of $f$ and there
 are no critical points in $\Cl M$ outside $T$. Hence $(M_i)_{i=0}^r$ is
 the pullback chain of $M_r:=M_0$ along $J,f(J),\ldots,f^r(J)=J$.
 Let $\cyc(R)$ be the $2r$-minimal
 solenoidal cycle covering $S$, with $c\in R$. We can for example
 assume that $[u,v]=R$, $f^r(R)=[w,z]$ and  $c \in R$ is a local maximum,
Note that $J$ is the convex hull of $R \cup f^r(R)$.
 Observe that there is a periodic point $p$ of period $r$ such
 that $f^i(p)$ lies between $f^i(R)$ and
 $f^{i+r}(R)$ for every $i$. Since $T$ is a solenoidal cycle,
 $\orb(p)$ is hyperbolic repelling. In fact, since $f$ is monotone
 on each on the intervals connecting $f^i(R)$ and
 $f^{i+r}(R)$, it is the only
 $r$-periodic orbit in $T$ and $f^r([u,p))=(p,z]$,
 $f^r((p,z])=[u,p)$. Then $(\tau_c(p),p)$ is clearly the only nice
 periodic interval of period $2r$ containing $c$.

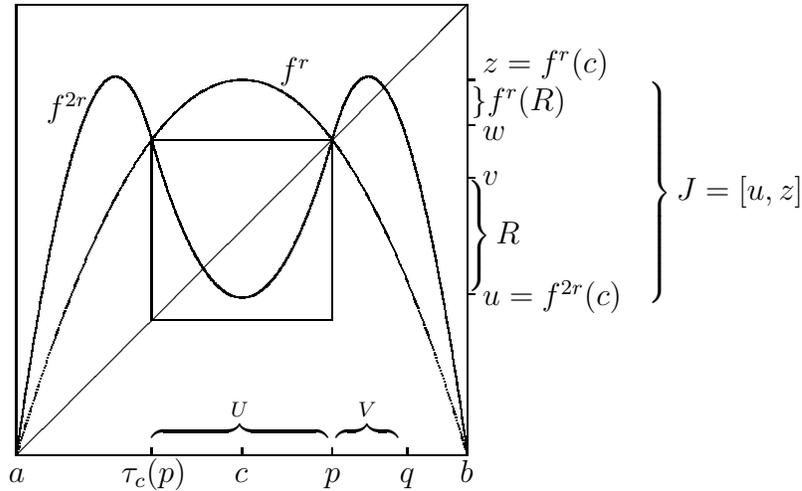
\begin{figure}[htb]
\begin{center}
\setlength{\unitlength}{1.0cm}
\begin{picture}(7,7)(1,-0.4)
\thinlines \put(0,0){\line(1,0){6}} \put(0,0){\line(0,1){6}}
\put(0,6){\line(1,0){6}} \put(6,0){\line(0,1){6}}
\put(0,0){\line(1,1){6}} \put(0,0){\line(0,1){6}}

\put(4.2,4.2){\line(0,-1){2.4}} \put(4.2,4.2){\line(-1,0){2.4}}
\put(4.2,1.8){\line(-1,0){2.4}} \put(1.8,4.2){\line(0,-1){2.4}}

\bezier{500}(0,0)(3,10)(6,0) \bezier{500}(0,0)(1,7.1)(1.8,4.2)
\bezier{500}(1.8,4.2)(3,0)(4.2,4.2)
\bezier{500}(4.2,4.2)(5,7.1)(6,0) \put(3.5,5){$f^r$}
\put(0.4,4.5){$f^{2r}$}
\put(2.9,-0.35){$c$}\put(3,0){\line(0,1){0.1}}
\put(4.1,-0.35){$p$}\put(4.2,0){\line(0,1){0.1}}
\put(5.1,-0.35){$q$}\put(5.2,0){\line(0,1){0.1}}
 \put(4.25,0.2){$\overbrace{\qquad}^{V}$}
 \put(1.8,0.2){$\overbrace{\qquad\!  \! \qquad\qquad}^{U}$}
\put(1.4,-0.35){$\tau_c(p)$}\put(1.8,0){\line(0,1){0.1}}
\put(-0.1,-0.35){$a$}\put(0,0){\line(0,1){0.1}}
\put(5.9,-0.35){$b$}\put(6,0){\line(0,1){0.1}} \put(6.2,5.1){$z =
f^r(c)$}\put(6,5){\line(1,0){0.1}} \put(6.2,2){$u =
f^{2r}(c)$}\put(6,2.15){\line(1,0){0.1}}
\put(6.2,3.6){$v$}\put(6,3.7){\line(1,0){0.1}}
\put(6.2,4.2){$w$}\put(6,4.4){\line(1,0){0.1}}
\put(5.6,2.83){$\left. \begin{array}{l} \\ \\ \\ \end{array}
\right\} R$} \put(6.05,4.6){$\} f^r(R)$} \put(8,3.4){$\left.
\begin{array}{l} \\ \\ \\ \\ \\ \\ \end{array} \right\} J =
[u,z]$}
\end{picture}
\caption{The graph of $f^r:M_0 = (a,b) \to M_0$ with $J$ and $R$
and relevant points} \label{fig:feig}
\end{center}
\end{figure}

 By \cite[Theorem~A'(1)]{vSV} there are an integer $s>0$, a number
 $\xi_0=\xi_0(f)>0$ and nice periodic intervals $N^m\ni c$ of periods  $s2^m$,
 $m\geq 0$, such that $N^{m+1}$ is $\xi_0$-well inside $N^m$ for
 every $m$. We have shown in the previous paragraph
 that if $s2^m$ is large enough, then there is just one
 nice periodic interval of period $s2^m$ containing $c$.
 Thus we can assume that $M_0=N^m$ and $(\tau_c(p),p)=N^{m+1}$
 for some $m$, with $r=s2^m$.

 Let $q>p$ be the closest point to $p$ satisfying
 $f^r(q)=\tau_c(p)$. Then $V=(p,q)$ is just the pullback of $U=(\tau_c(p),p)$ along
 $f^r(R),f^{r+1}(R),\ldots,f^{2r}(R)=R$. Since $U$ is $\xi_0$-well inside $M_0$,
 there is $\xi_1=\xi_1(f)$ such that $V$ is $\xi_1$-well inside $M_0$,  due to
 Corollary~\ref{cor:disjointness}.

 Let $W=(q,b]$ with $b$ the right
 endpoint of $M_0$. Then $\xi_0|U|\leq |V|+|W|$ and $\xi_1|V|\leq
 |W|$, hence
 $$
  \frac{\xi_0\xi_1}{1+\xi_0+\xi_1} (|U|+|V|)\leq |W|.
 $$
 Then $(\tau_c(q),q)$ is
 well centered in $M_0$ and so is any subinterval of
 $(\tau_c(q),q)$. In particular, $J$ is well
 centered in $M_0$.

 To finish the proof we must show that \emph{every}
 interval $f^i(J)$ is well centered
 in the corresponding interval $M_i$. Note that we cannot
 directly use Corollary~\ref{cor:disjointness}
 because it only guarantees that
 $f^i(J)$ is well inside $M_i$. Instead we proceed as
 follows. According to \cite[Lemma~2]{vSV}, $M_r:=M_0$ is well
 inside an interval $G_r$ which contains at most $e=2^{d+1}+3$ of the
 intervals $f^i(M_0)$, $0\leq i<r$, with $d = \# \Crit$. Therefore it also contains at most $e$ of the intervals
 $M_i$. Now \cite[Lemma~3]{vSV} implies that the pullback chain $(G_i)_{i=0}^r$ of
 $G_r$ along $M_0,\ldots,f^r(M_0)$ has order bounded by
 $2(e+d(e+2))+1$. Hence, by Corollary~\ref{cor:disjointness}, $M_i$ is well inside
 $G_i$ for every $i$
 and, additionally, if some iterate $f^l$ maps
 diffeomorphically $M_i$ onto $M_{i+l}$, then this
 diffeomorphism has bounded distortion. Thus, if $f^{i+l}(J)$ is
 well centered in $M_{i+l}$, $f^i(J)$ is well centered in $M_i$.
 Using now Lemma~\ref{lem:regularity} and recalling
 that $J=f^r(J)$ is well centered in $M_0=M_r$,
 we conclude that every interval $f^i(J)$ is well centered in
 $M_i$ as we desired to show.
\end{proof}

\subsection{Proof of Theorem~\ref{thm:lemma11}}

Later on we will apply the lemma below to the set $Q=\Crit$ of
critical points of our map $f\in \CIInf(I)$; recall that we are
assuming that $f$ has no periodic critical points, see
(\ref{nonperiodic}).

\begin{lemma} \label{lem:small}
 Let $f:I\to I$ be a multimodal map without wandering intervals
 and let $Q\subset I\setminus \partial I$ be a finite set
 containing no periodic points. Let $x \in Q$.
 Then there is an arbitrarily small nice interval $J \ni x$
 such that $\orb(Q)\cap \partial J=\emptyset$ and $\dist(\orb(\partial
 J),Q)>0$.
\end{lemma}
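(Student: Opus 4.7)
The plan is to take $J = (a,b)$ with endpoints $a, b$ chosen as preimages of a carefully selected periodic orbit $P$, arranged so that $P \cap \fullorb(Q) = \emptyset$, where $\fullorb(Q) := \bigcup_{k \geq 0} f^k(Q)$ denotes the full forward orbit of $Q$. Such a $P$ exists for two reasons. First, for each $q \in Q$, $\orb(q)$ meets at most one complete periodic orbit: once an iterate of $q$ lands on a periodic point, all subsequent iterates are trapped on that single cycle, so $\fullorb(Q)$ contains at most $|Q|$ complete periodic orbits. Second, a multimodal map without wandering intervals has infinitely many periodic orbits accumulating near the non-periodic point $x$ (in the Sharkovskii non-$2^\infty$ case via the richness of forced periods, and in the $2^\infty$ case via the nested family of period-doubling cycles). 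Discarding the at most $|Q|$ bad orbits still leaves infinitely many candidates for $P$, and one can be chosen so that $\bigcup_{m \geq 0} f^{-m}(P)$ accumulates at $x$ from both sides.

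The key consequence of this choice of $P$ is that any preimage $y \in f^{-m}(P)$ satisfies $\orb(y) \cap Q = \emptyset$ and $y \notin \fullorb(Q)$. Indeed, suppose $f^j(y) \in Q$ for some $j \geq 0$. If $j \geq m$, then $f^j(y) = f^{j-m}(f^m(y)) \in P$ by invariance of $P$, contradicting $P \cap Q = \emptyset$. If $j < m$, then $f^m(y) = f^{m-j}(f^j(y)) \in f^{m-j}(Q) \subset \fullorb(Q)$, contradicting $P \cap \fullorb(Q) = \emptyset$. The same argument with $j = 0$ in place of $j$ gives $y \notin \fullorb(Q)$. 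Thus if we pick $a < x < b$ with $a, b \in \bigcup_{m \geq 0} f^{-m}(P)$, we automatically obtain $\orb(Q) \cap \partial J = \emptyset$ and $\orb(\partial J) \cap Q = \emptyset$, and since $\orb(\partial J) = \orb(a) \cup \orb(b)$ is a finite set disjoint from the finite set $Q$, also $\dist(\orb(\partial J), Q) > 0$.

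To make $J$ nice and arbitrarily small, observe that $\orb(\partial J)$ is a finite set disjoint from $\{x\} \subset Q$, so it is bounded away from $x$ by some $\delta > 0$. Using the accumulation of preimages of $P$ at $x$, I would pick $a$ and $b$ with $|a-x|$, $|b-x|$ less than both $\delta$ and any prescribed smallness bound; a short bootstrap handles the mutual dependence of $\delta$ on the choice of $a, b$ (first fix $a$ close enough to $x$ so that $\orb(a) \setminus \{a\}$ is far from $x$, then pick $b$ closer to $x$ so that neither $\orb(a) \setminus \{a\}$ nor $\orb(b) \setminus \{b\}$ enters $(a,b)$). Then $(a, b) \cap \orb(\partial J) = \emptyset$, so $J$ is nice. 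The main obstacle is the density statement used at the end of Step~1: in the topologically-mixing-cycle case it is classical, but in the solenoidal or wild-Cantor-attractor case preimages of an arbitrary periodic orbit need not accumulate at $x$, and one should instead take $J$ to be the interior of a deep nested periodic interval $K_\ell \ni x$, using $P = \orb(\partial K_\ell)$ with $\ell$ chosen large enough that $\orb(\partial K_\ell)$ avoids $\fullorb(Q)$.
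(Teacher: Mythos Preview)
Your approach has a genuine gap at the density step. You claim that a multimodal map without wandering intervals has infinitely many periodic orbits accumulating at the non-periodic point $x$, and that for a suitable periodic orbit $P$ the preimage set $\bigcup_{m\geq 0} f^{-m}(P)$ accumulates at $x$ from both sides. Neither holds in general. Take $f(x) = 2x(1-x)$ on $[0,1]$: the only periodic points are the fixed points $0$ and $1/2$, and the full preimage set $\bigcup_{m\geq 0} f^{-m}(\{0, 1/2\})$ equals $\{0, 1/2, 1\}$, yet $f$ has no wandering intervals since every point is asymptotically periodic. Your Sharkovskii dichotomy (``non-$2^\infty$'' versus ``$2^\infty$'') overlooks the elementary case where the set of periods is a finite segment $\{1, 2, \ldots, 2^k\}$ and there are only finitely many periodic orbits; your solenoidal patch with nested periodic intervals $K_\ell$ does not cover this case either. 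So for $Q = \{1/4\}$, say, your construction cannot even begin: there is no candidate $P$ whose preimages approach $x$.

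The paper's proof circumvents this by working with \emph{asymptotically} periodic points rather than eventually periodic ones. It shows that $\AsPer \setminus Q'$ is dense in $I$ (where $Q' = \bigcup_{n \in \Z} f^n(Q)$): given any interval $K$, either the forward images $f^n(K)$ are pairwise disjoint and hence, by the no-wandering-interval hypothesis, attracted to a periodic orbit, or they eventually overlap and one obtains an $f^k$-invariant interval on which either all points are asymptotically periodic or there are infinitely many periodic orbits, all but finitely many disjoint from $\orb(Q)$. Having picked $\hat a < x < \hat b$ in $\AsPer \setminus Q'$ close to $x$, the paper takes $a, b$ to be the points of $\Cl(\orb(\hat a) \cup \orb(\hat b))$ closest to $x$ on each side; this yields niceness in one stroke, avoiding your bootstrap (which is itself shaky: there is no reason $\orb(a)\setminus\{a\}$ should be far from $x$ merely because $a$ is close to $x$). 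Finally, since $a$ and $b$ are asymptotically periodic and $Q$ contains no periodic points, $\Cl(\orb(a)\cup\orb(b))$ is a compact set disjoint from $Q$, giving $\dist(\orb(\partial J), Q) > 0$ even though $\orb(\partial J)$ may be infinite.
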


\begin{proof}
 Let $Q'=\bigcup_{n=-\infty}^\infty f^n(Q)$.
 We claim that the set $P=\AsPer\setminus Q'$ is dense
 in $I$. This implies the lemma. Indeed, let $\eps>0$ be
 small. Since $x$ is not periodic, there is no loss of generality in assuming that
 $\orb(Q)\cap (x-\eps,x+\eps)$ does not contain any periodic point. Take points
 $\hat a\in (x-\eps,x)\cap P$, $\hat b\in (x,x+\eps)\cap P$, and let $a<x<b$ be the points from
 $\Cl(\orb(\hat a)\cup \orb(\hat b))$ closest to $x$ from both
 sides. We emphasize that both $\hat a$ and $\hat b$ are asymptotically periodic
 and $x$ is not periodic, so $a$ and $b$ are well defined. Moreover, $(a,b)$ is nice.
 Also, $a\notin Q'$. If $a\in \orb(\hat a)\cup
 \orb(\hat b)$ this is obvious because $\hat a, \hat b\notin Q'$.
 If $a\notin\orb(\hat a)\cup \orb(\hat
 b)$, then $a$  must belong to a periodic orbit attracting either $\orb(\hat a)$ or  $\orb(\hat
 b)$, and again $a\notin Q'$ because $Q$ contains no periodic points and
 neither does $\orb(Q)\cap (x-\eps,x+\eps)$. Similarly, $b\notin
 Q'$. We have shown $\orb(Q)\cap \{a,b\}=\emptyset$ and $(\orb(a)\cup\orb(b))\cap
 Q=\emptyset$. Since $a$ and $b$ are asymptotically periodic and $Q$ contains
 no periodic points, the property $(\orb(a)\cup\orb(b))\cap
 Q=\emptyset$ implies in fact $\dist(\orb(a)\cup\orb(b),Q)>0$.
 Thus $J=(a,b)$ is the small nice interval we are looking for.

 We prove that every interval $K$ intersects $P$. If the
 intervals $\{f^n(K)\}_n$ are pairwise disjoint, then
 the absence of wandering intervals for $f$ implies
 that these intervals are attracted by a periodic orbit.
 Since $f$ is multimodal, the set of points in $K \cap Q'$
 is countable. Thus $K$ intersects $P$.

 Now assume that $f^n(K)\cap f^m(K)\neq \emptyset$ for some $n<m$.
 Let $k=m-n$. Using again that $f$ is multimodal, we get that
 $T=\Cl(\bigcup_{r=0}^\infty f^{n+rk}(K))$ is a nondegenerate interval.
 Moreover, it is  invariant for $f^k$. If $f^k|_T$ has finitely many
 periodic points, then all points from $T$ are
 asymptotically (or eventually) periodic, see \eg \cite[p. 127]{BlCo},
 so all points from $K$ are asymptotically periodic as well and
 $K\cap P\neq \emptyset$ as before. If $f^k|_T$ has infinitely
 many periodic points, there is a family of disjoint periodic
 orbits $\{\mathcal{O}_j\}_{j=1}^\infty$ such that
 $\orb(K)\cap \mathcal{O}_j \neq \emptyset$ for every $j$. If
 $j$ is large enough, then $\orb(Q)\cap \mathcal{O}_j=\emptyset$.
 Let $y\in K$ be a preimage of such $\mathcal{O}_j$. Then $y\notin
 \fullorb(Q)$, which finishes the proof.
\end{proof}

The next proposition strengthens
\cite[Proposition~5]{caili}. In what follows, we say that a nice
set $V$ is \emph{$\xi$-nice} if all return domains to $V$ are
$\xi$-well inside the components of $V$ containing them. Sometimes
we say that $V$ is \emph{uniformly nice} if it is $\xi$-nice for
some constant $\xi$ depending only on $f$. We denote
$$
 Z=\{x\in I: x\notin \orb(\Crit),\,\dist(\orb(x),\Crit)>0\}.
$$
Observe that $f^{-1}(Z) = Z$ and that
(reasoning as in the proof of Lemma~\ref{lem:small}) $Z$ is dense
for $f \in \CIInf(I)$.
We denote by $\Feig$ the critical points of Feigenbaum type.

\begin{proposition} \label{prop:caili}
 Let $f\in \CIInf(I)$.
 Then there are $\xi_0=\xi_0(f)>0$ and, for every $\eps>0$ and $c \in \Crit\setminus \Feig$, open
 intervals $c\in V_c$ with $|V_c|<\eps$,
 such that $V=\bigcup_{c\in \Crit\setminus \Feig} V_c$ is
 $\xi_0$-nice and $\partial V\subset Z$.
\end{proposition}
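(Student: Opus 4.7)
The plan is to adapt \cite[Proposition~5]{caili} from $C^3$ to $C^2$ using the $C^2$ Koebe principle of Proposition~\ref{prop:koebe}, and then perform a boundary perturbation based on Lemma~\ref{lem:small} to guarantee $\partial V\subset Z$.

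First, I would construct an intermediate uniformly nice multi-component neighborhood. For each $c\in \Crit\setminus \Feig$, Proposition~\ref{prop:well-inside} (applicable because critical points are non-periodic by (\ref{nonperiodic}), $c$ is non-Feigenbaum, and the non-recurrent subcase of a critical point attracted to a periodic attractor is handled separately with any sufficiently small nice neighborhood of $c$) supplies an arbitrarily small nice interval $J_c\ni c$ with $|J_c|<\eps/2$ whose return domain containing $c$ is $\xi_0$-well inside $J_c$, for some $\xi_0=\xi_0(f)>0$. Applying Proposition~\ref{macroscopickoebe} then propagates this to all return domains of $J_c$, with uniform constant $\xi_1=\xi_1(\xi_0,f)$.

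Next, to arrange $\partial V\subset Z$, I would apply Lemma~\ref{lem:small} with $Q=\Crit$ to pick, inside each $J_c$, a nice sub-interval $V_c\ni c$ whose endpoints avoid $\orb(\Crit)$ and have forward orbits bounded away from $\Crit$, i.e., $\partial V_c\subset Z$. Since Lemma~\ref{lem:small} allows these endpoints to be chosen arbitrarily close to those of $J_c$, $V_c$ can be taken to be, say, $1$-well inside $J_c$, and in particular $|V_c|<\eps$.

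Finally, I would verify that $V=\bigcup_c V_c$ is $\xi_0$-nice for a uniform constant. A return domain $W\subset V_c$ of $V$ with first-return time $r$ and $f^r(W)\subset V_{c'}$ either also constitutes a return domain of $V_c$ to itself (in particular when $c=c'$ with no earlier return to $V$), in which case Step~1 and the well-insidedness of $V_c$ in $J_c$ yield the claim; or else $c'\neq c$ and $W$ is the pullback of $V_{c'}$ along the orbit $x,\dots,f^r(x)$ for any $x\in W$. In the cross-return case, I would pull back a slightly enlarged $V_{c'}^+\subset J_{c'}$ (with $V_{c'}$ well inside $V_{c'}^+$) along the same orbit to obtain an interval $W^+\supset W$ in which $W$ is well inside by Proposition~\ref{prop:koebe}, and arrange (by choosing the enlargement small) that $W^+$ stays within the $J_c$-return domain of $x$; combined with the well-insidedness of $V_c$ inside $J_c$, this yields uniform well-insidedness of $W$ in $V_c$. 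This cross-return case is the main obstacle, since Proposition~\ref{macroscopickoebe} is natively stated inside a single nice ambient interval rather than across components of a multi-component nice set; using $J_c$ and the controlled enlargement $V_{c'}^+$ as an auxiliary framework is what allows the $C^2$ Koebe principle to bridge the gap.
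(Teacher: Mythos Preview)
Your Step~3 for the cross-return case does not go through, and this is the heart of the matter. Even granting that $f^r|_W$ is a diffeomorphism, there is no uniform choice of enlargement $V_{c'}^+\supset V_{c'}$ for which the pullback $W^+$ along $x,\dots,f^r(x)$ is still diffeomorphic: for a fixed $V_{c'}^+$, some return domain $W$ will have an intermediate image $f^i(W^+)$ meeting a critical point, and shrinking $V_{c'}^+$ adaptively makes the Koebe space depend on $W$, destroying uniformity. More seriously, the assertion that ``$W^+$ stays within the $J_c$-return domain of $x$'' has no justification: the first return to $V$ lands in $V_{c'}$ with $c'\neq c$, whereas the $J_c$-return domain of $x$ is governed by the unrelated first return to $J_c$, and there is no containment relation between the two. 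Even if one had $W^+\subset V_c$, the conclusion ``$W$ well inside $V_c$'' would still require a lower bound on $|W^+|/|V_c|$, which you do not provide. Finally, niceness of the union $V=\bigcup_c V_c$ (i.e.\ $\orb(\partial V)\cap V=\emptyset$) is never verified; having $\partial V_c\subset Z$ for each $c$ individually is not enough without a coordinated choice of sizes, and Lemma~\ref{lem:small} gives arbitrarily small nice intervals rather than intervals with endpoints near those of $J_c$.

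The paper's proof avoids all of this by an inductive construction, adding one critical point at a time. Having a $\xi_k$-nice set $\bigcup_{i\le k}V_i$ with boundaries in $Z$, it first shrinks each $V_i$ to $\tilde V_i$ with the crucial property $\partial\tilde V_i\cap D(\bigcup_j V_j)=\emptyset$ (Claim~2, essentially \cite[Lemma~3]{caili} with the $Z$-condition built in), and then constructs $\tilde V_{k+1}$ compatibly, distinguishing whether $c_{k+1}$ is solenoidal, recurrent non-solenoidal, or non-recurrent. The final check that $\tilde V=\bigcup_{i\le k+1}\tilde V_i$ is uniformly nice splits a return orbit at the first entry to the larger $V_l$, applies Proposition~\ref{macroscopickoebe} to the tail, and then Corollary~\ref{cor:disjointness} (bounded-order pullback with disjoint intermediate intervals, not the zero-order Koebe principle) to the head; the condition $\partial\tilde V_n\notin D(\bigcup_j V_j)$ is precisely what forces the entry domain to $V_l$ to sit inside $\tilde V_n$, so that ``well inside the entry domain'' becomes ``well inside $\tilde V_n$''. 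Your direct assembly of independently chosen $V_c$'s has no substitute for this property.
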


\begin{proof}
 In Cai and Li's version of this proposition, all critical points
 of solenoidal type (not only those of Feigenbaum type) are excluded and no additional
 properties on $\partial V$ are obtained.
 Nevertheless, the proof remains very much the same. We sketch it
 below, emphasizing the specific points where it must be modified.

 First of all, we prove:

 \textbf{Claim 1.} There is $\xi_1=\xi_1(f)$ such that if $c\in
 \Crit\setminus \Feig$, then there are arbitrarily small
 $\xi_1$-nice intervals $J$ containing $c$ such that $\partial
 J\subset Z$.

 This is \cite[Corollary~3]{caili}, but including also solenoidal
 critical points of non-Feigenbaum type. If $c$ is recurrent, then Claim~1 follows
 immediately from Propositions~\ref{macroscopickoebe} and
 \ref{prop:well-inside}. In fact, if $c$ is not solenoidal, and we
 take as the starting interval $I_0$ in
 Proposition~\ref{prop:well-inside} a small nice interval containing
$c$ with $\partial I_0\subset Z$ (which is possible by
 Lemma~\ref{lem:small}), then the central
return domain $I_m$ to $I_{m-1}$ satisfies $\partial I_m\subset Z$ as well
 for all $m$. Thus if $m$ is large enough
 and $I_{m+1}$ is well inside $I_m$, then,
 by Proposition~\ref{macroscopickoebe}, $I_{m+1}$ is the
 uniformly nice interval we need. If $c$ is solenoidal, then
 Propositions~\ref{macroscopickoebe} and
 \ref{prop:well-inside} again imply that there is an arbitrarily small
 uniformly nice interval $J$ containing $c$. Observe that
 if $J$ is sufficiently small, then it is contained in a solenoidal
 cycle very close to $\omega(c)$, which in particular implies that
 the orbits of its endpoints cannot accumulate on any critical
 point outside $\omega(c)$. Also, since $J$ is nice and $\omega(c)$ is minimal,
 they cannot accumulate on $\omega(c)$ either. Thus $\partial
 J\subset Z$.

If $c$ is not recurrent then
the argument from \cite[Corollary~3]{caili} applies without
 any significant changes. Namely, let $I'\owns c$ be a small nice interval
 with $\partial I'\subset Z$. We can assume that $c\notin D(I')$.
 Let $\delta$ be the minimal length
 of the components of $I'\setminus \{c\}$ and take an interval
 $(a',b')\subset (c-\delta/2,c+\delta/2)$ with $a',b'\in Z$.
This is possible by Lemma~\ref{lem:small}. If $a'\notin
 D(I')$, define $a=a'$. Otherwise, let $K$ be the return domain to
 $I'$ containing $a'$ and let $a$ be the endpoint of $K'$ in
 $(a',c)$. The point $b$ is defined similarly. Then $J=(a,b)$ is
 $1/2$-well inside $I'$, $\partial J\subset Z$ and $\partial J\cap
 D(I')=\emptyset$. If $x\in J\cap D(J)$, then the return domain $L$ to
 $J$ containing $x$ is well inside the return domain $L'$
to $I'$
 containing $x$ by  Proposition~\ref{macroscopickoebe}.
 Since $\partial J\cap D(I')=\emptyset$, we obtain
 $L' \subset J$. Then $L$ is well inside $J$ and $J$ is uniformly
 nice.

 \textbf{Claim 2.} Let $c_1,c_2,\ldots,c_k\in \Crit$ and $V_i\owns
 c_i$ be nice intervals such that $V=\bigcup_{i=1}^k V_i$ is a
 $\xi$-nice set and $\partial V\subset Z$.
 Then there is $\xi'=\xi'(\xi)>0$  such that the following hold.
 \begin{itemize}
  \item[(1)] For each $1\leq i\leq k$, there exist nice intervals
  $W_i\supset \tilde V_i\owns c_i$ such that
   \begin{itemize}
   \item $\tilde V_i$ is $\xi'$-well inside $W_i$ and $W_i$ is
   $\xi'$-well inside $V_i$;
   \item $\partial \tilde V_i\cap D(V)=\emptyset$ and $\partial W_i\cap
   D(V)=\emptyset$. In particular, $\bigcup_{i=1}^k \tilde V_i$ is a
   nice set;
   \end{itemize}
  \item[(2)] For each $x\in V_i\setminus \Cl \tilde V_i$, there is a
  nice interval  $J_x\owns x$ such that $J_x$ is
  $\xi'$-well inside $V_i$ and $J_x\cap \tilde V_i=\emptyset$,
  $\partial J_x\cap D(V)=\emptyset$.
  \item[(3)] The endpoints of all intervals above belong to $Z$.
 \end{itemize}

 Claim 2 is exactly \cite[Lemma~3]{caili}, except that we additionally
 request $\partial V\subset Z$ and get the extra property (3) in
 return. The proof requires no changes: only, instead of defining
 the auxiliary interval $(p',q')=
 (a-\xi'(b-a)/4, b+\xi'(b-a)/4)$ for $\tilde V_i=(a,b)$,
 we choose $p',q'\in Z$ with $p'\in (a-\xi'(b-a)/3, a-\xi'(b-a)/4)$ and
 $q'\in (b+\xi'(b-a)/4,b+\xi'(b-a)/3)$.

 We are now in position to prove Proposition~\ref{prop:caili}.
 This is done inductively. Let $\Crit\setminus \Feig=\{c_1,\ldots, c_m\}$.
 If $m=1$, then this is just Claim 1. Assume that we have
 constructed intervals $V_i\owns c_i$ with $|V_i|<\eps$ and
 $\partial V_i\subset Z$,
 $1\leq i\leq k$, such that $\bigcup_{i=1}^k V_i$ is $\xi_k$-nice for
 some constant $\xi_k>0$. We will show that there are smaller
 intervals $\tilde V_i\owns c_i$ with $\partial \tilde V_i\subset Z$,
 and a constant $\xi_{k+1}$
 depending only on $\xi_k$, such that $\bigcup_{i=1}^{k+1} \tilde V_i$ is
 $\xi_{k+1}$-nice.

 The intervals $\tilde V_i$, $1\leq i\leq k$, are those
 from Claim 2. To define $\tilde V_{k+1}$ and conclude the proof,
 two cases must be considered. If $c_{k+1}\in D(\bigcup_{i=1}^k V_i)$,
 then Cai and Li's proof works
 without any changes (it uses Claim 2 in its full extension).

 If $c_{k+1}\notin D(\bigcup_{i=1}^k V_i)$, then we need to find
 intervals $c_{k+1}\subset \tilde V_{k+1}\subset V_{k+1}$ with
 $|V_{k+1}|<\eps$, $\tilde V_{k+1}$ well inside $V_{k+1}$ and
 $\partial \tilde V_{k+1}\subset Z$, such that $\bigcup_{i=1}^{k+1} V_i$ is
 nice and $\partial \tilde V_i\notin D(\bigcup_{i=1}^{k+1} V_i)$ for
 every $1\leq i\leq k+1$.

 We define $V_{k+1}$ and $\tilde V_{k+1}$.
 Since $\tilde V_{k+1}$ is well inside
 $V_{k+1}$ and $\partial \tilde V_{k+1}\subset Z$, we only need
 to show that $\partial V_{k+1},\partial \tilde V_{k+1}
 \notin D(\bigcup_{i=1}^{k+1} V_i)$. (Recall that the endpoints of the intervals $V_i,\tilde
 V_i$, $i\leq k$, belong to $Z$, hence their orbits cannot visit
 $V_{k+1}$ if it is sufficiently small.)

 As in the proof of Claim 1, three possibilities
 arise for $c_{k+1}$. The simplest case is when $c_{k+1}$ is
 solenoidal. Then $V_{k+1}$ is defined as in Claim 1 and $\tilde
 V_{k+1}$ is the return domain to $V_{k+1}$ containing $c_{k+1}$;
 everything works because $c_{k+1}\notin D(\bigcup_{i=1}^k V_i)$.

 Now assume that $c_{k+1}$ is not solenoidal. Starting from a
 small interval $(a',b')\owns c_{k+1}$ with $a',b'\in Z$
 and repeating the reasoning in Case~2 of Cai and Li's proof, we
 find an interval $c_{k+1}\in (a,b)\subset (a',b')$ such that
 $a,b\in Z$ and
 $(a,b)\cup \bigcup_{i=1}^k V_i$ is nice. If $c_{k+1}$ is recurrent,
 we take $I_0=(a,b)$ in  Proposition~\ref{prop:well-inside},
 and define accordingly the intervals $I_m$.
 Then $I_m\cup \bigcup_{i=1}^k V_i$ is nice for every $m$
 because $c_{k+1}\notin D(\bigcup_{i=1}^k V_i)$. Now it suffices to
 fix $m'$ such that $I_{m'}$ is uniformly nice and
 define $V_{k+1}=I_{m'}$ and $\tilde V_{k+1}=I_{m'+1}$. If
 $c_{k+1}$ is non-recurrent, then we find an
 interval $J\owns c_{k+1}$, similarly as we did in
 Claim~1, such that
 $\partial J\subset Z$,
 $\partial J\cap D((a,b)\cup \bigcup_{i=1}^k V_i)=\emptyset$, and
 $J$ is $1/2$-well inside $(a,b)$. Then $V_{k+1}=(a,b)$
 and $\tilde V_{k+1}=J$ are adequate to our purposes.

Let us finally show that with this choice,
 $\tilde V=\bigcup_{i=1}^{k+1} \tilde V_i$ is uniformly nice.
 Let $x\in\tilde V_n \cap D(\tilde V)$, say $\phi_{\tilde V}(x) \in \tilde
V_l$. Let $0 \leq s \leq t$ be the entry time of $x$ to $\tilde
V_l$ and the return time of $x$ to $\tilde V$, respectively, and
denote the return domain to $\tilde V$ and the entry domain to
$V_l$ containing $x$ by $J$ and $K$ respectively, so $x \in J
\subset K$.
 By Proposition~\ref{macroscopickoebe}, the pullback $H$ of
 $\tilde V_l$ along $f^{s}(x),\ldots, f^t(x)$ is well inside $V_l$.
 Note that $J$ is the pullback of $H$ along $x,\ldots,f^s(x)$ and $K$ is
 the pullback of $V_l$ along $x,\ldots,f^s(x)$. Hence $J$ is well
 inside $K$ by Corollary~\ref{cor:disjointness}. Since
 $\partial \tilde V_n\notin D(\bigcup_{i=1}^{k+1} V_i)$, $K$ is
 contained in $\tilde V_n$, so $J$ is well inside $\tilde V_n$.
 This proves that $\tilde V$ is uniformly nice.
\end{proof}

\begin{proof}[\textbf{Proof of Theorem~\ref{thm:lemma11}}]

Fix $\eps>0$. We define the sets $c\in U_c\subset V_c$ and $W_c$
as follows. If $c\in \Crit\setminus \Feig$, then $V_c$ is the
component of the set $V$ from Proposition~\ref{prop:caili}
containing $c$. Write $V_c=(a,b)$ and say, for instance, that
$(c,b)$ is the smallest component of $V_c\setminus\{c\}$. Define
the convex combinations
$$
v_0 := \frac{c +  \xi'_0\ b}{1+\xi_0'}
\quad \text{ and } \quad
v_i := \frac{v_{i-1} + \xi_0'\ b}{1+\xi_0'}, \quad i=1,2,3,
$$
with $\xi_0'=\xi_0(f)/2$ and $\xi_0(f)$ the constant from
Proposition~\ref{prop:caili}. Take $v' \in (v_0, v_1) \cap Z$
close to $v_0$. If $v' \notin D(V)$, then set $v = v'$; otherwise
let $v$ be the right endpoint of the return domain to $V$
containing $v'$, and by having chosen $v'$ close to $v_0$, we
obtain $v_0 < v' \leq v < v_1$. This gives $v\in ((v_0,v_1) \cap Z
) \setminus D(V)$, and, similarly, there is $w \in ((v_2,v_3) \cap
Z ) \setminus D(V)$. If $c \in D(V)$, let $u$ denote the left
endpoint of the return domain to $V$ containing $c$; if $c\notin
D(V)$, take $u_0=c-(v_0-c)=2c-v_0$ and find as before $u\in
(u_0,c)$ belonging to $Z\setminus D(V)$. Finally we define
$U_c=(u, w)$ and $W_c=(v, w )$.

For critical points of Feigenbaum type we rely on
Proposition~\ref{prop:feigenbaum}. More precisely, for every
Feigenbaum solenoidal set $S$, we find a minimal solenoidal cycle
$T=\cyc(\hat J)=\bigcup_{i=0}^{s-1} f^i(\hat J)$ containing $S$
whose constituting intervals are small enough
to ensure that $|V_c| < \eps$ for the components of the corresponding set $V$.
Then the intervals
$U_c$ are those from the shell $M=\bigcup_{i=0}^{s-1} M_i$ of $T$
containing points from $\Crit$ (that is, from $\Feig$). To define
the sets $V_c$ assume, after reordering, that $M_0=M_s$ is the
smallest of all intervals $M_i$. Since the intervals $f^i(\hat J)$
are well centered in the intervals $M_i$ by
Proposition~\ref{prop:feigenbaum}, there is an interval $G_s$ such
that $M_s$ is well inside $G_s$ and $G_s$ intersects no other
interval from the solenoidal cycle $T$ than $\hat J=f^s(\hat J)$.
Pulling back $G_s$ along $f(\hat J),\ldots, f^s(\hat J)$, we
construct similarly intervals $G_i$, $i=1,\ldots,s$, such that
$M_i$ is well inside $G_i$ and $G_i\cap T=f^i(\hat J)$. The
intervals $V_c$ are those intervals $G_i$ containing points from
$\Crit$.

To define the  sets $W_c$ we take a boundary point $q$ of $M_0$
with $f^s(q)=q$ (in fact the proof of
Proposition~\ref{prop:feigenbaum} shows that it is possible that
$f^{s/2}(q)=q$). For every $0\leq i<s$, let $u_i$ denote the
middle point between $f^i(q)$ and the endpoint of $f^i(\hat J)$
closest to $f^i(q)$. If $M_i$ contains a critical point, let $L_i$
be the interval with endpoints $f^i(q)$ and $u_i$; if not, let
$L_i=M_i$. Finally, let $L'_i\subset L_i$ be the largest interval
with endpoint $f^i(q)$ such that $f^j(L'_i) \subset L_{i+j}$ for
all $0 \leq j < s$ with indices taken $\bmod s$, and hence
$f^s|_{L'_i}$ is a diffeomorphism.
Now the intervals $W_c$ are those intervals $L'_i$ contained in
intervals $U_c=M_i$ intersecting $\Crit$.

We show that (i)-(iii) in Theorem~\ref{thm:lemma11} hold.
Clearly, the construction implies the existence of a number
$\xi=\xi(f)>0$, thus not depending on $\eps$, such that $U_c$ is
$\xi$-well inside $V_c$ for every $c\in \Crit$. Moreover, since
the endpoints of all sets $W_{c'},U_{c'},V_{c'}$, $c'\in
\Crit\setminus \Feig$, belong to $Z\setminus D(V)$, we can assume
that they do not belong to $D(V_c)$, $c\in \Feig$, either. If
$c'\in \Feig$, then the niceness and invariance of shells still
guarantees $\partial U_{c'}\cap D(U_c)=\emptyset$ for every $c\in
\Crit$ and $\partial U_{c'}\cap D(V_c)=\emptyset$ for every $c\in
\Crit$ not belonging to the same solenoidal set as $c'$. In
particular, $U=\bigcup_{c\in \Crit} U_c$ is nice. This proves
Theorem~\ref{thm:lemma11}(i).

Now let $J$ be an entry domain to $U$, say $\phi|_J=f^j|_J$ and
$\phi(J)=U_c$. Then $f^j|_J$ is a diffeomorphism. We want to show
that $f^j|_J$ extends to a diffeomorphism $f^j|_K:K\to V_c$.
Assume by contradiction that this is not the case.
Then there is $K'\supset J$ such that
$f^j|_{K'}$ is a diffeomorphism, one of the endpoints $a$ of $K'$
satisfies $c'=f^n(a)\in \Crit$ for some $1\leq n<j$, and
$f^j(\{a\}\cup K')\subset V_c$.

Let $b\in K'$ such that $f^n(b)\in
\partial U_{c'}$ (here we use that $f^n(J)$ does not intersect $U$). We can
assume that both $c$ and $c'$ belong to the same Feigenbaum
solenoidal set $S$, because otherwise $\partial U_{c'}\cap
D(V_c)=\emptyset$, which contradicts $f^j(b)\in V_c$. Let
$T=\cyc(\hat J)$ be the minimal solenoidal cycle for $S$ we used
earlier to construct the sets $U_c,V_c$. Then there is $i \in \N$
such that $f^j(J)=U_c\supset f^i(\hat J)$. Since
$f^j(a)=f^{j-n}(c')\in S$, there is $i' \in \N$ such that
$f^j(a)\in f^{i'}(\hat J)$. But $f^j|_{\{a\}\cup K'}$ is a
homeomorphism, so $f^i(\hat J)$ and $f^{i'}(\hat J)$ are
different. This is impossible, because by its definition $V_c$
intersects exactly one interval from $T$.

We have shown that if $J$ is an entry domain to $U$, then
$f^j|_J:J\to U_c$ extends to a diffeomorphism  $f^j|_K:K\to V_c$.
Since $U_c$ is well inside $V_c$, there is $\kappa=\kappa(f)$ such
that $f^j|_J$ has distortion bounded by $\kappa$ by the $C^2$
Koebe Principle (Proposition~\ref{macroscopickoebe}).
This finishes the proof of
Theorem~\ref{thm:lemma11}(ii).

It remains to prove Theorem~\ref{thm:lemma11}(iii). If $c\in
\Crit\setminus \Feig$, then the definition of $W_c$ easily implies
that it is not too small compared to $U_c$. On the other hand, it
is not too large compared to the subinterval of $U_c$ between $c$
and $W_c$, so $f|_{W_c}$ has bounded distortion because $c$ is
non-flat. Since $\partial W_c\cap D(U)=\emptyset$,
Theorem~\ref{thm:lemma11}(iii) holds in this case.

Assume now that $c\in \Feig$ and $U_c$ is one of the above
intervals $M_{i_1}$, with $W_c=L'_{i_1}$. Put $k_c=s$ and let
$i_1<i_2<\cdots <i_t < i_1+s$ be the indices $i$ such that
$M_i$ contains some critical point. As shown in the proof of
Proposition~\ref{prop:feigenbaum}, each map
$f^{i_{r+1}-i_r-1}|_{M_{i_r+1}}$ has bounded distortion. Also,
each map $f|_{L_{i_r}}$ has bounded distortion because $|L_{i_r}|$
is less than half the distance from $f^{i_r}(q)$ to the critical
point in $M_{i_r}$ and non-flatness applies. Hence
$f^{k_c}|_{W_c}=f^s|_{L'_{i_1}}$ has bounded distortion. Recall
that all periodic orbits in the intervals $M_i$ are repelling, and
the same is true for $\orb(q)$. Then $f^{k_c}(W_c)\supset W_c$.

Finally, let us show that $W_c$ is not too small compared to
$U_c$. From its definition, $L_{i_1}$ is not too small compared to
$U_c$.
Let $A_j$ denote the largest interval with endpoint $f^j(q)$
such that $f^k(A_j) \subset L_{j+k}$ for all $0 \leq k \leq i_j-i_{j-1}$;
hence $A_1=L_{i_1}$ and $A_t=W_c$.
We claim that $A_{j+1}$ is not too small
compared with $A_j$. Indeed, $L_{i_{j+1}}$ is not
too small compared to $M_{i_{j+1}}$, so \emph{a fortiori} is not
too small compared to $f^{i_{j+1}-i_1}(A_j)$. Since
$f^{i_{j+1}-i_1}|_{A_j}$ has bounded distortion and
$A_{j+1}=A_j\cap f^{-(i_{j+1}-i_1)}(L_{i_{j+1}})$, $A_{j+1}$
cannot be too small compared to $A_j$ either.

This concludes the proof of Theorem~\ref{thm:lemma11}.
\end{proof}

\subsection{A \boldmath $C^3$ \unboldmath Koebe Distortion Lemma}

\begin{lemma} \label{lem:boundedorder}
 Let $f\in \CIIInf(I)$. Then for any $\xi>0$
 and $k\geq 0$, there is $\xi'=\xi'(\xi,k,f)>0$  such that the following statement
 holds: Let $(H_i)_{i=0}^l\subset
 (G_i)_{i=0}^l$ be chains such that $(G_i)_{i=0}^l$ has order at most $k$ and $G_l$
 is a small nice interval close enough to $\Crit$. If $H_l$ is $\xi$-well inside
 $G_l$, then $H_0$ is $\xi'$-well inside $G_0$. Moreover, if $k=0$, then there
 is $\kappa = \kappa(\xi, f)>0$ such that $f^l|_{H_0}$ has distortion bounded
 by $\kappa$.
\end{lemma}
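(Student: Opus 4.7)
My plan is to mirror the inductive scheme used in the proof of Corollary~\ref{cor:disjointness}, proceeding by induction on the order $k$ of the chain. The only serious change concerns the base case $k=0$: in the $C^2$ setting the distortion factor produced by Proposition~\ref{prop:koebe} is $\exp(Q(\max_i|f^i(G)|)\sum_i|f^i(H)|)\cdot((1+\xi)/\xi)^2$, which is tame only when the $H_i$ are disjoint so that $\sum_i|f^i(H)|\le 1$; in the $C^3$ setting we instead intend to invoke a Koebe distortion lemma in which the sum $\sum_i|f^i(H)|$ disappears entirely. Concretely, for $f\in\CIIInf(I)$ one has a classical cross-ratio/Koebe estimate (essentially the $C^3$ version of Proposition~\ref{prop:koebe}, cf.\ \cite[Chapter~IV]{dMvS} and the $C^3$ variant of \cite[Proposition~2]{vSV}) asserting the following: if $f^l|_{G_0}$ is a diffeomorphism with $G_l$ small and close to $\Crit$, and $f^l(H_0)$ is $\xi$-well inside $G_l$, then there are $\xi'=\xi'(\xi,f)>0$ and $\kappa=\kappa(\xi,f)>0$ such that $H_0$ is $\xi'$-well inside $G_0$ and $f^l|_{H_0}$ has distortion bounded by $\kappa$, with both bounds independent of $l$ and of whether the $H_i$ are disjoint. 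This is the base case $k=0$.

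Granted the base case, the inductive step goes through exactly as in Corollary~\ref{cor:disjointness}. For $k\ge 1$, assume the lemma is established for chains of order at most $k-1$ and let $t$ be the largest index with $G_t\cap\Crit\ne\emptyset$. I would split $(H_i)_{i=0}^l\subset (G_i)_{i=0}^l$ into three subchains: a top piece $(H_i)_{i=t+1}^l\subset (G_i)_{i=t+1}^l$ of order $0$, a single critical step $(H_i)_{i=t}^{t+1}\subset (G_i)_{i=t}^{t+1}$, and a bottom piece $(H_i)_{i=0}^t\subset (G_i)_{i=0}^t$ of order at most $k-1$. Apply the base case to the top piece to produce $\xi_1=\xi_1(\xi,f)$ with $H_{t+1}$ being $\xi_1$-well inside $G_{t+1}$; apply Lemma~\ref{lem:regularity} to the critical step to produce $\xi_2=\xi_2(\xi_1,f)$ with $H_t$ being $\xi_2$-well inside $G_t$; then apply the inductive hypothesis to the bottom piece to obtain $\xi'=\xi'(\xi_2,k,f)$ with $H_0$ being $\xi'$-well inside $G_0$. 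The hypothesis that $G_l$ is small and close to $\Crit$ is used via Remark~\ref{rem:regular} to guarantee that each of the intermediate intervals is small enough for the $C^3$ Koebe bound to apply. The distortion assertion when $k=0$ is exactly what the base case delivers.

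The main obstacle is the base case: I have to be sure that the $C^3$ hypothesis on $f$ does produce a distortion bound independent of the disjointness of the $H_i$ (equivalently, independent of $\sum_i|f^i(H)|$). The reason this is available in $C^3$ but not in $C^2$ is that the distortion error per iterate is governed by a cross-ratio estimate whose leading correction is a quadratic expression in $Df$ controlled by the Schwarzian derivative, and when one accumulates this correction along the pullback of $H_l$, the sum telescopes against a single length $|G_l|$ rather than against $\sum_i|f^i(H)|$; this is precisely the phenomenon already visible in the negative-Schwarzian version of Proposition~\ref{prop:koebenegative}, whose bound $((1+\xi)/\xi)^2$ has no dependence on $l$. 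Once the base case is set up in this way, the rest of the argument is purely combinatorial and identical in spirit to the proof of Corollary~\ref{cor:disjointness}.
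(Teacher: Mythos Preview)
Your inductive step is fine in spirit, but the base case has a genuine gap. You assert the existence of a ``$C^3$ Koebe lemma'' whose distortion bound is independent of both $l$ and the disjointness of the $H_i$, and justify it by a vague remark that ``the sum telescopes against a single length $|G_l|$''. No such blanket statement is available: the standard $C^3$ (or $C^{1+\mathrm{Zygmund}}$) cross-ratio estimates in \cite[Chapter~IV]{dMvS} bound the distortion by $\exp\bigl(C\sum_i |G_i|^2\bigr)$, which is uncontrolled when the $G_i$ are not disjoint and $l$ is large. You also never use the hypothesis that $G_l$ is \emph{nice}, which should be a signal that something is missing.

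The paper's argument exploits both the $C^3$ hypothesis and the niceness of $G_l$ in an essential way. Niceness is used to split the order-$0$ chain at the last index $t$ with $G_t\subset G_l$: the tail $(G_i)_{i=t+1}^l$ consists of pairwise disjoint intervals (because $G_l$ is nice), so the $C^2$ Koebe Principle (Proposition~\ref{prop:koebe}) applies there. For the head $(G_i)_{i=0}^{t+1}$, the paper invokes Kozlovski's observation (via \cite[Proposition~3]{vSV} and Theorem~\ref{thm:lemma11}): because $f^t(G_0)\subset U$ and the components of $U$ are small enough, the iterate $f^{t+1}|_{G_0}$ actually has \emph{negative Schwarzian derivative}, so Proposition~\ref{prop:koebenegative} gives the required bound with no sum at all. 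This is the real mechanism by which $C^3$ eliminates the dependence on $\sum_i|f^i(H)|$; it is not a telescoping identity but a reduction to the negative Schwarzian case. A secondary issue: in your inductive step you apply the inductive hypothesis to the bottom piece ending at $G_t$, but $G_t$ need not itself be nice; the paper handles this by arranging that every $G_i$ meeting $\Crit$ lies inside a component of the nice set $U$.
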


A slightly weaker version of this lemma (requiring that the
intervals $G_i$ are not too close to parabolic periodic points)
appears in \cite{LS}, who in turns refer to
\cite[Theorem~C(2)]{vSV}. Our proof is based on
\cite[Proposition~3]{vSV} and Theorem~\ref{thm:lemma11}.

\begin{proof}[\textbf{Proof of Lemma~\ref{lem:boundedorder}}]
Assuming that the components of $U$ in Theorem~\ref{thm:lemma11}
are sufficiently small, we can conclude that iterates of $f$ one
beyond those mapping into $U$ have negative Schwarzian derivative.
The idea behind obtaining negative Schwarzian derivative goes back
to Kozlovski \cite{Koz}, see also \cite{GSS}, and the precise
statement is as follows.
\begin{quote}
Let $f\in \CIIInf(I)$ and $U$ be as in Theorem~\ref{thm:lemma11}.
If the components $U_c$ of $U$ are sufficiently small, and $x$ is
such that $f^n(x)\in U$ and $f^i(x)\notin \Crit$ for every $0\leq
i\leq n$, then the Schwarzian derivative of $f^{n+1}$ at $x$ is
negative.
\end{quote}
``Sufficiently small'' in this statement should be interpreted as
that for each component $U_c$ of $U$ and each $i \geq 0$, each
component of $f^{-i}(U_c)$ has length $\leq \tau$, where
$\tau=\tau(f)$ is taken from \cite[Proposition~3]{vSV} (choosing,
with the notation in \cite[Proposition~3]{vSV}, $S=1$, $N=0$ and
$\delta$ the number $\xi_0=\xi(f)$ from
Theorem~\ref{thm:lemma11}). If $U_c$ is sufficiently small, then
this holds by Proposition~\ref{prop:contraction}. To prove the
statement, let $(J_i)_{i=0}^n$ be the pullback chain of $U_c$
along $x, f(x), \dots, f^n(x) \in U_c$. Let $t_1 < t_2 < \dots <
t_m$ be the iterates such that $f^{t_j}(x) \in U$, and hence the
intervals $J_i$, $t_j+1 \leq i \leq t_{j+1}$ are pairwise disjoint
and $\sum_{i=t_j+1}^{t_{j+1}} |J_i| \leq 1$. Fix $j>1$ for the
moment. Then $J_{t_j+1}$ is contained in an entry domain $J$ to
$U$, say $f^{t_{j+1}-t_j-1}(J)=U_{c'}$. By
Theorem~\ref{thm:lemma11}(ii), there is $K\supset J$ such that
$f^{t_{j+1}-t_j-1}$ maps diffeomorphically $K$ onto $V_{c'}$,
hence the pullback chain of $V_{c'}$ along $J_{t_j+1}, \ldots,
J_{t_{j+1}}$ has order $0$. Moreover, $J_{t_{j+1}}$ is contained
in $U_{c'}$, so it is $\xi_0$-well inside $V_{c'}$.
Therefore \cite[Proposition~3]{vSV}
implies that $f^{t_{j+1} - t_j}$ has negative Schwarzian
derivative at $f^{t_j+1}(x)$. For $j = 1$ and $t_1 > 0$ (so $x
\notin U$), $f^{t_1+1}$ has negative Schwarzian derivative at $x$
by the same reasoning. If $t_1 = 0$ (so $x \in U$), then $f$ has
negative Schwarzian derivative at $x$ by non-flatness. Since
compositions of maps with negative Schwarzian derivatives have
negative Schwarzian derivative, the statement follows.

Now we continue with the proof of the lemma. Assume first $k=0$.
Fix $\xi>0$ and let $(H_i)_{i=0}^l\subset (G_i)_{i=0}^l$ be chains
such that $(G_i)_{i=0}^l$ has order $0$ and $G_l$ is contained in
a component $U_c$ of the set $U$ above. If $G_0$ is an entry
domain to $G_l$, then the statement is just
Proposition~\ref{prop:koebe} because the intervals $G_i$ are
pairwise disjoint ($G_l$ is nice). If not, again due to the
niceness of $G_l$, there is an interval $G_t\subset G_l$ such that
the intervals $G_{t+1},\ldots, G_l$ are pairwise disjoint, so we
can apply Proposition~\ref{prop:koebe} to the subchains
$(H_i)_{i=t+1}^l\subset (G_i)_{i=t+1}^l$. Also, $f^{t+1}|_{G_0}$
has negative Schwarzian derivative by the above statement, so we
can apply Proposition~\ref{prop:koebenegative} to the subchains
$(H_i)_{i=0}^{t+1}\subset (G_i)_{i=0}^{t+1}$.

The general case $k>0$ follows easily from this one (take also
Lemma~\ref{lem:regularity} into account). Now we need $G_l$ to be
small enough so that every component of every preimage of $G_l$ is
contained in $U$ whenever it intersects $\Crit$ (again by
Proposition~\ref{prop:contraction}).
\end{proof}

\subsection{Proof of Theorem~\ref{thm:induced}}

Let $x\in I$ be such that $\orb(x)$ is disjoint from
$\partial I$.
For every $n \in \N$ and every $0 < \eps < d(f^n, \partial I)$ we
construct the pullback chain of $(f^n(x)-\eps,f^n(x)+\eps)$ along
$x,f(x),\ldots,f^n(x)$. We
define
$$
r_n^k(x) = \sup\{ \eps > 0 : \text{ order of the pullback chain of }
(f^n(x)-\eps,f^n(x)+\eps) \leq k\}.
$$
If
for every $k$ we have $r_n^k(x)\to 0$ as $n\to \infty$, then we
call $x$ a \emph{super-persistent} point. In
\cite[Theorem~2.7]{BlMi} (see also \cite{BlMi2}) it is shown that
for  $f\in \CIInf(I)$, the $\omega$-limit set of any
super-persistent recurrent point of $f$
 is minimal.

%

\begin{proof}[\textbf{Proof of Theorem~\ref{thm:induced}}]
By definition of $E$ and type (2) attractors,
$\lambda$-a.e.\ $x \in E$ has a dense orbit
in some cycle $\cyc(K)$ and therefore cannot be super-persistently
recurrent, nor map into $\partial I$.
We can then find a sequence $n_j\to \infty$ and $N_x \in \N$ and
$\delta_x > 0$ such that the pullback chain of
$(f^{n_j}(x)-\delta_x, f^{n_j}(x)+\delta_x)$ along $x,\ldots,
f^{n_j}(x)$ has order at most $N_x$. Clearly we can take
$\delta_{f(x)} \geq \delta_x$ and $N_{f(x)} \leq N_x$ (in fact, we
have equality for large $j$ whenever $x$ is not a critical point).
Since Lebesgue measure has only finitely many ergodic components
\cite[Theorem~2]{Lypre}, it follows that there are a single
$\delta > 0$ and $N \in \N$ such that $\delta_x \geq \delta$ and
$N_x \leq N$ for $\lambda$-a.e.\ $x \in E$.

Next choose $\eps > 0$ in  Theorem~\ref{thm:lemma11} so small that the intervals
$U_c\subset V_c$, $c\in \Crit'$, satisfy:
\begin{itemize}
 \item if $c$ is in the interior of the metric attractor
 $\cyc(K)$r, then $\dist(V_c,\partial\cyc(K))>2\delta$;
 moreover, if $d\in \Crit\setminus \Crit'$, then either $d\in
 \partial \cyc(K)$ or $\dist(d,\cyc(K))>\delta$;
 \item every component of every preimage of $V_c$, and every
 image $f^n(V_c)$, $n\leq 2r$ (with $r$ the period of $\cyc(K)$),
 has length less than $\delta$.
\end{itemize}
Recall that $U'=\bigcup_{c\in \Crit'}U_c$, $V'=\bigcup_{c\in
\Crit'}V_c$. From their definition, both sets are nice. Moreover,
$\partial U'\cap D(V')=\emptyset$ where as before $D(V') = \cup_{n
> 0} f^{-n}(V')$.

Since $\orb(x)$ is dense in $\cyc(K)$, we can assume that the
numbers $n_j$ are large enough so that $f^{n_j}(x)\in \cyc(K)$ for
every $j$. Let $J$ be the entry domain to $U$ (or the component of
$U$) containing $f^{n_j}(x)$, say $f^{m_j}(J)=U_c$. Two
possibilities arise. If $J\subset \cyc(K)$, then $c\in \Crit'$.
Moreover, if $K\supset J$ is such that $f^{m_j}$ maps $K$
diffeomorphically onto $V_c$, then $K\subset
(f^{n_j}(x)-\delta,f^{n_j}(x)+\delta)$, so the pullback chain of
$V_c$ along $x,\ldots, f^{n_j+m_j}(x)$ has order at most $N$. The
second possibility is that $J$ contains some point from $\partial
\cyc(K)$, call it $a$. It is not possible that $a$ belongs to a
periodic orbit contained in $\partial \cyc(K)$, because this would
imply that one of the points of this periodic orbit belongs to
$\Crit\setminus \Crit'$, which contradicts that $\cyc(K)$ contains
dense orbits. Hence there is $r_j\leq 2r$ (with $r$ the period of
$\cyc(K)$) such that $|f^{n_j-r_j}(x)-c'|<\delta$ for some $c'\in
\Crit'$. In fact, $f^{n_j-r_j}(x)\in U_{c'}$, because otherwise
$f^{n_j-r_j}(x)$ would belong to an entry domain $J'$ to $U$
contained in $\cyc(K)$, which leads to the contradiction
$f^{r_j}(J')=J\subset \cyc(K)$. Since $f^{r_j}(V_{c'})\subset
(f^{n_j}(x)-\delta,f^{n_j}(x)+\delta)$, the pullback chain of
$V_{c'}$ along $x,\ldots, f^{n_j-r_j}(x)$ has order at most $N$.

We have proved that there is a number $N=N(f)$ such that, for a.e
$x\in D$, there are $c\in \Crit'$ and a sequence $s_j\to \infty$
such that $f^{s_j}(x)\in U_c$ and the pullback of $V_c$ along
$x,\ldots, f^{s_j}(x)$ has order at most $N$. However, we need
critical order $0$, not $N$, for this theorem, so a further
argument is required.

Let $k_x \geq 1$ be the smallest integer for which there are $G_x
\supset H_x \owns x$ and $c \in \Crit'$ such that $f^{k_x}$ maps
$G_x$ diffeomorphically onto $V_c$ and $f^{k_x}(H_x) = U_c$. If
$x\notin U'$, $\omega(x)=\cyc(K)$ and the entry domain $H_x\owns
x$ to $U'$ is contained in $\cyc(K)$, then $k_x$ exists; it is the
first entry time $k_x = r_{U'}(x)$. But if $x\in U'$, then $k_x$
is more difficult to find.

{\bf Claim 1.} The set $B := \{ x \in E : k_x \text{ exists}\}$
has full Lebesgue measure in $E$.

Assume by contradiction that this claim fails, and that $x$ is a
density point of $E \setminus B$. We can find the sequence
$(s_j)_{j \in \N}$ and $c\in \Crit'$  as above, and let
$(G^j_i)_{i=0}^{s_j}$ and $(H^j_i)_{i=0}^{s_j}$ be the pullback
chains of $G^j_{s_j} = V_c$ and $H^j_{s_j} = U_c$ along $x, \dots,
f^{s_j}(x)$, respectively. The chains $(G^j_i)_{i=0}^{s_j}$ have
order $\leq N$.

Let $(W_t)_{t \in \N}$ be a denumeration of the return domains to
$V'$ within $V_c \setminus U_c$ and also take $W_0 := U_c$. Recall
that $\partial U_c\cap D(V')=\emptyset$, so $(W_t)_{t \geq 0}$ is
a family of pairwise disjoint intervals whose union has full
measure in $V'$. Since $f^{s_j}|_{G^j_0}$ has at most $N$ critical
values, we can arrange the enumeration of the $W_t$s such that no
critical value of $f^{s_j}|_{G^j_0}$ is contained in $W_t$ if
$t>N$. By Proposition~\ref{prop:caili} there is $\xi=\xi(f)$
such that each $W_t$ is $\xi$-well inside $V_c$. By
Lemma~\ref{lem:boundedorder}, there is $\zeta=\zeta(\xi,N,f)$ such
that each component of $G^j_0 \cap f^{-s_j}(\bigcup_{t=0}^{N}
W_t)$ is $\zeta$-well inside $G^j_0$. This holds in particular for
the at most $M \leq 2^{N+1}-1$ components of
$f^{-s_j}(\bigcup_{t=0}^N W_t)$ in $G^j_0$. Let us denote these
components as $Y_i$, numbered in order of decreasing size.

{\bf Claim 2.} There are $b = b(\zeta, N) > 0$ and $B^j \subset
G^j_0$ such that $\lambda(B^j) \geq b \lambda(G^j_0)$ and for
every $y \in B^j$, there are $k_y \geq s_j$, $G_y \supset H_y
\owns y$ and $c(y) \in \Crit'$ such that $f^{k_y}$ maps $G_y$
diffeomorphically onto $V_{c(y)}$ and $f^{k_y}(H_y) = U_{c(y)}$.

Take $b = \zeta^M /(M+1)!$. The components $Y_i$ are pairwise
disjoint, and have $\zeta$-collars around them in $G^j_0$, which
may intersect other components $Y_{i'}$ or their collars. We will
show that $\lambda(\bigcup_i Y_i) \leq (1-b)\lambda(G^j_0)$. If
$\lambda(Y_1) <\lambda(G^j_0)/(M + 1)$, then we get directly
$$\lambda(\bigcup_i Y_i)<\lambda(G^j_0) M /(M +
1)=(1-1/(M+1))\lambda(G^j_0)<(1-b)\lambda(G^j_0).$$

Thus we can assume $\lambda(Y_1) \geq \lambda(G^j_0)/(M +1)$ and
the $\zeta$-collar of $Y_1$ has two components of length $\geq
\zeta \lambda(G^j_0)/(M+1)$. If the second largest $Y_2$ satisfies
$Y_2 <\zeta \lambda(G^j_0)/((M+1)M)$, then there is a set  in the
$\zeta$-collar of $Y_1$ of measure $>\zeta
\lambda(G^j_0)/((M+1)M)$ which do not intersect $\bigcup_i Y_i$.
Hence
$$\lambda(\bigcup_i Y_i)<(1-\zeta/((M+1)M))\lambda(G^j_0)<(1-b)\lambda(G^j_0)
$$
again. Continuing this way, we find that at least one component
$Y_k$ has a $\zeta$-collar disjoint from $\bigcup_i Y_i$ and its
size is $\geq  \zeta^{M} \lambda(G^j_0)/(M+1)! = b
\lambda(G_0^j)$.

Now take $B^j:= E\cap(G^j_0\setminus \bigcup_i Y_i)$. Then clearly
$\lambda(B^j) \geq b \lambda(G^j_0)$, and if $y \in B^j$, we find
$k_y$ as follows. We have $z := f^{s_j}(y) \in W_t$ for some $t >
N$. There is $r_t$ such that $f^{r_t}$ maps $W_t$
diffeomorphically onto a component of $V'$. If $f^{r_t}(z) \in
U'$, then $k_y = s_j +r_t$. Otherwise $f^{r_t}(z) \in V' \setminus
U'$ and in fact $f^{r_t}(z)$ belong to another return domain to
$V'$. We continue iterating diffeomorphically until finally $z$
falls into $U'$, and we choose $k_y$ accordingly. This proves the
Claim 2. Obviously $B^j \subset B$, so $\lambda(B \cap G^j_0) \geq
b\lambda(G^j_0)$ independently of $j$. Since $\lambda(G^j_0) \to
0$ as $j \to \infty$, this contradicts that $x$ is a density point
of $I \setminus B$, proving Claim 1.

From Claim 1 the first part of Theorem~\ref{thm:induced} easily
follows: if $x,y\in B$ and $k_x\leq k_y$, then either $H_x\cap
H_y=\emptyset$ or $H_y\subset H_x$. Now an easy maximality
argument allows to redefine these sets if necessary so that either
$H_x\cap H_y=\emptyset$ or $H_y=H_x$.

It remains to prove the second part of Theorem~\ref{thm:induced}.
Since a.e. point from $U'$ belongs to $B$, $F:U'\to U'$ is well
defined a.e. It is important to note that if $F$ is defined on
$x$, then $H_x\subset G_x\subset U'$ because $\partial(U')\cap
D(V')\neq \emptyset$. Now it is immediate so show by induction on
$n$ that every branch $F^n|_H=f^j|_H:H\subset U\to U_c$ of $F^n$
admits a diffeomorphic extension to an interval $H\subset G\subset
U$ with $f^j(G)=V_c$. Then $F^n|_H$ has distortion bounded by a
constant $\kappa$ depending neither on $n$ nor on $H$ by
Lemma~\ref{lem:boundedorder}.
\end{proof}

\medskip
\noindent
Department of Mathematics\\
University of Surrey\\
Guildford, Surrey, GU2 7XH\\
UK\\
\texttt{h.bruin@surrey.ac.uk}\\
\texttt{http://personal.maths.surrey.ac.uk/st/H.Bruin/}

\medskip \noindent
Departamento de Matem\'aticas\\
Universidad de Murcia\\
Campus de Espinardo,
30100 Murcia \\
Spain \\
\texttt{vjimenez@um.es}\\
\texttt{http://www.um.es/docencia/vjimenez}\\

\end{document}